\newenvironment{enumeratei}{\begin{enumerate}[\upshape (i)]}%
{\end{enumerate}}
\newenvironment{enumerater}{\begin{enumerate}[\upshape (1)]}%
{\end{enumerate}}
\newcommand{\pup}[1]{\textup{(}{#1}\textup{)}}
\newcommand{\oo}[1]{\left]#1\right[}
\newcommand{\eqdef}{\underset{\mathrm{def}}{=}}
\newcommand{\cp}{\smallsetminus}
\newcommand{\cpt}{\mathbin{\widetilde{\smash{\smallsetminus}}}}
\DeclareMathOperator{\Idt}{Idt}
\newcommand{\Sim}[1]{\operatorname{Sim}({#1})}
\DeclareMathOperator{\Seq}{Seq}
\DeclareMathOperator{\Seqr}{Seq_{red}}
\DeclareMathOperator{\lh}{lh}
\DeclareMathOperator{\VR}{Vert}
\newcommand{\INF}[2]{{#1}^{\leqslant{#2}}}
\newcommand{\SUP}[2]{{#1}^{\geqslant{#2}}}
\newcommand{\Fm}[1]%
{\mathrm{F}_{\mathrm{mon}}({#1})}
\newcommand{\Fg}[1]%
{\mathrm{F}_{\!\mathrm{gp}}({#1})}
\newcommand{\Umn}{\operatorname{U_{mon}}}
\newcommand{\Um}[1]{\Umn({#1})}
\newcommand{\Ugn}{\operatorname{U_{gp}}}
\newcommand{\Ug}[1]{\Ugn({#1})}
\newcommand{\Catn}{\operatorname{Cat}}
\newcommand{\Cat}[1]{\Catn({#1})}
\newcommand{\Path}[1]{\operatorname{Path}({#1})}
\newcommand{\hgn}{\Pi_1}
\newcommand{\hg}[1]{\hgn({#1})}
\newcommand{\HGN}{\Upsilon^{\pm}}
\newcommand{\HG}[1]{\HGN({#1})}
\newcommand{\HM}[1]{\Upsilon({#1})}
\newcommand{\lcm}{\vee}
\newcommand{\lcmt}{\mathbin{\widetilde\lcm}}
\renewcommand{\gcd}{\wedge}
\newcommand{\gcdt}{\mathbin{\widetilde\wedge}}
\newcommand{\Gcd}{\bigwedge}
\newcommand{\Gcdt}{\mathbin{\widetilde\bigwedge}}
\newcommand{\Lcm}{\bigvee}
\newcommand{\Lcmt}{\mathbin{\widetilde\bigvee}}
\newcommand{\red}[1]{{#1}_{\mathrm{red}}}
\newcommand{\gf}{\varphi}
\newcommand{\gs}{\sigma}
\newcommand{\gO}{\Omega}
\newcommand{\gS}{\Sigma}
\newcommand{\js}{join-semi\-lat\-tice}
\newcommand{\ms}{meet-semi\-lat\-tice}
\newcommand{\one}{\mathbf{1}}
\newcommand{\ol}[1]{\overline{#1}}
\newcommand{\pI}[1]{\bigl({#1}\bigr)}
\newcommand{\set}[1]{\left\{#1\right\}}
\newcommand{\setm}[2]{\set{{#1}\mid{#2}}}
\newcommand{\vecm}[2]{\left({#1}\mid{#2}\right)}
\newcommand{\eps}{\varepsilon}
\newcommand{\es}{\varnothing}
\newcommand{\CAT}{\mathbf{Cat}}
\newcommand{\MON}{\mathbf{Mon}}
\newcommand{\GP}{\mathbf{Gp}}
\newcommand{\GPD}{\mathbf{Gpd}}
\newcommand{\PO}{\mathbf{Pos}}
\newcommand{\NN}{\mathbb{N}}
\newcommand{\ZZ}{\mathbb{Z}}
\newcommand{\conc}{\mathbin{\stackrel{\smallfrown}{}}}
\newcommand{\cC}{{\mathcal{C}}}
\numberwithin{equation}{section}
\newtheorem*{stat}{\name}
\newcommand{\name}{testing}
\newenvironment{all}[1]{\renewcommand{\name}{#1}\begin{stat}}
                        {\end{stat}}
\theoremstyle{plain}
\newtheorem{theorem}{Theorem}[section]
\newtheorem{proposition}[theorem]{Proposition}
\newtheorem{corollary}[theorem]{Corollary}
\newtheorem{lemma}[theorem]{Lemma}
\newtheorem{examplepf}[theorem]{Example}
\newtheorem*{sclaim}{Claim}
\theoremstyle{definition}
\newtheorem{definition}[theorem]{Definition}
\newtheorem{notation}[theorem]{Notation}
\newtheorem{example}[theorem]{Example}
\theoremstyle{remark}
\newtheorem{remark}[theorem]{Remark}
\newcommand{\qedc}{{\qed}~{\rm Claim~{\theclaim}.}}
\newcommand{\qedsc}{{\qed}~{\rm Claim.}}
\newenvironment{scproof}
{\begin{proof}[Proof of Claim.]}
{\qedsc\renewcommand{\qed}{}\end{proof}}
\numberwithin{figure}{section}
\numberwithin{table}{section}
\newcommand{\ba}{\boldsymbol{a}}
\newcommand{\bb}{\boldsymbol{b}}
\newcommand{\bc}{\boldsymbol{c}}
\newcommand{\bu}{\boldsymbol{u}}
\newcommand{\bv}{\boldsymbol{v}}
\newcommand{\bx}{\boldsymbol{x}}
\newcommand{\by}{\boldsymbol{y}}
\newcommand{\bz}{\boldsymbol{z}}
\newcommand{\sr}[1]{\partial_0{#1}}
\newcommand{\tg}[1]{\partial_1{#1}}
\newcommand{\SRn}{\nabla_{\!0}}
\newcommand{\TGn}{\nabla_{\!1}}
\newcommand{\SR}[1]{\SRn{#1}}
\newcommand{\TG}[1]{\TGn{#1}}
\newcommand{\dive}{\leqslant}
\newcommand{\ndive}{\not\leqslant}
\newcommand{\divet}{\mathbin{\widetilde{\leqslant}}}
\newcommand{\ndivet}{\mathbin{\widetilde{\not\leqslant}}}
\title{Gcd-monoids arising from homotopy groupoids}
\author[F. Wehrung]{Friedrich Wehrung}
\address{LMNO, CNRS UMR 6139\\
D\'epartement de Math\'ematiques\\
Universit\'e de Caen Normandie\\
14032 Caen cedex\\
France}
\email{friedrich.wehrung01@unicaen.fr}
\urladdr{https://wehrungf.users.lmno.cnrs.fr}
\subjclass[2010]{06F05; 18B35; 18B40; 20L05; 55Q05}
\keywords{Monoid; group; groupoid; category; conical; cancellative; gcd-monoid; simplicial complex; chain complex; barycentric subdivision; universal monoid; universal group; interval monoid; homotopy groupoid; spindle; highlighting expansion}
\date{\today}
\begin{document}

\begin{abstract}
The \emph{interval monoid}~$\HM{P}$ of a poset~$P$ is defined by generators $[x,y]$, where $x\leq y$ in~$P$, and relations $[x,x]=1$, $[x,z]=[x,y]\cdot[y,z]$ for $x\leq y\leq z$.
It embeds into its universal group~$\HG{P}$, the \emph{interval group} of~$P$, which is also the universal group of the homotopy groupoid of the chain complex of~$P$.
We prove the following results:

\begin{itemize}
\item The monoid~$\HM{P}$ has finite left and right greatest common divisors of pairs (we say that it is a \emph{gcd-monoid}) if{f} every principal ideal (resp., filter) of~$P$ is a \js\ (resp., a \ms).

\item For every group~$G$, there is a connected poset~$P$ of height~$2$ such that~$\HM{P}$ is a gcd-monoid and~$G$ is a free factor of~$\HG{P}$ by a free group.
Moreover, $P$ can be taken to be finite if{f}~$G$ is finitely presented.

\item For every finite poset~$P$, the monoid~$\HM{P}$ can be embedded into a free monoid.

\item Some of the results above, and many related ones, can be extended from interval monoids to the universal monoid~$\Um{S}$ of any category~$S$.
This enables us, in particular, to characterize the embeddability of~$\Um{S}$ into a group, by stating that it holds at the hom-set level.
We thus obtain new easily verified sufficient conditions for embeddability of a monoid into a group.
\end{itemize}
We illustrate our results by various examples and counterexamples.
\end{abstract}

\maketitle

\section{Introduction}\label{S:Intro}
\subsection{Background and highlights}
\label{Su:BackHigh}
The results of the present paper are, basically, easy.
Nonetheless, the purpose they fill is non-trivial, as they lay the foundation for the study of a class of monoids illustrating (cf. Dehornoy and Wehrung~\cite{DW1}) the limitations of an approach, initiated in Dehornoy~\cite{Dis}, of the word problem in certain groups.
The class of groups in question includes Artin-Tits groups, and, more generally, universal groups of cancellative monoids with left and right greatest common divisors --- in short \emph{gcd-monoids} (cf. Dehornoy~\cite{Dis}).

Those monoids, together with the following construct, are our main object of study.
The \emph{interval monoid}~$\HM{P}$ of a poset (i.e., partially ordered set)~$P$ is the universal monoid of the category associated with~$P$ in the usual manner, and its universal group~$\HG{P}$ is also the universal group of the homotopy groupoid of the chain complex of~$P$.
Accordingly, we will call~$\HG{P}$ the \emph{floating homotopy group} of~$P$ (cf. Definition~\ref{D:FHG}).

The monoids~$\HM{P}$ are fairly special objects:
for example, $\HM{P}$ always embeds into its universal group~$\HG{P}$ (cf. Proposition~\ref{P:rGPintoGrp}), and further, if~$P$ is finite, then~$\HM{P}$ always embeds into a free monoid (Proposition~\ref{P:rGpLinExt}).
Nevertheless, every group is a free factor, by a free group, of the universal group of a gcd-monoid of the form~$\HM{P}$ (see Theorem~\ref{T:VG2rGL2} below).

The concept of a gcd-monoid, which is implicit in the approach
of braids and Artin-Tits monoids by Garside~\cite{Gars1969} and Brieskorn-Saito~\cite{BriSai1972},
and underlies all that is now called Garside theory (cf. Dehornoy~\cite{DDGKM}), was introduced as such in Dehornoy~\cite{Dis}.

The present paper sets the foundation of the study of the interaction between groups, gcd-monoids, and interval monoids.
Three highlights are the following.

\begin{all}{Theorem \ref{T:UmSgcdmon}}
The universal monoid of a category~$S$ is a gcd-monoid if{f}~$S$ is cancellative, has no non-trivial left invertibles, and has left gcds \pup{resp., right gcds} of any pair of elements with the same source \pup{resp., target}.
\end{all}

In particular, for a poset~$P$, the condition that~$\HM{P}$ be a gcd-monoid can be easily read on~$P$, by saying that every principal ideal is a \js\ and every principal filter is a \ms\ (Proposition~\ref{P:rGpPgcd}).
This condition holds if~$P$ is the barycentric subdivision of a simplicial complex (Corollary~\ref{C:rGpPgcd}).

\begin{all}{Theorem~\ref{T:VG2rGL2}}
Every group~$G$ is a free factor, by a free group, of the universal group~$\HG{P}$ of~$\HM{P}$, for some poset~$P$ of height~$2$ such that~$\HM{P}$ is a gcd-monoid.
Furthermore, $P$ can be taken to be finite if{f}~$G$ is finitely presented.
\end{all}

\begin{all}{Theorem \ref{T:EmbUmS2Grp}}
The universal monoid of a category~$S$ embeds into a group if{f} there is a functor, from~$S$ to a group, which separates every hom-set of~$S$.
\end{all}

\subsection{Section by section summary of the paper}
\label{Su:SectSumm}

We start by recalling, in \textbf{Section~\ref{S:Basic}}, the basic concepts and notation underlying the paper.

For any category~$S$, described in an ``arrow-only'' fashion \emph{via} its partial semigroup of arrows, the \emph{universal monoid}~$\Um{S}$ of~$S$ is the initial object in the category of all functors from~$S$ to some monoid.
The \emph{universal group}~$\Ug{S}$ of~$S$ is defined similarly for groups.
In \textbf{Section~\ref{S:UnivMon}}, we present an overview of those concepts, essentially originating in Higgins~\cite{Higg1971}.

Say that a category is \emph{conical} if it has no non-trivial left (equivalently, right) invertibles.
The main goal of \textbf{Section~\ref{S:UmScanc}} is to prove that a category~$S$ is conical (resp., cancellative) if{f} the monoid~$\Um{S}$ is conical (resp., cancellative).

In \textbf{Section~\ref{S:divetgcd}}, we describe the left and right divisibility preorderings on~$\Um{S}$ and we prove that~$\Um{S}$ is a gcd-monoid if{f}~$S$ has left (resp., right) gcds of pairs with the same source (resp., target).

In \textbf{Section~\ref{S:FHG}}, we study the ``floating homotopy group~$\HG{K}$'' of a simplicial complex~$K$, observing in particular that it is the universal group of the homotopy groupoid~$\hg{K}$ of~$K$.
We observe in Proposition~\ref{P:VG2rGL} that for a spanning tree with set of edges~$E$ in a connected simplicial complex~$K$, $\HG{K}$ is the free product of the fundamental group of~$K$ and the free group on~$E$.

The interval monoid~$\HM{P}$, of a poset~$P$, is defined, in \textbf{Section~\ref{S:IntMon}}, as the universal monoid of the category~$\Cat{P}$ canonically associated to~$P$ (i.e., there is exactly one arrow from~$x$ to~$y$ if{f} $x\leq y$, no arrow otherwise).
We verify that the universal group~$\HG{P}$ of~$\HM{P}$ is identical to the floating homotopy group~$\HG{\Sim{P}}$ of the chain complex~$\Sim{P}$ of~$P$, thus showing that there is no ambiguity on the notation~$\HG{P}$.
We also verify that the canonical map from~$\HM{P}$ to~$\HG{P}$ is one-to-one --- of course this result does not extend to arbitrary categories.
We characterize those posets~$P$ such that~$\HM{P}$ is a gcd-monoid, and we show that this condition always holds if~$P$ is the barycentric subdivision of a simplicial complex.

In \textbf{Section~\ref{S:SubMonFM}} we verify that~$\HM{P}$ always embeds into a free monoid (not only a free group) if~$P$ is finite, but that nonetheless, we construct a submonoid of the free monoid on four generators, which is a gcd-monoid, but not the interval monoid of any poset.

In \textbf{Section~\ref{S:Spindle}}, we introduce a class of monoids, denoted by $\HM{P,u,v}$, where $[u,v]$ is a so-called \emph{extreme spindle} of a poset~$P$.
The monoids $\HM{P,u,v}$ are also constructed as universal monoids of categories, and they are used as counterexamples in the follow-up paper Dehornoy and Wehrung~\cite{DW1}.
A presentation of $\HM{P,u,v}$ consists of a certain subset of the natural presentation of~$\HM{P}$ (cf. Proposition~\ref{P:SpdMonPres}), and if~$\HM{P}$ is a gcd-monoid, then so is $\HM{P,u,v}$ (cf. Proposition~\ref{P:SpindleCat}).

In \textbf{Section~\ref{S:Pos2UMG}} we show that for any category~$S$, the universal monoid~$\Um{S}$ can be embedded into a group if{f} the embedding can be realized at the hom-set level.
We illustrate that result on an example, which we denote by~$C_6$ (Example~\ref{Ex:EmbGrp1}).
While neither Adjan's condition nor Dehornoy's $3$-Ore condition are \emph{a priori} applicable to prove the embeddability of~$C_6$ into a group, this example is the universal monoid of a category, so the above-mentioned criterion easily applies.

\section{Notation and terminology}
\label{S:Basic}

The present paper will involve \emph{small categories}, viewed in an ``arrow-only'' fashion as partial semigroups with identities subjected to certain axioms (cf. Definition~\ref{D:Semicat}), as well as categories viewed in the usual object / arrow way, which are then usually proper classes.
In order to highlight the distinction without dragging along the qualifier ``small'' through the paper, we will denote the first kind (small categories, usually arrow-only) as ``categories'', and the second kind (object / arrow categories) as ``Categories''.
Similar conventions will apply to groupoids versus Groupoids (categories where every arrow is an isomorphism) and functors versus Functors (morphisms of categories).

The following Categories will be of special importance:
\begin{itemize}
\item $\CAT$, the Category of all categories (the morphisms are the functors),

\item $\GPD$, the Category of all groupoids (the morphisms are the functors),

\item $\MON$, the Category of all monoids (the morphisms are the monoid homomorphisms),

\item $\GP$, the Category of all groups (the morphisms are the group homomorphisms),

\item $\PO$, the Category of all posets (i.e., partially ordered sets; the morphisms are the \emph{isotone maps}, that is, those maps $f\colon P\to Q$ such that $x\leq y$ implies $f(x)\leq f(y)$ whenever $x,y\in P$).
\end{itemize}

$\Seq X$ denotes the set of all (possibly empty) finite sequences of elements of any set~$X$.
Denote by~$\bx\conc\by$ the concatenation of finite sequences~$\bx$ and~$\by$.
For a finite sequence $\bx=(x_1,\dots,x_n)$, we set $n=\lh(\bx)$, the \emph{length of~$\bx$} (thus equal to~$0$ for the empty sequence).
We will often use the notational convention
 \begin{equation}\label{Eq:NotFinSeq}
 \bx=(x_1,\dots,x_n)\,,\quad
 \text{for }\bx\in\Seq X\,,\text{ where }n=\lh(\bx)\,.
 \end{equation}
As in Rotman \cite[Chapter~11]{Rotm1995}, a \emph{simplicial complex} (or \emph{abstract simplicial complex})~$K$ is a collection of nonempty finite subsets, called \emph{simplices}, of a set~$\VR K$, the \emph{vertices} of~$K$, such that every~$\set{v}$, where $v\in\VR K$, is a simplex, and every nonempty subset of a simplex is a simplex.
The \emph{$n$-skeleton} of~$K$, denoted by~$K^{(n)}$, is the set of all simplices with at most~$n+1$ elements (also called \emph{$n$-simplices}), for each $n\in\NN$.
We say that~$K$ \emph{has dimension at most~$n$} if $K=K^{(n)}$.
A simplicial complex~$K'$ is a \emph{subcomplex} of~$K$ if every simplex of~$K'$ is a simplex of~$K$.
A subcomplex~$K'$ of~$K$ is \emph{spanning} if it has the same vertices as~$K$.

A \emph{path}, in~$K$, is defined as a nonempty finite sequence $\bx=(x_0,\dots,x_n)$, where each $x_i\in\VR K$ and each $\set{x_i,x_{i+1}}\in K^{(1)}$.
Set $\sr{\bx}\eqdef x_0$ and $\tg{\bx}\eqdef x_n$, and say that~\emph{$\bx$ is a path from~$x_0$ to~$x_n$}.
The \emph{homotopy} relation, on the set $\Path{K}$ of all paths of~$K$, is the equivalence relation~$\simeq$ on $\Path{K}$ generated by all pairs
 \begin{align*}
 \bu\conc(x,x)\conc\bv&\simeq\bu\conc(x)\conc\bv\,,
 &&\text{where }\bu\conc(x)\conc\bv\text{ is a path},\\
 \bu\conc(x,y,z)\conc\bv&\simeq
 \bu\conc(x,z)\conc\bv\,,
 &&\text{where }\bu\conc(x,y,z)\conc\bv\text{ is a path}\\
 &&&\text{and }\set{x,y,z}\text{ is a simplex}\,.
 \end{align*}
Paths $\bx=\bu\conc(z)$ and $\by=(z)\conc\bv$ can be multiplied, by setting $\bx\by=\bu\conc(z)\conc\bv$ (\emph{not to be confused with the concatenation $\bx\conc\by$}), and this partial operation defines a category structure on $\Path{K}$, whose identities are the one-entry paths of~$K$.
We say that~$K$ is \emph{connected} if there is a path between any two vertices of~$K$.
Observe that $\bx\simeq\by$ implies that $\sr{\bx}=\sr{\by}$ and $\tg{\bx}=\tg{\by}$, for all paths~$\bx$ and~$\by$.
Denoting by $[\bx]=[x_0,\dots,x_n]$, or $[\bx]_K=[x_0,\dots,x_n]_K$ in case~$K$ needs to be specified, the homotopy class of a path $\bx=(x_0,\dots,x_n)$, homotopy classes can be multiplied, \emph{via} the rule $[\bx]\cdot[\by]=[\bx\by]$, which is defined if{f} $\tg{\bx}=\sr{\by}$.
The collection~$\hg{K}$ of all homotopy classes of paths of~$K$ is a \emph{groupoid}, called the \emph{edge-path groupoid}, or \emph{fundamental groupoid}, of~$K$.
The inverse $[\bx]^{-1}$ of a homotopy class~$[\bx]$, where $\bx=(x_0,\dots,x_n)$, is the homotopy class of $\bx^{-1}\eqdef(x_n,\dots,x_0)$.

For any poset~$P$, the \emph{chain complex} $\Sim{P}$ of~$P$ has vertices the elements of~$P$, and simplices the finite chains of~$P$.
An element~$x$ of~$P$ is a \emph{lower cover} of an element~$y$ of~$P$, in notation $x\prec y$, if $x<y$ and there is no~$z$ such that $x<z<y$.

For any simplicial complex~$K$, the \emph{barycentric subdivision} of~$K$ is a poset, defined as the set of all simplices of~$K$, partially ordered under set inclusion.

The \emph{height} of a poset~$P$ is defined as the supremum of the cardinalities of all chains of~$P$, minus one.

We say that a group~$G$ is a \emph{free factor} (resp., \emph{doubly free factor}) of a group~$H$, if there exists a group (resp., a free group)~$F$ such that $H\cong F* G$, where~$*$ denotes the free product (i.e., coproduct) in the Category~$\GP$ of all groups.
 
We denote by~$\Fm{X}$ (resp., $\Fg{X}$) the free monoid (resp., group) on a set~$X$.

We denote by $\NN=\set{0,1,2,\dots}$ the additive monoid of all nonnegative integers, and by~$\ZZ$ the additive group of all integers.

\section{The universal monoid of a category}\label{S:UnivMon}

The universal monoid construction, applied to a category~$S$, is a special case of a construction described in Chapters~8 to~10 of Higgins~\cite{Higg1971}.
Its underlying monoid is obtained by keeping all existing products in~$S$ and collapsing all the identities of~$S$.
Although most of the material in this section is contained in some form in Higgins~\cite{Higg1971}, we will write it in some detail, in order to be able to apply it to the category-to-monoid transfer results of Sections~\ref{S:UmScanc} and \ref{S:divetgcd}.

For a partial binary operation~$\cdot$ on a set~$S$, we will abbreviate the statement that~$x\cdot y$ is defined (resp., undefined) by writing~$x\cdot y\downarrow$ (resp., $x\cdot y\uparrow$), for any $x,y\in S$.
Hence, $x\cdot y\downarrow$ is equivalent to the statement $(\exists z)(z=x\cdot y)$.
This definition is extended the usual way to arbitrary \emph{terms}, of the language of semigroups, with parameters in a given semigroup~$S$: for example, for any $x,y,z\in S$, $(x\cdot y)\cdot z\downarrow$ holds if there are $u,v\in S$ such that $u=x\cdot y$ and $v=u\cdot z$; and $t\uparrow$ means that the term~$t$ is undefined (in~$S$).

\begin{definition}\label{D:Semicat}
A \emph{semicategory} is a structure $(S,\cdot)$, consisting of a set~$S$ endowed with a partial binary operation~$\cdot$ such that
$x\cdot(y\cdot z)\downarrow$ if{f} $(x\cdot y)\cdot z\downarrow$ if{f} $x\cdot y\downarrow$ and $y\cdot z\downarrow$, and then $x\cdot(y\cdot z)=(x\cdot y)\cdot z$ (sometimes denoted by~$x\cdot y\cdot z$, and usually by~$xyz$), for all $x,y,z\in S$.

An element~$e\in S$ is an \emph{identity} of~$S$ if it is idempotent (i.e., $e^2=e$) and $xe\downarrow$ implies that $xe=x$ and $ex\downarrow$ implies that $ex=x$, for all $x\in S$.
We denote by~$\Idt S$ the set of all identities of~$
S$.

A \emph{category} is a semicategory in which for every~$x$ (thought of as an arrow) there are (necessarily unique) identities~$a$ and~$b$ such that $x=ax=xb$.
We will write $a=\sr{x}$ (the \emph{source} of~$x$) and $b=\tg{x}$ (the \emph{target} of~$x$).
The hom-sets of~$S$ are then the
$S(a,b)=\setm{x\in S}{\sr{x}=a\text{ and }\tg{x}=b}$, for $a,b\in\Idt S$.

For categories~$S$ and~$T$, a map $f\colon S\to T$ is a \emph{functor} if $f(xy)=f(x)f(y)$ whenever $xy$ is defined, and~$f$ sends identities to identities.
In particular, if~$T$ is a \emph{monoid} (i.e., a category with exactly one identity, then denoted by~$1$), $f$ should send every identity to~$1$.
\end{definition}

Of course, if a category~$S$ is described, in the usual fashion, by its objects and morphisms, the identities of~$S$ are exactly the identity morphisms on its objects.
We will sometimes write $\sr{a}\stackrel{a}{\to}\tg{a}$, whenever $a\in S$.

\goodbreak

\begin{definition}\label{D:InvCancetc}
Let~$S$ be a category.
An element $a\in S$ is 
\begin{itemize}
\item\emph{right invertible} if there exists $x\in S$ (which is then called a \emph{right inverse} of~$a$) such that~$ax$ is an identity --- then of course, necessarily, $ax=\sr{a}$;

\item\emph{left invertible} if there exists $x\in S$ (which is then called a \emph{left inverse} of~$a$) such that~$xa$ is an identity --- then of course, necessarily, $xa=\tg{a}$;

\item\emph{invertible} if it is both left and right invertible;

\item\emph{left cancellable} (or \emph{monic}), if $ax=ay$ implies that $x=y$, for all $x,y\in S$;

\item\emph{right cancellable} (or \emph{epic}), if $xa=ya$ implies that $x=y$, for all $x,y\in S$;

\item\emph{cancellable} if it is both right and left cancellable.
\end{itemize}
The category~$S$ is \emph{left cancellative} (\emph{right cancellative}, \emph{cancellative}, respectively) if every element of~$S$ is left cancellable (right cancellable, cancellable, respectively).
A \emph{groupoid} is a category in which every element is invertible.
\end{definition}

{}From now on until Proposition~\ref{P:UgSEUniv} we shall fix a category~$S$.
For~$\ba,\bb\in\Seq S$, we say that~$\ba$ \emph{reduces to~$\bb$ in one step}, in notation $\ba\rightarrow\bb$, if there are $\bu,\bv\in\Seq S$ such that
either there is $e\in\Idt S$ such that $\ba=\bu\conc(e)\conc\bv$ and $\bb=\bu\conc\bv$, or there are $x,y\in S$ such that $xy\downarrow$\,, $\ba=\bu\conc(x,y)\conc\bv$, and $\bb=\bu\conc(xy)\conc\bv$.

Obviously, $\ba\rightarrow\bb$ implies that $\lh(\ba)=\lh(\bb)+1$.

We denote by~$\rightarrow^*$ the reflexive and transitive closure of~$\rightarrow$, and we say that~$\ba$ \emph{reduces to~$\bb$} if $\ba\rightarrow^*\bb$.
For all $\bx,\by\in\Seq S$, let $\bx\equiv\by$ hold if there is $\bz\in\Seq S$ such that $\bx\rightarrow^*\bz$ and $\by\rightarrow^*\bz$.

\goodbreak

\begin{lemma}\label{L:Confluent}
The following statements hold:
\begin{enumerater}
\item\label{arrconfl}
The union of the binary relation~$\rightarrow$, with the equality, is confluent: that is, whenever $\ba\rightarrow\bb_i$, for each $i\in\set{0,1}$, then either~$\bb_0=\bb_1$ or there exists $\bc\in\Seq S$ such that $\bb_0\rightarrow\bc$ and $\bb_1\rightarrow\bc$.

\item\label{arr*confl}
The binary relation~$\rightarrow^*$ is also confluent.

\item\label{arrcomp}
Both relations~$\rightarrow$ and~$\rightarrow^*$ are  compatible with concatenation, that is, $\bx\rightarrow\by$ \pup{resp., $\bx\rightarrow^*\by$} implies that $\bx\conc\bz\rightarrow\by\conc\bz$ and $\bz\conc\bx\rightarrow\bz\conc\by$ \pup{resp., $\bx\conc\bz\rightarrow^*\by\conc\bz$ and $\bz\conc\bx\rightarrow^*\bz\conc\by$}, for all $\bx,\by,\bz\in\Seq S$.

\item\label{EquivCong}
The binary relation~$\equiv$ is a monoid congruence on $(\Seq S,\conc)$.
\end{enumerater}
\end{lemma}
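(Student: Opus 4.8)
The plan is to prove the four items in the order \pup{3}, \pup{1}, \pup{2}, \pup{4}, since \pup{3} is self-contained, \pup{1} is the combinatorial heart, \pup{2} is a formal consequence of~\pup{1}, and \pup{4} combines \pup{2} with~\pup{3}. For \pup{3}, the observation is that a one-step reduction $\bx\rightarrow\by$ is witnessed by a factorization $\bx=\bu\conc\bb\conc\bv$, $\by=\bu\conc\bb'\conc\bv$, where either $\bb=(e)$ and $\bb'=()$ with $e\in\Idt S$, or $\bb=(x,y)$ and $\bb'=(xy)$ with $x,y\in S$ and $xy\downarrow$. Appending~$\bz$ on the right replaces the suffix~$\bv$ by~$\bv\conc\bz$, and prepending~$\bz$ replaces~$\bu$ by~$\bz\conc\bu$, so the same factorization witnesses $\bx\conc\bz\rightarrow\by\conc\bz$ and $\bz\conc\bx\rightarrow\bz\conc\by$; the statement for~$\rightarrow^*$ then follows by induction on the number of one-step reductions.

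The heart is \pup{1}, which I would prove by a case analysis on the positions of the two redexes. Each reduction $\ba\rightarrow\bb_i$ contracts a \emph{window}~$W_i$ of~$\ba$: a single entry in case of an identity deletion, two consecutive entries in case of a multiplication. If the position sets $W_0$ and $W_1$ are disjoint, the two contractions commute --- performing one leaves the other redex available, its entries, hence its side condition ($e\in\Idt S$, resp.\ $xy\downarrow$), being untouched --- and the two orders yield a common~$\bc$, whence $\bb_0\rightarrow\bc$ and $\bb_1\rightarrow\bc$. If $W_0$ and $W_1$ overlap, the remaining configurations are: two one-entry windows, which must then coincide, so $\bb_0=\bb_1$; two two-entry windows, which either coincide ($\bb_0=\bb_1$ again) or form a block $(x,y,z)$ with $xy\downarrow$ and $yz\downarrow$, in which case the semicategory axiom yields $(xy)z=x(yz)\downarrow$ and both $\bb_0$ and $\bb_1$ reduce in one step to the sequence in which that block is replaced by the single entry~$xyz$; and a one-entry window contained in a two-entry window, which forces a factor $(e,y)$ or $(x,e)$ with $e\in\Idt S$, where, since the multiplication redex requires $ey\downarrow$ (resp.\ $xe\downarrow$), the identity axiom gives $ey=y$ (resp.\ $xe=x$), so again $\bb_0=\bb_1$. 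In each case the required~$\bc$ has been exhibited.

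For \pup{2}, I would note that, by~\pup{1}, the relation $R\eqdef{\rightarrow}\cup\Delta_{\Seq S}$ has the diamond property: if~$\ba$ reduces to~$\bb_0$ and to~$\bb_1$ under~$R$, then some~$\bc$ satisfies $\bb_0\mathrel{R}\bc$ and $\bb_1\mathrel{R}\bc$ --- when one of the two $R$-steps is an equality, take~$\bc$ to be the other reduct; otherwise apply~\pup{1}. Since $R$ and $\rightarrow$ have the same reflexive transitive closure, and any relation with the diamond property has a confluent reflexive transitive closure --- a routine induction on the lengths of the two $R$-reduction sequences, or an appeal to Newman's Lemma, since $\rightarrow$ strictly decreases length and is therefore terminating --- it follows that $\rightarrow^*$ is confluent.

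For \pup{4}, since $(\Seq S,\conc)$ is a monoid, indeed the free monoid on the underlying set of~$S$, with the empty sequence as unit, it remains to check that $\equiv$ is an equivalence relation compatible with~$\conc$. Reflexivity and symmetry are immediate. Transitivity is where confluence is used: given $\bx\rightarrow^*\bp$, $\by\rightarrow^*\bp$, $\by\rightarrow^*\bq$, $\bz\rightarrow^*\bq$, apply the confluence of~$\rightarrow^*$ from~\pup{2} to $\by\rightarrow^*\bp$ and $\by\rightarrow^*\bq$ to get~$\br$ with $\bp\rightarrow^*\br$ and $\bq\rightarrow^*\br$, whence $\bx\rightarrow^*\br$ and $\bz\rightarrow^*\br$, i.e.\ $\bx\equiv\bz$; compatibility follows from~\pup{3}, since $\bx\rightarrow^*\bp$, $\bx'\rightarrow^*\bp$, $\by\rightarrow^*\bq$, $\by'\rightarrow^*\bq$ give $\bx\conc\by\rightarrow^*\bp\conc\by\rightarrow^*\bp\conc\bq$ and likewise $\bx'\conc\by'\rightarrow^*\bp\conc\bq$. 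The only step calling for genuine care is the overlap analysis in~\pup{1}: one must verify that the enumeration of configurations is exhaustive and that, in each one, the side conditions ($e\in\Idt S$, composability) carry over to the residual redex; the rest is formal bookkeeping.
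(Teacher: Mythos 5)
Your proposal is correct and follows essentially the same route as the paper: the paper's proof of (1) likewise reduces to a case analysis on overlapping redexes, singling out exactly your three non-trivial configurations ($(x,e)$ and $(e,x)$ collapsing to $\bb_0=\bb_1$ via the identity axiom, and $(x,y,z)$ resolved by associativity), and then derives (2) from (1) by the standard strong-confluence argument and (4) from (2) and (3). You have merely written out in full the cases the paper dismisses as "easy", and reordered the items, which changes nothing of substance.
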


\begin{proof}
\emph{Ad}~\eqref{arrconfl}.
The problem reduces to a small number of cases, all of which are easy, and the only three of which not being completely trivial (e.g., requiring both definitions of a semicategory and an identity) being the following:
 \begin{multline*}
 \ba=\bu\conc(x,e)\conc\bv\,,\ 
 \bb_0=\bu\conc(x)\conc\bv\,,\ 
 \bb_1=\bu\conc(xe)\conc\bv\,,\\
 \text{where }\bu,\bv\in\Seq S\,,\ 
 x\in S\,,\ e\in\Idt S\,,\text{ and }xe\downarrow\,,
 \end{multline*}
\begin{multline*}
 \ba=\bu\conc(e,x)\conc\bv\,,\ 
 \bb_0=\bu\conc(x)\conc\bv\,,\ 
 \bb_1=\bu\conc(ex)\conc\bv\,,\\
 \text{where }\bu,\bv\in\Seq S\,,\ 
 x\in S\,,\ e\in\Idt S\,,\text{ and }ex\downarrow\,,
 \end{multline*}
and
\begin{multline*}
 \ba=\bu\conc(x,y,z)\conc\bv\,,\ 
 \bb_0=\bu\conc(x,yz)\conc\bv\,,\ 
 \bb_1=\bu\conc(xy,z)\conc\bv\,,\\
 \text{where }\bu,\bv\in\Seq S\,,\ 
 x,y,z\in S\,,\ xy\downarrow\,,\text{ and }
 yz\downarrow\,.
 \end{multline*}
In the first two cases, $\bb_0=\bb_1$.
In the third case, take $\bc=\bu\conc(xyz)\conc\bv$.

It is well known that~\eqref{arr*confl} follows from~\eqref{arrconfl}.

\eqref{arrcomp} is straightforward, and~\eqref{EquivCong} is an easy consequence of~\eqref{arr*confl} and~\eqref{arrcomp}.
\end{proof}

{}From now on we shall set, following the convention set in~\eqref{Eq:NotFinSeq},
 \[
 \Seqr S\eqdef\setm{\bx\in\Seq(S\setminus\Idt S)}
 {x_ix_{i+1}\uparrow
 \text{ whenever }1\leq i<i+1\leq\lh(\bx)}\,,
 \]
the set of all (finite) \emph{reduced sequences} of elements of~$S$.
Obviously, a finite sequence $\bx\in\Seq S$ is reduced if{f} there is no $\by\in\Seq S$ such that $\bx\rightarrow\by$, if{f}~$\bx$ is maximal with respect to the partial ordering~$\rightarrow^*$.
Observe that larger words, with respect to~$\rightarrow^*$, have smaller length.

\begin{lemma}\label{L:x2Ered}
For each $\bx\in\Seq S$, there exists a unique element of~$\Seqr S$, which we shall denote by~$\red{\bx}$, such that $\bx\rightarrow^*\red{\bx}$.
Furthermore, $\red{\bx}$ can be characterized in each of the following three ways:
\begin{itemize}
\item $\red{\bx}$ is the largest element of~$\bx/{\equiv}$ with respect to the partial ordering~$\rightarrow^*$;

\item $\red{\bx}$ is the unique element of~$\bx/{\equiv}$ with smallest length;

\item $\red{\bx}$ is the unique reduced sequence equivalent to~$\bx$ modulo~$\equiv$.
\end{itemize}
\end{lemma}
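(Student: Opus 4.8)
The plan is to run the two standard ingredients of abstract rewriting: \emph{termination} and \emph{confluence}. Termination is immediate, and in a strong form: since $\ba\rightarrow\bb$ forces $\lh(\ba)=\lh(\bb)+1$, no $\rightarrow$-chain issued from $\bx$ can be longer than $\lh(\bx)$, so in particular there is no infinite one. Confluence of $\rightarrow^*$ is exactly Lemma~\ref{L:Confluent}\eqref{arr*confl}. For \emph{existence} of $\red{\bx}$, start from $\bx$ and apply $\rightarrow$-steps as long as possible; by termination this halts after finitely many steps at some $\by$ admitting no $\rightarrow$-successor, which, by the remark preceding the statement, means $\by\in\Seqr S$; put $\red{\bx}\eqdef\by$. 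For \emph{uniqueness}, suppose $\bx\rightarrow^*\by_0$ and $\bx\rightarrow^*\by_1$ with $\by_0,\by_1\in\Seqr S$; confluence of $\rightarrow^*$ gives $\bz$ with $\by_0\rightarrow^*\bz$ and $\by_1\rightarrow^*\bz$, but a reduced sequence is maximal for $\rightarrow^*$, whence $\by_0=\bz=\by_1$. This shows $\red{\bx}$ is well defined.

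It then remains to check the three characterizations, and the single observation that powers all of them is: if $\by\equiv\bx$, then $\by\rightarrow^*\red{\bx}$. Indeed, $\by\equiv\bx$ means there is $\bz$ with $\by\rightarrow^*\bz$ and $\bx\rightarrow^*\bz$; applying the already-proved existence and uniqueness to $\bz$, and noting $\bx\rightarrow^*\bz\rightarrow^*\red{\bz}$, we get $\red{\bz}=\red{\bx}$, so $\by\rightarrow^*\bz\rightarrow^*\red{\bx}$. Since also $\bx\rightarrow^*\red{\bx}$ gives $\red{\bx}\in\bx/{\equiv}$, this says precisely that $\red{\bx}$ is the largest element of $\bx/{\equiv}$ with respect to $\rightarrow^*$, which is the first characterization. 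The second follows because $\rightarrow$ strictly decreases length, hence so does $\rightarrow^*$ off the diagonal: $\red{\bx}$ has the smallest length in its class, and any $\by$ in the class of the same length must equal $\red{\bx}$, since otherwise $\by\rightarrow^*\red{\bx}$ would take at least one length-dropping step. The third is immediate: a reduced $\by\equiv\bx$ satisfies $\by\rightarrow^*\red{\bx}$, but reducedness makes $\by$ maximal for $\rightarrow^*$, so $\by=\red{\bx}$.

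There is no real obstacle here; essentially all the content was front-loaded into the confluence statement of Lemma~\ref{L:Confluent}. The only point requiring a little care is bookkeeping of the order direction: "largest with respect to $\rightarrow^*$" coincides with "of smallest length", precisely because $\rightarrow$-steps shrink length, and one should state this matching explicitly so that the first two characterizations are seen to be the same assertion read two ways.
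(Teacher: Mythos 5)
Your proof is correct and follows essentially the same route as the paper: existence from the length-decreasing property of $\rightarrow$, uniqueness from the confluence established in Lemma~\ref{L:Confluent}, and the three characterizations derived from the observation that every member of $\bx/{\equiv}$ reduces to $\red{\bx}$. The paper compresses all of this into three sentences; you have simply supplied the details it calls ``trivial'' and ``straightforward''.
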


\begin{proof}
The existence statement follows trivially from the fact that~$\rightarrow^*$ decreases the length.
The uniqueness statement follows trivially from Lemma~\ref{L:Confluent}.
Then the proof of the equivalence of the three statements about~$\red{\bx}$ is straightforward.
\end{proof}

The last observation of Lemma~\ref{L:x2Ered} is, essentially, contained (with a different proof) in Theorem~4 of Chapter~10 in Higgins~\cite{Higg1971} (take $A=S$ and define~$\gs$ as the constant map with value~$1$).

\begin{notation}\label{Not:Umon}
We denote by~$\Um{S}$ the quotient monoid $(\Seq S)/{\equiv}$.
Moreover, we denote by~$\bx/{\equiv}$ the~$\equiv$-equivalence class of a finite sequence~$\bx\in\Seq S$, and we set $\ell(\bx)\eqdef\lh\pI{\red{\bx}}$.
\end{notation}

The elements of~$\Um{S}$ are exactly the equivalence classes~$\bx/{\equiv}$, where $\bx\in\Seq S$.
In particular, the unit element of~$\Um{S}$, which we shall denote by~$1$, is the $\equiv$-equivalence class of the empty sequence~$\es$.

Since every equivalence class $\bx/{\equiv}$ is uniquely determined by~$\red{\bx}$, which is its representative of smallest length (cf. Lemma~\ref{L:x2Ered}), the monoid~$\Um{S}$ can be alternatively described as consisting of all the reduced finite sequences (i.e., $\Seqr S$), endowed with the multiplication defined by
 \begin{equation}\label{Eq:MultSeqES}
 \bx\cdot_S\by=\red{(\bx\conc\by)}\,,
 \text{ for all }\bx,\by\in\Seqr S\,.
 \end{equation}

\begin{definition}\label{D:Conical}
A category~$S$ is \emph{conical} if $xy\in\Idt S$ (i.e., $xy$ is defined and belongs to~$\Idt S$) implies that~$x\in\Idt S$ (equivalently, $y\in\Idt S$), for any $x,y\in S$.
(Of course, in that case, $x=y=xy$.)
\end{definition}

The following result shows that the description of the multiplication given in~\eqref{Eq:MultSeqES} is especially convenient in case~$S$ is conical.

\begin{lemma}\label{L:Mult4conical}
Let the category~$S$ be conical.
Then the multiplication given by~\textup{\eqref{Eq:MultSeqES}} is characterized by
$1\cdot_S\bx=\bx\cdot_S1=\bx$, for any $\bx\in\Um{S}$, together with
 \begin{equation}\label{Eq:MultSeqEScon}
 \pI{\bu\conc(x)}\cdot_S\pI{(y)\conc\bv}=
 \begin{cases}
 \bu\conc(x,y)\conc\bv\,,&\text{if }xy\uparrow\\
 \bu\conc(xy)\conc\bv\,,&\text{if }xy\downarrow
 \end{cases},
 \text{ for all }\bu\conc(x),(y)\conc\bv\in\Seqr S\,.
 \end{equation}
\end{lemma}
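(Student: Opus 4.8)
The plan is to show that the recursive description claimed for $\cdot_S$ in the conical case agrees with the already-established formula $\bx\cdot_S\by=\red{(\bx\conc\by)}$ from~\eqref{Eq:MultSeqES}. First I would dispose of the unit clauses: since $\es$ is reduced and $\es\conc\bx=\bx\conc\es=\bx$, we get $1\cdot_S\bx=\red{\bx}=\bx$ and symmetrically $\bx\cdot_S1=\bx$ for every reduced~$\bx$, which is immediate from Lemma~\ref{L:x2Ered}.

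The substantive point is~\eqref{Eq:MultSeqEScon}. Fix reduced sequences $\bu\conc(x)$ and $(y)\conc\bv$; here $x,y\notin\Idt S$ and consecutive products inside $\bu\conc(x)$ and inside $(y)\conc\bv$ are all undefined. By definition $\pI{\bu\conc(x)}\cdot_S\pI{(y)\conc\bv}=\red{\bigl(\bu\conc(x,y)\conc\bv\bigr)}$. I would argue that the only place in the concatenated word $\bu\conc(x,y)\conc\bv$ where a reduction step can ever be applied is at the junction pair $(x,y)$. Indeed, no entry is an identity (the entries of $\bu$, $x$, $y$, and the entries of $\bv$ all lie in $S\sd\Idt S$, by the definition of $\Seqr S$), so no identity-deletion step is available; and no adjacent product \emph{within} $\bu\conc(x)$ or \emph{within} $(y)\conc\bv$ is defined, by reducedness. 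This splits into two cases. If $xy\uparrow$, then $\bu\conc(x,y)\conc\bv$ has no reduction step at all, hence is already reduced, so $\red{\bigl(\bu\conc(x,y)\conc\bv\bigr)}=\bu\conc(x,y)\conc\bv$, as claimed. If $xy\downarrow$, then one step gives $\bu\conc(x,y)\conc\bv\rightarrow\bu\conc(xy)\conc\bv$; here conicality enters: since $x\notin\Idt S$ and $y\notin\Idt S$, the product $xy$ is not an identity, so $xy\in S\sd\Idt S$. Thus $\bu\conc(xy)\conc\bv$ is a sequence of non-identity elements, and I must check it is reduced, i.e.\ that none of its consecutive products is defined. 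The new adjacencies created are $(\text{last entry of }\bu,\,xy)$ and $(xy,\,\text{first entry of }\bv)$; writing $\bu=\bu'\conc(u)$, the product $u\cdot(xy)$, if defined, would force $ux\downarrow$ by the semicategory axiom (associativity of definedness: $(ux)y\downarrow$ implies $ux\downarrow$), contradicting reducedness of $\bu\conc(x)$, and symmetrically on the right with $\bv=(w)\conc\bv'$ and $yw$. So $\bu\conc(xy)\conc\bv$ is reduced, giving $\red{\bigl(\bu\conc(x,y)\conc\bv\bigr)}=\bu\conc(xy)\conc\bv$.

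Finally I would note that formulas \eqref{Eq:MultSeqEScon} together with the unit clauses do \emph{characterize} $\cdot_S$: every nonempty reduced sequence can be written as $\bu\conc(x)$ (and also as $(y)\conc\bv$) with a one-element tail, and for nonempty $\bx,\by$ the case distinction on whether the joining product of the last entry of $\bx$ and the first entry of $\by$ is defined covers all possibilities; since multiplication in $\Um{S}$ is a totally defined binary operation, these cases plus the unit rule pin it down uniquely. The only genuinely non-routine observation is the containment $xy\in S\sd\Idt S$ supplied by conicality, which is exactly what makes the output of a single reduction step again a legitimate candidate for the reduced form; everything else is a direct unwinding of the definitions of $\Seqr S$, $\red{(-)}$, and the semicategory axioms. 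I do not expect any real obstacle, only the need to be careful that the ``no other reduction is possible'' claim uses \emph{both} the reducedness of the two factors and the semicategory definedness axiom for the newly formed adjacencies.
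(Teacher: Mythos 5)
Your proposal is correct and follows essentially the same route as the paper, whose proof consists only of the single key observation (conicality makes $xy$ a non-identity, so $\bu\conc(xy)\conc\bv$ stays reduced) and leaves the remaining verifications — the unit clauses, the undefinability of the new adjacent products via the semicategory axiom, and the fact that these rules characterize the multiplication — as implicit routine checks, which you have simply written out in full.
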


\begin{proof}
Suppose that~$xy$ is defined.
Since~$S$ is conical and neither~$x$ nor~$y$ is an identity, $xy$ is not an identity, thus the finite sequence $\bu\conc(xy)\conc\bv$ remains reduced in the second case of~\eqref{Eq:MultSeqEScon}.
\end{proof}

In order to ease notation, we will usually write~$\bx\by$, or sometimes $\bx\cdot\by$ in case we need a separator, instead of~$\bx\cdot_S\by$.

A straightforward application of Lemma~\ref{L:Mult4conical} yields the following corollary.

\begin{corollary}\label{C:Ineq4ell}
The following statements hold, for any category~$S$:
\begin{enumerater}
\item $\ell(\bx\by)\leq\ell(\bx)+\ell(\by)$, for all $\bx,\by\in\Um{S}$.

\item
If, in addition, $S$ is conical, then
\begin{enumeratei}
\item $\ell(\bx)\leq\ell(\bx\by)$ and $\ell(\by)\leq\ell(\bx\by)$;

\item $\ell(\bx\by)\in\set{\ell(\bx)+\ell(\by)-1,\ell(\bx)+\ell(\by)}$,
\end{enumeratei}
for all $\bx,\by\in\Seqr S$.
\end{enumerater}
\end{corollary}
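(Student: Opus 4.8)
The plan is to reduce both parts to the explicit description of multiplication on reduced sequences. By Lemma~\ref{L:x2Ered} and~\eqref{Eq:MultSeqES}, every element of~$\Um{S}$ has a unique reduced representative, $\ell(\bx)=\lh(\bx)$ whenever $\bx\in\Seqr S$, and $\bx\cdot_S\by=\red{(\bx\conc\by)}$ for all $\bx,\by\in\Seqr S$; moreover, as observed just before Lemma~\ref{L:x2Ered}, each one-step reduction strictly decreases length, so $\rightarrow^*$ never increases length.

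For part~(1), I would argue directly for an arbitrary category~$S$: given $\bx,\by\in\Um{S}$, represent them by reduced sequences (replacing arbitrary representatives by their reduced forms is legitimate since~$\equiv$ is a congruence, Lemma~\ref{L:Confluent}\eqref{EquivCong}). Then $\ell(\bx\by)=\lh\bigl(\red{(\bx\conc\by)}\bigr)\leq\lh(\bx\conc\by)=\lh(\bx)+\lh(\by)=\ell(\bx)+\ell(\by)$.

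For part~(2), assume~$S$ conical and $\bx,\by\in\Seqr S$. If~$\bx$ or~$\by$ is the empty sequence, both~(i) and~(ii) are immediate, so assume $\bx=\bu\conc(x)$ and $\by=(y)\conc\bv$ with $x,y\in S\setminus\Idt S$. By Lemma~\ref{L:Mult4conical}: if $xy\uparrow$ then $\bx\by=\bu\conc(x,y)\conc\bv$ is reduced, whence $\ell(\bx\by)=\lh(\bx)+\lh(\by)=\ell(\bx)+\ell(\by)$; if $xy\downarrow$ then $\bx\by=\bu\conc(xy)\conc\bv$ is reduced, whence $\ell(\bx\by)=\lh(\bx)+\lh(\by)-1=\ell(\bx)+\ell(\by)-1$. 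This is exactly~(ii). For~(i): in the first case $\ell(\bx\by)\geq\ell(\bx)$ and $\ell(\bx\by)\geq\ell(\by)$ trivially; in the second case $\ell(\bx)\geq1$ and $\ell(\by)\geq1$ because~$\bx$ and~$\by$ are nonempty, hence $\ell(\bx\by)=\ell(\bx)+\ell(\by)-1\geq\ell(\bx)$ and likewise $\geq\ell(\by)$.

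I do not expect a genuine obstacle here. The only subtleties worth flagging are that part~(1) cannot invoke Lemma~\ref{L:Mult4conical} (which is valid only in the conical case) and must instead rely on the fact that reductions shorten sequences, and that in part~(2) one must dispose of the degenerate empty-sequence cases before peeling off the last, resp.\ first, entry of~$\bx$, resp.\ of~$\by$.
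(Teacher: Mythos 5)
Your proof is correct and matches the paper's intent: the paper disposes of this corollary in one line as ``a straightforward application of Lemma~\ref{L:Mult4conical}'', and your part~(2) is exactly that computation spelled out, including the correct handling of the empty-sequence cases. Your observation that part~(1) cannot literally come from Lemma~\ref{L:Mult4conical} (which assumes conicality) and must instead use the fact that $\rightarrow^*$ never increases length is a valid and worthwhile refinement of the paper's terse justification.
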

 
We will usually work with the description of~$\Um{S}$ as $\Seqr{S}$ endowed with the multiplication defined by~\eqref{Eq:MultSeqES}.
A noticeable exception, involving the description of~$\Um{S}$ as $(\Seq S)/{\equiv}$\,, is the following straightforward universal characterization of $\Um{S}$.

\begin{proposition}\label{P:UmSEUniv}
The map $\eps_S\colon S\to\Um{S}$, $x\mapsto(x)/{\equiv}$ is a functor.
Furthermore, the pair $(\Um{S},\eps_S)$ is an initial object in the Category of all functors from~$S$ to a monoid.
Hence, $\Umn$ defines a left adjoint of the inclusion Functor $\MON\hookrightarrow\CAT$.
\end{proposition}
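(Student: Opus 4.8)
The plan is to work throughout with the presentation of $\Um{S}$ as the quotient monoid $(\Seq S)/{\equiv}$, which is the description that makes the universal property immediate (the $\Seqr S$ description is irrelevant here). First I would check that $\eps_S$ is a functor. Recall that $\equiv$ is a monoid congruence on $(\Seq S,\conc)$ by Lemma~\ref{L:Confluent}\eqref{EquivCong}, so the product on $\Um{S}=(\Seq S)/{\equiv}$ is the one induced by concatenation. If $x,y\in S$ and $xy\downarrow$, then $(x)\conc(y)=(x,y)\rightarrow(xy)$, hence $(x,y)\equiv(xy)$ and therefore $\eps_S(x)\cdot\eps_S(y)=\bigl((x)\conc(y)\bigr)/{\equiv}=(xy)/{\equiv}=\eps_S(xy)$. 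If $e\in\Idt S$, then $(e)\rightarrow\es$, so $\eps_S(e)=\es/{\equiv}=1$. Thus $\eps_S$ is a functor into the monoid $\Um{S}$.

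Next I would prove initiality. Fix a functor $f\colon S\to M$ with $M$ a monoid. The assignment $\hat f\colon\Seq S\to M$, $(x_1,\dots,x_n)\mapsto f(x_1)\cdots f(x_n)$ (with $\hat f(\es)=1$), is obviously a monoid homomorphism from $(\Seq S,\conc)$ to $M$. The key point is that $\hat f$ is constant on each $\equiv$-class: since $\bx\equiv\by$ exactly when $\bx\rightarrow^*\bz$ and $\by\rightarrow^*\bz$ for some $\bz$, it suffices to show that $\ba\rightarrow\bb$ implies $\hat f(\ba)=\hat f(\bb)$, and this is immediate from the two clauses defining $\rightarrow$ together with functoriality of $f$: deleting an entry lying in $\Idt S$ does not change the product because such an entry is sent by $f$ to $1$, and replacing a consecutive block $(x,y)$, with $xy\downarrow$, by $(xy)$ does not change it because $f(x)f(y)=f(xy)$. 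Hence $\hat f$ factors through the quotient map $\bx\mapsto\bx/{\equiv}$ as $\hat f(\bx)=g(\bx/{\equiv})$ for a monoid homomorphism $g\colon\Um{S}\to M$, and $g(\eps_S(x))=\hat f\bigl((x)\bigr)=f(x)$, i.e.\ $g\circ\eps_S=f$. For uniqueness, if $g'\colon\Um{S}\to M$ is any monoid homomorphism with $g'\circ\eps_S=f$, then since every element of $\Um{S}$ can be written $(x_1,\dots,x_n)/{\equiv}=\eps_S(x_1)\cdots\eps_S(x_n)$, one gets $g'\bigl((x_1,\dots,x_n)/{\equiv}\bigr)=f(x_1)\cdots f(x_n)=g\bigl((x_1,\dots,x_n)/{\equiv}\bigr)$, so $g'=g$. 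This is precisely the assertion that $(\Um{S},\eps_S)$ is initial among functors from $S$ to a monoid.

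The adjunction statement is then formal. What has just been shown is that, for every category $S$ and every monoid $M$ (regarded as a one-object category \emph{via} the inclusion $\MON\hookrightarrow\CAT$), composition with $\eps_S$ is a bijection from the set of monoid homomorphisms $\Um{S}\to M$ onto the set of functors $S\to M$. To promote this to an adjunction I would first extend $\Umn$ to a functor $\CAT\to\MON$: a functor $\gf\colon S\to T$ yields a functor $\eps_T\circ\gf\colon S\to\Um{T}$ into a monoid, hence, by initiality, a unique monoid homomorphism $\Um{\gf}\colon\Um{S}\to\Um{T}$ with $\Um{\gf}\circ\eps_S=\eps_T\circ\gf$; functoriality of $\Umn$ and naturality of the above bijection in both variables follow formally from the uniqueness clause. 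This exhibits $\Umn$ as left adjoint to $\MON\hookrightarrow\CAT$, with $\eps$ the unit.

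Every step here is routine bookkeeping, so there is no genuine obstacle; the only points that repay a little attention are committing to the $(\Seq S)/{\equiv}$ description (so that $\hat f$ is visibly a homomorphism and the passage to $g$ is automatic) and checking well-definedness of $g$ by reducing to the single-step relation $\rightarrow$ rather than directly to $\equiv$ or $\rightarrow^*$.
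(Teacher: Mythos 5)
Your proof is correct and follows exactly the route the paper intends: it uses the description of $\Um{S}$ as $(\Seq S)/{\equiv}$ (which the paper explicitly flags as the relevant one for this proposition, while omitting the proof as ``straightforward''), checks well-definedness of the induced homomorphism by reducing to the one-step relation $\rightarrow$, and derives the adjunction formally from initiality. Nothing is missing.
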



Accordingly, we shall call the monoid~$\Um{S}$ (or, more precisely, the pair\linebreak $(\Um{S},\eps_S)$) the \emph{universal monoid} of~$S$.

The requirement that~$\eps_S$ be a functor --- thus collapses all identities --- implies that~$\eps_S$ is not one-to-one as a rule (so~$S$ may not be a partial subsemigroup of~$\Um{S}$).
Nevertheless, it falls short of being so.
The following observation is a specialization, to the construction~$\Um{S}$, of Corollary~1 of Chapter~10 in Higgins~\cite{Higg1971}.

\begin{lemma}\label{L:KerofepsS}
For all $x,y\in S$, $\eps_S(x)=\eps_S(y)$ if{f} either $x=y$ or~$x$ and~$y$ are both identities.
In particular, the restriction of~$\eps_S$ to every hom-set of~$S$ is one-to-one.
\end{lemma}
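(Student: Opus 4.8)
The plan is to reduce the whole statement to a computation of the reduced form $\red{(x)}$ of a one-entry sequence, then invoke the uniqueness of reduced representatives from Lemma~\ref{L:x2Ered}. The first step is the observation that, by the very definition of the one-step reduction $\rightarrow$, a one-entry sequence $(x)$ admits a reduction step only when $x\in\Idt S$, in which case $(x)\rightarrow\es$; there is simply no adjacent pair to contract. Hence $\red{(x)}=(x)$ if $x\notin\Idt S$, while $\red{(x)}=\es$ (equivalently $\eps_S(x)=1$) if $x\in\Idt S$. Since $\eps_S(x)=\eps_S(y)$ is, by the construction of $\Um S=(\Seq S)/{\equiv}$, the same as $(x)\equiv(y)$, which by Lemma~\ref{L:x2Ered} amounts to $\red{(x)}=\red{(y)}$, the nontrivial direction follows by a two-case split: if $x\in\Idt S$ then $\red{(x)}=\es$ forces $\red{(y)}=\es$, and a non-identity $y$ would have $\red{(y)}=(y)$ of length~$1$, so $y\in\Idt S$ too; if $x\notin\Idt S$ then $\red{(x)}=(x)$ has length~$1$, so $\red{(y)}$ has length~$1$, whence $y\notin\Idt S$, $\red{(y)}=(y)$, and $(x)=(y)$, i.e.\ $x=y$. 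The converse direction is immediate: $x=y$ trivially gives $\eps_S(x)=\eps_S(y)$, and if $x,y$ are both identities then $\eps_S(x)=\eps_S(y)=1$.

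For the final ``in particular'' sentence, I would use that an identity $e$ satisfies $\sr{e}=\tg{e}=e$, so the only possible identity lying inside a hom-set $S(a,b)$ is $a$ itself, and only when $a=b$. Thus if $x,y\in S(a,b)$ satisfy $\eps_S(x)=\eps_S(y)$, the ``both identities'' alternative from the first part collapses to $x=a=b=y$, and in either alternative we conclude $x=y$.

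I do not anticipate a genuine obstacle here: the only point requiring any care is checking that $\rightarrow$ cannot act nontrivially on a length-one sequence, which is immediate from its definition (delete an $\Idt S$-entry, or contract a definable adjacent pair --- and a single entry has no such pair). Everything past that is bookkeeping resting on the confluence and uniqueness already established in Lemmas~\ref{L:Confluent} and~\ref{L:x2Ered}.
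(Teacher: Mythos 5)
Your proposal is correct and follows essentially the same route as the paper: compute $\red{(x)}$ for a one-entry sequence (it is $(x)$ for a non-identity and $\es$ for an identity), conclude via the uniqueness of reduced forms from Lemma~\ref{L:x2Ered}, and then note that a hom-set $S(a,b)$ contains at most one identity, namely $a$ when $a=b$. The extra detail you supply (the length-one sequence admitting no contraction of an adjacent pair, and the explicit two-case split) is exactly what the paper compresses into ``the first statement follows immediately.''
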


\begin{proof}
It is obvious that~$\red{(x)}$ is equal to~$(x)$ if~$x$ is not an identity, and the empty sequence otherwise.
The first statement follows immediately.

Now let $a,b\in\Idt S$ and let $x,y\in S(a,b)$ such that $\eps_S(x)=\eps_S(y)$.
If $x\neq y$, then, by the paragraph above, $x=y=a$, a contradiction.
\end{proof}

\begin{notation}\label{Not:Ugrp}
We denote by~$\Ug{M}$ the universal group of a monoid~$M$ --- that is, the initial object in the Category of all monoid homomorphisms from~$M$ to a group.
We extend this notation to all categories, by setting
 \[
 \Ug{S}\eqdef\Ug{\Um{S}}\,
 \]
for every category~$S$.
\end{notation}

Pre-composing the canonical monoid homomorphism $\Um{S}\to\Ug{S}$ with the canonical homomorphism $\eps_S\colon S\to\Um{S}$ yields the canonical homomorphism $\eta_S\colon S\to\Ug{S}$, and the following easy consequence of Proposition~\ref{P:UmSEUniv}.

\begin{proposition}\label{P:UgSEUniv}
The map $\eta_S\colon S\to\Ug{S}$ is a group-valued functor.
Furthermore, the pair $(\Ug{S},\eta_S)$ is an initial object in the Category of all functors from~$S$ to a group.
Hence, $\Ugn$ defines a left adjoint of the inclusion Functor $\GP\hookrightarrow\CAT$.
\end{proposition}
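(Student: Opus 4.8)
The plan is to deduce everything formally from Proposition~\ref{P:UmSEUniv} together with the universal property defining~$\Ug{M}$ in Notation~\ref{Not:Ugrp}, using the standard fact that initial objects (equivalently, left adjoints) can be built in stages.

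First I would record that $\eta_S$ is a group-valued functor. By construction (the paragraph preceding the statement), $\eta_S=\kappa_S\circ\eps_S$, where $\eps_S\colon S\to\Um{S}$ is the functor of Proposition~\ref{P:UmSEUniv} and $\kappa_S\colon\Um{S}\to\Ug{S}$ is the canonical monoid homomorphism into the group completion. Viewing the monoid~$\Um{S}$ and the group~$\Ug{S}$ as one-object categories, $\kappa_S$ is a functor, hence so is the composite~$\eta_S$; and since its codomain is a group, $\eta_S$ is group-valued.

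Next, the universal property. Let $f\colon S\to G$ be any functor with~$G$ a group. Since~$G$ is in particular a monoid, Proposition~\ref{P:UmSEUniv} yields a unique monoid homomorphism $h\colon\Um{S}\to G$ with $h\circ\eps_S=f$. Since~$G$ is a group, the universal property of the group completion (Notation~\ref{Not:Ugrp}) yields a unique group homomorphism $g\colon\Ug{S}=\Ug{\Um{S}}\to G$ with $g\circ\kappa_S=h$. Then $g\circ\eta_S=g\circ\kappa_S\circ\eps_S=h\circ\eps_S=f$, so~$g$ is a morphism from $(\Ug{S},\eta_S)$ to $(G,f)$ in the Category of functors from~$S$ to a group. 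For uniqueness, suppose $g,g'\colon\Ug{S}\to G$ are group homomorphisms with $g\circ\eta_S=g'\circ\eta_S=f$. Then $g\circ\kappa_S$ and $g'\circ\kappa_S$ are both monoid homomorphisms $\Um{S}\to G$ composing with~$\eps_S$ to give~$f$, so $g\circ\kappa_S=g'\circ\kappa_S$ by the uniqueness clause of Proposition~\ref{P:UmSEUniv}; then $g=g'$ by the uniqueness clause in the universal property of~$\Ug{\Um{S}}$. This proves that $(\Ug{S},\eta_S)$ is an initial object in the Category of all functors from~$S$ to a group.

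Finally, the adjunction statement is just a reformulation: the universal property just established says precisely that for every group~$G$ the map $\GP(\Ug{S},G)\to\CAT(S,G)$, $g\mapsto g\circ\eta_S$, is a bijection, naturally in~$S$ and~$G$; equivalently, $\Ugn$ is left adjoint to the inclusion Functor $\GP\hookrightarrow\CAT$. (Alternatively, one may invoke Proposition~\ref{P:UmSEUniv}, the fact that $\Ug{\cdot}\colon\MON\to\GP$ is left adjoint to $\GP\hookrightarrow\MON$, and the fact that a composite of left adjoints is a left adjoint.) There is no real obstacle: the whole argument is a routine diagram chase, the only point needing a little care being the bookkeeping that identifies functors into a one-object category with homomorphisms of the corresponding monoid or group, and keeping the two uniqueness clauses straight.
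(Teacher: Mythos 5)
Your proof is correct and follows exactly the route the paper intends: the paper gives no written proof, presenting the result as an ``easy consequence'' of Proposition~\ref{P:UmSEUniv} obtained by composing the universal property of $\Um{S}$ with that of the group completion $\Ug{\Um{S}}$, which is precisely your two-stage argument. Nothing is missing.
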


Accordingly, we shall call the group~$\Ug{S}$ (or, more precisely, the pair\linebreak $(\Ug{S},\eta_S)$) the \emph{universal group} of~$S$.

\begin{remark}\label{Rk:Cat2SE}
For any category~$S$, every element $\bx\in\Um{S}$ can be written in a unique way as a product $\eps_S(x_1)\cdots\eps_S(x_n)$, which we will often abbreviate as $x_1\cdots x_n$, where no~$x_i$ is an identity and no product~$x_ix_{i+1}$ is defined: namely, let $\red{\bx}=(x_1,\dots,x_n)$.
The elements of $S\setminus\Idt S$, here identified with their images under~$\eps_S$, will be called the \emph{standard generators} of the monoid~$\Um{S}$.

In particular, if~$S$ is a groupoid, then every element of~$\Um{S}$ is a product of invertibles, thus is itself invertible; and thus $\Um{S}=\Ug{S}$ is the universal group of~$S$.
\end{remark}

\section{Preservation of conicality and cancellativity}
\label{S:UmScanc}

The main goal of this section is to prove that conicality and cancellativity of a category can be read on its universal monoid.
The case of conicality is easy.

\begin{proposition}\label{P:PresConical}
A category~$S$ is conical if{f} its universal monoid~$\Um{S}$ is conical.
\end{proposition}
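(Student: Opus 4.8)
The plan is to prove the two implications separately; each one is short, so the work is mostly bookkeeping with the normal-form machinery already in place.

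For the direction ``$\Um{S}$ conical $\Rightarrow$ $S$ conical'' I would argue directly through the functor $\eps_S\colon S\to\Um{S}$ of Proposition~\ref{P:UmSEUniv}. Given $x,y\in S$ with $xy\in\Idt S$, the fact that $\eps_S$ is a functor, hence collapses every identity to the unit $1$ of $\Um{S}$, yields $\eps_S(x)\cdot\eps_S(y)=\eps_S(xy)=1$. Conicality of $\Um{S}$ then gives $\eps_S(x)=1$. Now I invoke the computation carried out in the proof of Lemma~\ref{L:KerofepsS}, namely that $\red{(x)}=(x)$ when $x\notin\Idt S$ and $\red{(x)}=\es$ otherwise; thus $\eps_S(x)=(x)/{\equiv}=1$ forces $x\in\Idt S$, which is exactly what conicality of $S$ requires.

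For the converse, ``$S$ conical $\Rightarrow$ $\Um{S}$ conical'', I would use the description of $\Um{S}$ as $\Seqr S$ with the multiplication~\eqref{Eq:MultSeqES}, together with the length estimate of Corollary~\ref{C:Ineq4ell}. Suppose $\bx,\by\in\Um{S}$ satisfy $\bx\by=1$. Then $\ell(\bx\by)=\ell(1)=\lh(\es)=0$, and since $S$ is conical, Corollary~\ref{C:Ineq4ell}(2)(i) applies and gives $\ell(\bx)\leq\ell(\bx\by)=0$ and $\ell(\by)\leq\ell(\bx\by)=0$. Hence $\ell(\bx)=\ell(\by)=0$, i.e.\ $\red{\bx}=\red{\by}=\es$, so $\bx=\by=1$ in $\Um{S}$; thus $\Um{S}$ is conical.

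I expect no genuine obstacle here: the only points needing a line of care are the two ``dictionary'' equivalences $\eps_S(x)=1\iff x\in\Idt S$ and $\ell(\bz)=0\iff\bz=1$, both immediate from the reduced-sequence results (Lemmas~\ref{L:x2Ered} and~\ref{L:KerofepsS}), and the observation that the substantive content of the nontrivial direction is already absorbed into Corollary~\ref{C:Ineq4ell} (whose proof uses conicality precisely to keep $\bu\conc(xy)\conc\bv$ reduced upon concatenation).
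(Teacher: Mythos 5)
Your proposal is correct and follows essentially the same route as the paper: the forward direction via the functoriality of $\eps_S$ together with $\eps_S(x)=1\iff x\in\Idt S$, and the converse via the observation $\ell(\bx)=0\iff\bx=1$ combined with Corollary~\ref{C:Ineq4ell}. The paper's proof is just a more compressed version of exactly this argument.
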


\begin{proof}
Let~$\Um{S}$ be conical and let $a,b\in S$ such that $ab$ is an identity.
Then $\eps_S(a)\eps_S(b)=\eps_S(ab)=1$, thus, as~$\Um{S}$ is conical, $\eps_S(a)=\eps_S(b)=1$, and thus~$a$ and~$b$ are both identities.

Observing that $\ell(\bx)=0$ if{f} $\bx=1$, for any $\bx\in\Um{S}$, it follows immediately from Corollary~\ref{C:Ineq4ell} that if~$S$ is conical, then so is~$\Um{S}$.
\end{proof}

Preservation of cancellativity and invertibility are slightly less straightforward.

\begin{lemma}\label{L:PresCancella}
The following statements hold, for any element~$a$ in a category~$S$:
\begin{enumerater}
\item\label{leftcanc}
$a$ is left cancellable in~$S$ if{f}~$\eps_S(a)$ is left cancellable in~$\Um{S}$;

\item\label{rightcanc}
$a$ is right cancellable in~$S$ if{f}~$\eps_S(a)$ is right cancellable in~$\Um{S}$;

\item\label{rightinv}
$a$ is right invertible in~$S$ if{f} $\eps_S(a)$ is right invertible in~$\Um{S}$;

\item\label{leftinv}
$a$ is left invertible in~$S$ if{f} $\eps_S(a)$ is left invertible in~$\Um{S}$.
\end{enumerater}
\end{lemma}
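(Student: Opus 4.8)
The plan is to prove all four statements by exploiting the concrete description of $\Um{S}$ as $\Seqr{S}$ equipped with the multiplication $\bx\cdot_S\by=\red{(\bx\conc\by)}$. The easy direction of each equivalence is the ``only if'' part and follows from $\eps_S$ being a functor: if, say, $a$ were left cancellable in $S$ but $\eps_S(a)$ failed to be left cancellable in $\Um{S}$, I would need to produce a genuine counterexample inside $S$, so that direction actually requires the real work; conversely, the ``if'' directions are cheap, since $\eps_S$ restricted to each hom-set is one-to-one (Lemma~\ref{L:KerofepsS}) and turns products in $S$ into products in $\Um{S}$. So I would set up each item as: (trivial direction) note that if $\eps_S(a)$ has the relevant property then, by functoriality and injectivity on hom-sets, so does $a$; (substantial direction) assume the property in $S$ and deduce it in $\Um{S}$.

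For \eqref{leftcanc}, assume $a$ is left cancellable in $S$ and suppose $\eps_S(a)\bx=\eps_S(a)\by$ in $\Um{S}$ with $\bx,\by\in\Seqr{S}$. I would first dispose of the case $a\in\Idt S$ (then $\eps_S(a)=1$ and there is nothing to prove), so assume $a\notin\Idt S$. Write $\bx=(x_1,\dots,x_m)$ and $\by=(y_1,\dots,y_n)$. The product $\eps_S(a)\bx=\red{(a,x_1,\dots,x_m)}$; the only possible reduction at the front is the contraction of $(a,x_1)$ to $(ax_1)$ when $ax_1\downarrow$, and similarly for $\by$. The key step is a case analysis on whether $ax_1\downarrow$ and whether $ay_1\downarrow$: comparing first entries of the two reduced sequences forces these two cases to agree, and then either $x_1=y_1$ directly (whence induction on $\max(m,n)$ finishes), or $ax_1=ay_1$ in $S$, to which left cancellability of $a$ applies to give $x_1=y_1$ again. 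One must be slightly careful that after contracting, $ax_1$ is itself a non-identity (guaranteed once we know the product is nontrivial) and that the tails are still reduced; these are routine. Item \eqref{rightcanc} is the mirror image, working at the back of the sequences and using right cancellability.

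For \eqref{rightinv}: the ``only if'' direction is immediate since $\eps_S$ is a functor and sends an identity of $S$ to $1$. For ``if'', suppose $\eps_S(a)$ has a right inverse $\bx\in\Seqr{S}$, i.e.\ $\eps_S(a)\cdot_S\bx=1$, so $\red{(a)\conc\bx}=\es$. Since reduction decreases length by one at each step and can only ever contract adjacent defined products or delete identities, a sequence reducing all the way to $\es$ must have every entry eventually absorbed; tracking this back, the only way $(a)\conc\bx$ (with $a\notin\Idt S$, else trivial) can reduce to the empty word is that it has length reducible to $0$, which forces a chain of contractions collapsing $a$ together with $\bx$ to a single identity — concretely, one shows by a short induction on $\ell$ that $\red{(a)\conc\bx}=\es$ entails $ax_1\cdots x_m$ is defined in $S$ and equals an identity, hence $a$ is right invertible in $S$ with right inverse $x_1\cdots x_m$. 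Item \eqref{leftinv} is again the mirror image. The main obstacle, and the only place needing care, is this last deduction: making precise that ``a reduced sequence has a one-sided inverse in $\Um{S}$ iff the corresponding element is a genuine one-sided invertible of $S$'', i.e.\ controlling the confluent rewriting system enough to conclude that total collapse to $\es$ can only happen via a single telescoping product landing in $\Idt S$ — here I would lean on Lemma~\ref{L:x2Ered} (uniqueness of $\red{\bx}$) and an induction on $\lh$, rather than trying to trace an individual reduction sequence.
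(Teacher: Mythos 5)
Your overall strategy is the same as the paper's: the directions from~$\Um{S}$ back to~$S$ are handled by injectivity of~$\eps_S$ on hom-sets (Lemma~\ref{L:KerofepsS}), the substantial directions are handled by tracking how the single reduction at the junction $(a,x_1)$ can go, and items~\eqref{rightcanc} and~\eqref{leftinv} follow by symmetry. Your treatment of right invertibility (total collapse of $(a)\conc\bx$ to~$\es$ forces a telescoping product landing in~$\Idt S$) is also essentially the paper's argument, just phrased as an induction rather than a direct contradiction.

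There is, however, one genuine gap in your argument for~\eqref{leftcanc}. You dismiss the possibility that $ax_1$ is an identity with the parenthetical ``guaranteed once we know the product is nontrivial,'' and this claim is false: if $\bx=(x_1,x_2)$ is reduced and $ax_1\in\Idt S$, then $\eps_S(a)\bx=(x_2)\neq 1$, yet the contraction at the front is followed by a deletion, the reduced form of $\eps_S(a)\bx$ begins with~$x_2$ rather than with~$a$ or~$ax_1$, and $\ell$ \emph{drops} by one instead of obeying your two-case bookkeeping. This situation really occurs for left-cancellable~$a$ (e.g.\ whenever~$a$ is invertible), so your case analysis as stated does not cover it. The paper resolves this by first observing that a left-cancellable, right-invertible element~$a$ is in fact invertible (from $as=\sr{a}$ one gets $asa=a\tg{a}$, hence $sa=\tg{a}$ by left cancellation), whence $\eps_S(a)$ is invertible and trivially left cancellable; only then does it assume~$a$ not right invertible, which is exactly what rules out the ``third case'' $ax_1\in\Idt S$ and validates the two-case comparison of first entries. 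Your proof needs this preliminary dichotomy (or an equivalent ad hoc argument for the identity-contraction case); with it inserted, the rest goes through as you describe. A minor further point: in the two surviving cases no induction on $\max(m,n)$ is needed, since equality of the reduced sequences immediately forces $m=n$ and entrywise agreement (after one application of left cancellation in~$S$ when $ax_1$ and $ay_1$ are both defined).
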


\begin{proof}
We prove the results about left cancellativity and right invertibility.
The results about right cancellativity and left invertibility follow by symmetry.

Let~$\eps_S(a)$ be left cancellable in~$\Um{S}$ and let $x,y\in S$ such that $ax=ay$.
In particular, $x$ and~$y$ have the same source and the same target.
Moreover, since $\eps_S(a)\eps_S(x)=\eps_S(a)\eps_S(y)$ and $\eps_S(a)$ is left cancellable in~$\Um{S}$, we get $\eps_S(x)=\eps_S(y)$. 
By the second part of Lemma~\ref{L:KerofepsS}, it follows that $x=y$.

Suppose, conversely, that~$a$ is left cancellable in~$S$.
We need to prove that~$\eps_S(a)$ is left cancellable in~$\Um{S}$.

Suppose first that~$a$ is right invertible in~$S$ and let~$s$ be a right inverse of~$a$ in~$S$.
{}From $as=\sr{a}$ it follows that $asa=a=a\tg{a}$, thus, since~$a$ is left cancellable in~$S$, $sa=\tg{a}$.
It follows that~$a$ is invertible in~$S$, so~$\eps_S(a)$ is invertible in~$\Um{S}$ and the desired conclusion holds.

Suppose from now on that~$a$ is not right invertible in~$S$.
For every reduced sequence $\bz=(z_1,\dots,z_k)\in\Um{S}$, two cases can occur:
\begin{itemize}
\item either~$k=0$ or~$az_1$ is undefined.
Then $\eps_S(a)\bz=(a,z_1,\dots,z_k)$.

\item $k>0$ and $az_1$ is defined.
Then $\eps_S(a)\bz=(az_1,z_2,\dots,z_k)$.
\end{itemize}
The putative ``third case'' suggested by the above, namely $k>0$ and $az_1$ is an identity of~$S$, does not occur, because~$a$ is not right invertible in~$S$.
In particular, observe the following:
 \begin{equation}\label{Eq:l(az)}
 \ell\pI{\eps_S(a)\bz}\in\set{\ell(\bz),\ell(\bz)+1}\,.
 \end{equation}
Now let $\bx=(x_1,\dots,x_m)$ and $\by=(y_1,\dots,y_n)$ be elements in~$\Um{S}$ (reduced sequences) such that $\eps_S(a)\bx=\eps_S(a)\by$; we must prove that $\bx=\by$.
We may assume that $m\leq n$.
It follows from~\eqref{Eq:l(az)} that either $n=m$ or $n=m+1$.
Since the case where $m=n=0$ is trivial, there are three cases left to consider.

\subsection*{Case~1}
$m=n>0$ and $ax_1$, $ay_1$ are both undefined.
Then
 \[
 (a,x_1,\dots,x_m)=\eps_S(a)\bx=\eps_S(a)\by
 =(a,y_1,\dots,y_m)\,,
 \]
thus $x_i=y_i$ whenever $1\leq i\leq m$.

\subsection*{Case~2}
$m=n>0$ and $ax_1$, $ay_1$ are both defined.
Then
 \[
 (ax_1,x_2,\dots,x_m)=\eps_S(a)\bx=\eps_S(a)\by
 =(ay_1,y_2,\dots,y_m)\,,
 \]
thus $ax_1=ay_1$ and $x_i=y_i$ whenever $2\leq i\leq m$.
Furthermore, from the first equation and the left cancellativity of~$a$ it follows that $x_1=y_1$.

\subsection*{Case 3}
$n=m+1$, $ax_1$ is undefined, and~$ay_1$ is defined.
Then
 \[
 (a,x_1,\dots,x_m)=\eps_S(a)\bx=\eps_S(a)\by
 =(ay_1,y_2,\dots,y_{m+1})\,,
 \]
thus $ay_1=a=a\tg{a}$, and thus $y_1=\tg{a}$, an identity of~$S$; in contradiction with the sequence~$\by$ being reduced.

Let us now suppose that~$a$ is right invertible in~$S$, that is, $as=\sr{a}$ for some $s\in S$.
Then $\eps_S(a)\eps_S(s)=1$, thus~$\eps_S(a)$ is right invertible in~$\Um{S}$.

Suppose, finally, that~$\eps_S(a)$ is right invertible in~$\Um{S}$, we must prove that~$a$ is right invertible in~$S$.
Suppose otherwise.
By assumption, there is a reduced sequence $\bb=(b_1,\dots,b_n)\in\Um{S}$ such that $\eps_S(a)\bb=1$.
In particular, $(a,b_1,\dots,b_n)$ is reducible, thus, since~$a$ is not an identity and~$\bb$ is a reduced sequence, $n>0$ and $ab_1$ is defined.
Since $(ab_1,b_2,\dots,b_n)$ should be equivalent to the empty sequence, it is reducible, thus $ab_1$ is an identity, a contradiction.
\end{proof}

Since every element of~$\Um{S}$ is a finite product of elements of the range of~$\eps_S$, and since left cancellativity, right cancellativity, and invertibility are all preserved under finite products, we obtain immediately the following.

\begin{corollary}\label{C:PresCancella}
A category~$S$ is left cancellative \pup{resp., right cancellative} if{f} its enveloping monoid~$\Um{S}$ is left cancellative \pup{resp., right cancellative}.
Moreover, $S$ is a groupoid if{f}~$\Um{S}$ is a group.
\end{corollary}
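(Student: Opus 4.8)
The plan is to read off the corollary from Lemma~\ref{L:PresCancella} by combining it with Remark~\ref{Rk:Cat2SE}, which records that every element of~$\Um{S}$ is a finite product $\eps_S(x_1)\cdots\eps_S(x_n)$ of standard generators. The only auxiliary fact I would need is elementary and monoid-theoretic: in any monoid, the set of left cancellable elements is closed under finite products and contains~$1$, and likewise for the right cancellable elements and for the invertible elements. Each of these is a one-line induction on the number of factors (if $ab$ occurs on the left and $a,b$ are left cancellable, then $ab\cdot x=ab\cdot y$ forces $a(bx)=a(by)$, hence $bx=by$, hence $x=y$; dually on the right; and a product of invertibles is inverted by the product of the inverses taken in reverse order).

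First I would handle left cancellativity. If~$S$ is left cancellative, then by Lemma~\ref{L:PresCancella}\eqref{leftcanc} every standard generator $\eps_S(a)$ is left cancellable in~$\Um{S}$, and by the closure fact above so is every finite product of standard generators; since by Remark~\ref{Rk:Cat2SE} these products exhaust~$\Um{S}$, the monoid~$\Um{S}$ is left cancellative. Conversely, if~$\Um{S}$ is left cancellative, then in particular each $\eps_S(a)$ is left cancellable in~$\Um{S}$, so~$a$ is left cancellable in~$S$ by Lemma~\ref{L:PresCancella}\eqref{leftcanc}, and as~$a$ ranges over~$S$ we conclude that~$S$ is left cancellative. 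The right-cancellative statement is proved the same way from part~\eqref{rightcanc}.

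For the groupoid claim I would argue that~$S$ is a groupoid if{f} every element of~$S$ is both left and right invertible, which by Lemma~\ref{L:PresCancella}\eqref{rightinv} and~\eqref{leftinv} holds if{f} $\eps_S(a)$ is invertible in~$\Um{S}$ for every $a\in S$ (for $a\in\Idt S$ this is automatic, as $\eps_S(a)=1$). If this holds, then every element of~$\Um{S}$, being a finite product of standard generators, is invertible, so~$\Um{S}$ is a group; and if~$\Um{S}$ is a group, then each $\eps_S(a)$ is invertible, hence each $a\in S$ is invertible, so~$S$ is a groupoid. (This equivalence was already anticipated at the end of Remark~\ref{Rk:Cat2SE}.)

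Since this is genuinely a corollary, I do not expect any real obstacle: the case analysis that does the work already sits inside the proof of Lemma~\ref{L:PresCancella}, and what remains here is only the routine closure-under-products verification. The single point to watch is not to conflate the identities of~$S$, all of which are collapsed to~$1$ by~$\eps_S$, with~$1$ itself; but the invertibility clauses of Lemma~\ref{L:PresCancella} already subsume those degenerate cases, so even this is a non-issue.
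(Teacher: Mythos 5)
Your proposal is correct and follows essentially the same route as the paper: the paper likewise derives the corollary immediately from Lemma~\ref{L:PresCancella}, using that every element of~$\Um{S}$ is a finite product of elements of the range of~$\eps_S$ and that left cancellability, right cancellability, and invertibility are preserved under finite products. Your explicit treatment of the groupoid clause and the remark about identities collapsing to~$1$ are just slightly more detailed versions of what the paper leaves implicit.
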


\section{Gcd-categories and gcd-monoids}
\label{S:divetgcd}

In this section we extend, to an arbitrary category, Dehornoy's definition of a gcd-monoid, and we prove, in the same spirit as in Section~\ref{S:UmScanc}, that a category is a gcd-category if{f} its universal monoid is a gcd-monoid.

\begin{definition}\label{D:diveS}
Every category~$S$ carries two partial preorderings~$\dive_S$ and~$\divet_S$, defined by letting $a\dive_Sb$ (resp., $a\divet_Sb$) hold if there exists $x\in S$ such that $b=ax$ (resp., $b=xa$) --- we say that~$a$ \emph{left divides} (resp., \emph{right divides})~$b$, or, equivalently, that~$b$ is a \emph{right multiple} (resp., \emph{left multiple}) of~$a$.
If such an~$x$ is unique, then we write $x=b\cp_Sa$ (resp., $x=b\cpt_Sa$).
We will call~$\dive_S$ (resp., $\divet_S$) the \emph{left divisibility preordering} (resp., the \emph{right divisibility preordering}) of~$S$.

Following tradition, the greatest lower bound of a subset~$X$ of~$S$ if it exists, with respect to~$\dive_S$ (resp., $\divet_S$), will be called the \emph{left gcd} (resp., \emph{right gcd}) of~$X$ and denoted by $\Gcd_SX$ (resp., $\Gcdt_SX$).
Likewise, the least upper bound of a subset~$X$ of~$S$ if it exists, with respect to~$\dive_S$ (resp., $\divet_S$), will be called the \emph{right lcm} (resp., \emph{left lcm}) of~$X$ and denoted by $\Lcm_SX$ (resp., $\Lcmt_SX$).
If $X=\set{a,b}$, we will write $a\gcd_Sb$, $a\gcdt_Sb$, $a\lcm_Sb$, $a\lcmt_Sb$, respectively.

As usual, we drop the index~$S$ in the notations above in case~$S$ is understood.
\end{definition}

Observe, in particular, that an element~$a\in S$ is right invertible (resp., left invertible) if{f} $a\dive_S\sr{a}$ (resp., $a\divet_S\tg{a}$).

\begin{definition}\label{D:Delta01}
For a reduced sequence $\bx=(x_1,\dots,x_n)\in\Um{S}$, with $n>0$, the \emph{left component}~$\SR{\bx}$ and the \emph{right component}~$\TG{\bx}$ of~$\bx$ are the elements of~$S$ respectively defined as $\SR{\bx}=x_1$ and $\TG{\bx}=x_n$.
\end{definition}

Although it is tempting to extend Definition~\ref{D:Delta01} by setting $\SR{1}=\TG{1}=\one$, a new ``unit'' element, this element~$\one$ would need to live outside the category~$S$, and the new structure $S\cup\set{\one}$ would usually no longer be a category, which could open the way to a whole collection of incidents.
We therefore chose to restrict the domain of definition of both~$\SRn$ and~$\TGn$ to $\Um{S}\setminus\set{1}$.

The following result states that the component maps~$\SRn$ and~$\TGn$ are right adjoints of the canonical map $\eps_S\colon S\to\Um{S}$ with respect to the divisibility preorderings on~$S$ and~$\Um{S}$.

\begin{lemma}\label{L:AdjepsDelta}
Let~$S$ be a category and let $a\in S$.
Set $M=\Um{S}$.
Then $\eps_S(a)\dive_M1$ if{f} $a\dive_S\sr{a}$ and $\eps_S(a)\divet_M1$ if{f} $a\divet_S\tg{a}$.
Furthermore,
 \begin{align}
 \text{if }a\text{ is not right invertible, then}\qquad
 \eps_S(a)\dive_M\bb\quad&\text{if{f}}\quad
 a\dive_S\SR{\bb}\,,\label{Eq:AdjepsDelta}\\
 \text{if }a\text{ is not left invertible, then}\qquad
 \eps_S(a)\divet_M\bb\quad&\text{if{f}}\quad
 a\divet_S\TG{\bb}\,,\label{Eq:AdjepsDeltat} 
 \end{align}
for any $\bb\in M\setminus\set{1}$.
\end{lemma}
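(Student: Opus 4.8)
The plan is to handle the four equivalences by exploiting the explicit description of multiplication in $\Um{S}$ given in~\eqref{Eq:MultSeqES}, together with the characterization of the kernel of~$\eps_S$ from Lemma~\ref{L:KerofepsS}. First I would dispose of the two statements about divisibility of~$1$. For $\eps_S(a)\dive_M 1$: if $\eps_S(a)\bx=1$ for some $\bx\in M$, then $\eps_S(a)$ is right invertible in~$M$, hence by Lemma~\ref{L:PresCancella}\eqref{rightinv} $a$ is right invertible in~$S$, which by the remark following Definition~\ref{D:diveS} means exactly $a\dive_S\sr{a}$; the converse is immediate since $as=\sr{a}$ gives $\eps_S(a)\eps_S(s)=\eps_S(\sr a)=1$. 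The statement $\eps_S(a)\divet_M 1$ iff $a\divet_S\tg{a}$ is the left-right mirror image, using Lemma~\ref{L:PresCancella}\eqref{leftinv} instead.

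Next I would prove~\eqref{Eq:AdjepsDelta}, assuming $a$ is not right invertible and fixing $\bb=(b_1,\dots,b_n)\in M\setminus\set{1}$, so $n\geq 1$ and $\SR{\bb}=b_1$. For the direction ($\Leftarrow$): if $a\dive_S b_1$, write $b_1=ac$ with $c\in S$. Then in $\Um{S}$ one computes $\eps_S(a)\cdot\bigl((c)\conc(b_2,\dots,b_n)\bigr)$; since $ac=b_1$ is defined, the first case of the multiplication merges $a$ and $c$, and whatever reductions occur afterwards, one checks that the product equals $\bb$ (if $c$ is not an identity the merged entry is $b_1$ directly; if $c=\tg a$ then the factor is $(b_2,\dots,b_n)$ or $1$, but then $b_1=a$, so one multiplies instead by $\bb$ itself written with leading entry split off — more cleanly, $b_1=a\tg a$ forces $a=b_1$, so take $\bx=(b_2,\dots,b_n)$ and note $\eps_S(b_1)\bx=\bb$). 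Hence $\eps_S(a)\dive_M\bb$. For the direction ($\Rightarrow$): suppose $\eps_S(a)\bx=\bb$ for some reduced $\bx=(x_1,\dots,x_m)$. By the case analysis already carried out in the proof of Lemma~\ref{L:PresCancella} (the same ``either $\eps_S(a)\bx=(a,x_1,\dots,x_m)$ or $\eps_S(a)\bx=(ax_1,x_2,\dots,x_m)$'' dichotomy, the third ``identity'' case being excluded precisely because $a$ is not right invertible), the first entry of $\eps_S(a)\bx$ is either $a$ or $ax_1$. Since that first entry must be $b_1$, in the first sub-case $b_1=a$, whence $a\dive_S b_1$ trivially; in the second sub-case $b_1=ax_1$, whence $a\dive_S b_1$. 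In both cases $a\dive_S\SR{\bb}$.

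Finally,~\eqref{Eq:AdjepsDeltat} follows from~\eqref{Eq:AdjepsDelta} by the usual left-right symmetry: applying the already-proven statement to the opposite category $S^{\op}$ (whose universal monoid is $\Um{S}^{\op}$, with $\SRn$ and $\TGn$ interchanged and $\dive$ replaced by $\divet$), or, if one prefers to avoid invoking $S^{\op}$, by repeating the argument verbatim with sources and targets, left and right, swapped throughout. I do not expect any genuine obstacle here; the only point requiring a little care is the degenerate sub-case in the ($\Leftarrow$) direction of~\eqref{Eq:AdjepsDelta} where the witness $c$ for $a\dive_S b_1$ turns out to be an identity, which must be handled so as not to produce a non-reduced sequence — and this is exactly why the hypothesis that $a$ is not right invertible is needed to keep the book-keeping clean, mirroring its role in Lemma~\ref{L:PresCancella}.
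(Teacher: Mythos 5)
Your proposal is correct and follows essentially the same route as the paper: reduce to the $\dive$ statements by symmetry, dispatch the $\dive_M 1$ case via Lemma~\ref{L:PresCancella}\eqref{rightinv}, and for~\eqref{Eq:AdjepsDelta} use the two-case dichotomy on whether $ax_1$ is defined, with non-right-invertibility of~$a$ excluding the identity case. The only difference is cosmetic: for the right-to-left implication the paper simply observes $\eps_S(a)\dive_M\eps_S(\SR{\bb})\dive_M\bb$, which avoids the bookkeeping around a possibly-identity witness~$c$ that you handle by hand.
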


\begin{proof}
It is sufficient to prove the statements about~$\dive$.
The equivalence between $\eps_S(a)\dive_M1$ and $a\dive_S\sr{a}$ follows from Lemma~\ref{L:PresCancella}\eqref{rightinv}.
Now let $a\ndive_S\sr{a}$ and $\bb\in M\setminus\set{1}$, we need to prove that $\eps_S(a)\dive_M\bb$ if{f} $a\dive_S\SR{\bb}$.
The implication from the right to the left is obvious since then, $\eps_S(a)\dive_M\eps_S(\SR{\bb})\dive_M\bb$.
Suppose, conversely, that $\eps_S(a)\dive_M\bb$, we need to prove that $a\dive_S\SR{\bb}$.
Let $\bc=(c_1,\dots,c_n)\in M$ such that $\bb=\eps_S(a)\bc$.
Since~$a$ is not right invertible, $ac_1$ is not an identity (if $n>0$ and $ac_1\downarrow$), thus we get
 \[
 \bb=\eps_S(a)\bc=\begin{cases}
 (a,c_1,\dots,c_n)\,,&\text{if }n=0\text{ or }
 ac_1\uparrow\,,\\
 (ac_1,c_2,\dots,c_n)\,,&\text{if }n>0\text{ and }
 ac_1\downarrow\,.
 \end{cases}
 \]
In the first case, $\SR{\bb}=a$.
In the second case, $\SR{\bb}=ac_1$.
In both cases, we get $a\dive_S\SR{\bb}$.
\end{proof}

For an interpretation of Lemma~\ref{L:AdjepsDelta} in terms of normal forms and Garside families, see Remark~\ref{Rk:AdjepsDelta}.

\begin{lemma}\label{L:Cancell}
Let~$S$ be a conical category.
If~$S$ is left \pup{resp., right} cancellative, then the left \pup{resp., right} divisibility preordering of~$S$ is a partial ordering.
\end{lemma}

\begin{proof}
We prove the statement about the left divisibility preordering~$\dive_S$ of~$S$.
Let $a\dive_Sb\dive_Sa$, where $a,b\in S$.
There are $x,y\in S$ such that $b=ax$ and $a=by$; whence $a\tg{a}=a=axy$.
Since~$S$ is left cancellative, $\tg{a}=xy$.
Since~$S$ is conical, $x=y=\tg{a}$, so $a=b$.
\end{proof}

Although the conicality assumption cannot be weakened in Lemma~\ref{L:Cancell}, the cancellativity assumption can.
For example, defining~$S$ as the category with three objects~$e_0$, $e_1$, $e_2$ and arrows~$a$, $b$ with $e_0\stackrel{a,b}{\to}e_1$ and $e_1\stackrel{c}{\to}e_2$ with $ac=bc$, both left and right divisibility preorderings of~$S$ are orderings, while~$S$ is not left cancellative.

Say that a finite sequence $\ba=(a_1,\dots,a_m)$ is a \emph{prefix} (resp., \emph{suffix}) of a finite sequence $\bb=(b_1,\dots,b_n)$ if $m\leq n$ and $a_i=b_i$ (resp., $a_i=b_{n-m+i}$) whenever $1\leq i\leq m$.
It is trivial that $\dive_{\Um{S}}$ (resp., $\divet_{\Um{S}}$) extends the prefix ordering (resp., suffix ordering) on~$\Um{S}$.
As the following lemma shows, more is true.

%
%
%

\begin{lemma}\label{L:DivRedSeq}
Let~$S$ be a conical category and set $M=\Um{S}$.
The following statements hold, for all $\ba,\bb\in\Um{S}$:
\begin{enumerater}
\item\label{a1mdiveb1n}
$\ba\dive_M\bb$ if{f} either~$\ba$ is a prefix of~$\bb$ or there are $\ba',\bb'\in\Um{S}$ and $u,v\in S$ such that $\ba=\ba'\conc(u)$, $\bb=\ba'\conc(v)\conc\bb'$, and $u\dive_Sv$.

\item\label{a1mdivetb1n}
$\ba\divet_M\bb$ if{f} either~$\ba$ is a suffix of~$\bb$ or there are $\ba',\bb'\in\Um{S}$ and $u,v\in S$ such that $\ba=(u)\conc\ba'$, $\bb=\bb'\conc(v)\conc\ba'$, and $u\divet_Sv$.

\end{enumerater}
\end{lemma}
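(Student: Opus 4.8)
The plan is to establish~\eqref{a1mdiveb1n}; statement~\eqref{a1mdivetb1n} then follows by the obvious left--right symmetry, i.e.\ by applying~\eqref{a1mdiveb1n} to the opposite category~$S^\op$, whose reduced sequences are the reversals of those of~$S$ and for which left divisibility, prefixes, and the multiplication of $\Um{S^\op}\cong\Um{S}^\op$ correspond, respectively, to right divisibility, suffixes, and the multiplication of~$\Um{S}$.

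For the ``if'' direction of~\eqref{a1mdiveb1n}: if $\ba$ is a prefix of~$\bb$, then $\ba\dive_M\bb$ by the observation preceding the lemma. Otherwise, write $\ba=\ba'\conc(u)$ and $\bb=\ba'\conc(v)\conc\bb'$ with $v=uw$ for some $w\in S$. Using that $\eps_S$ is a functor, together with the description $\bx\cdot_S\by=\red{(\bx\conc\by)}$ of the multiplication of~$\Um{S}$, I would compute in~$M$ that $\ba\cdot\bigl(\eps_S(w)\cdot\bb'\bigr)=\ba'\cdot\eps_S(u)\cdot\eps_S(w)\cdot\bb'=\ba'\cdot\eps_S(v)\cdot\bb'=\bb$; the only thing to notice is that the sequences $\ba'\conc(u)=\ba$, $\ba'\conc(v)$, and $\ba'\conc(v)\conc\bb'=\bb$ are already reduced, being (sub)sequences of the reduced sequences $\ba$ and~$\bb$, so that these products trigger no further reduction. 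Hence $\ba\dive_M\bb$.

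For the ``only if'' direction, suppose $\bb=\ba\cdot_M\bc$ with $\ba=(a_1,\dots,a_m)$ and $\bc=(c_1,\dots,c_k)$ reduced; if $m=0$ or $k=0$ then $\ba$ is a prefix of~$\bb$, so we may assume $m,k\geq1$. The crux is that, since $\ba$ and $\bc$ are each reduced, the concatenation $\ba\conc\bc$ has no identity entries and its only consecutive pair with a possibly defined product is $(a_m,c_1)$; hence $\red{(\ba\conc\bc)}$, which equals~$\bb$, is reached in at most one reduction step. If $a_mc_1\uparrow$, then $\ba\conc\bc$ is reduced, so $\bb=\ba\conc\bc$ and $\ba$ is a prefix of~$\bb$. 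If $a_mc_1\downarrow$, then conicality, applied to $a_m\notin\Idt S$, forbids $a_mc_1$ from being an identity, so $\ba\conc\bc$ reduces in one step to $\bd\eqdef(a_1,\dots,a_{m-1},a_mc_1,c_2,\dots,c_k)$; one checks that $\bd$ is reduced using the semicategory axiom (for instance, $a_{m-1}(a_mc_1)\downarrow$ would force $a_{m-1}a_m\downarrow$, contradicting that $\ba$ is reduced, and symmetrically $(a_mc_1)c_2\downarrow$ would force $c_1c_2\downarrow$). Thus $\bb=\bd$, and setting $\ba'=(a_1,\dots,a_{m-1})$, $u=a_m$, $v=a_mc_1$, and $\bb'=(c_2,\dots,c_k)$ yields $\ba=\ba'\conc(u)$, $\bb=\ba'\conc(v)\conc\bb'$, and $u\dive_Sv$ (with witness~$c_1$), as wanted.

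The only delicate point is the bookkeeping in this last case: verifying that the unique sequence obtained from $\ba\conc\bc$ is again reduced --- which is exactly where the clause ``definedness is associative'' in the definition of a semicategory is used --- and checking that the degenerate situations $m=0$, $k=0$, and ``$a_mc_1$ is an identity'' are all absorbed into the ``$\ba$ is a prefix of~$\bb$'' alternative, the exclusion of the last of these being precisely what conicality buys us. I anticipate no genuine difficulty beyond this.
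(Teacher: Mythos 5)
Your proof is correct and takes essentially the same route as the paper's: the forward direction splits into the two cases $a_mc_1\uparrow$ and $a_mc_1\downarrow$ exactly as in the paper (which delegates the one-step-reduction bookkeeping to Lemma~\ref{L:Mult4conical} rather than redoing it by hand), conicality rules out the identity case, and the converse and the symmetry reduction of~(ii) to~(i) match the paper's sketch.
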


\begin{proof}
By symmetry, it is sufficient to prove~\eqref{a1mdiveb1n}.
Write $\ba=(a_1,\dots,a_m)$ and $\bb=(b_1,\dots,b_n)$.

Suppose that $\ba\dive_M\bb$ and let $\bx\in M$ such that $\bb=\ba\bx$.
We establish the condition stated in the right hand side of~\eqref{a1mdiveb1n}.
Write $\bx=(x_1,\dots,x_k)$ (a reduced sequence).
It follows from Corollary~\ref{C:Ineq4ell} that $m\leq n$.
Furthermore, by Lemma~\ref{L:Mult4conical}, there are two cases to consider.

\subsection*{Case 1}
$a_mx_1$ is undefined.
Then $\bb=\ba\conc\bx$, so $\ba$ is a prefix of~$\bb$ and the desired conclusion holds.

\subsection*{Case 2}
$a_mx_1$ is defined.
The desired conclusion holds with $\ba'=(a_1,\dots,a_{m-1})$, $\bb'=(x_2,\dots,x_k)$, $u=a_m$, and $v=a_mx_1$.

Following the proofs above in reverse yields easily the desired equivalence.
\end{proof}

The following technical lemma states in which case a left or right gcd exists in~$\Um{S}$, along with an algorithm enabling us to compute that gcd.

\begin{lemma}\label{L:meetUmS}
Let~$S$ be a conical category, set $M=\Um{S}$, let $\vecm{\ba_i}{i\in I}$ be a nonempty family of elements in~$\Um{S}$, and let $\bc\in M$.
We denote by~$\ba_{\mathrm{left}}$ \pup{resp., $\ba_{\mathrm{right}}$} the longest common prefix \pup{resp., suffix} of the~$\ba_i$.
Then the following statements hold:

\begin{enumerater}
\item\label{cdiveMallai}
$\bc\dive_M\ba_i$ for all~$i\in I$ if{f} either $\bc\dive_M\ba_{\mathrm{left}}$, or $\ba_{\mathrm{left}}\neq\ba_i$ for each~$i\in I$ and there is $c\in S$ such that $\bc=\ba_{\mathrm{left}}\conc(c)$ and $c\dive_S\SR{(\ba_i\cp_M\ba_{\mathrm{left}})}$ for each $i\in I$.

\item\label{cgcdMallai}
Suppose that~$S$ is left cancellative.
Then~$\bc$ is the left gcd of $\setm{\ba_i}{i\in I}$ in~$M$ if{f} one of the following cases occurs:
\begin{enumeratei}
\item\label{baleftcase}
$\bc=\ba_{\mathrm{left}}$, and either $\ba_{\mathrm{left}}=\ba_i$ for some~$i$, or $\setm{\sr{\SR{(\ba_i\cp_M\ba_{\mathrm{left}}})}}{i\in I}$ is not a singleton, or the left gcd of $\setm{\SR{(\ba_i\cp_M\ba_{\mathrm{left}})}}{i\in I}$ is an identity;

\item\label{baleft+case}
$\ba_i\neq\ba_{\mathrm{left}}$ for all~$i$, the left gcd~$c$ of $\setm{\SR{(\ba_i\cp_M\ba_{\mathrm{left}})}}{i\in I}$ exists in~$S$ and it is not an identity, and $\bc=\ba_{\mathrm{left}}\conc(c)$.
\end{enumeratei}

\item\label{cdivetMallai}
$\bc\divet_M\ba_i$ for all~$i\in I$ if{f} either $\bc\divet_M\ba_{\mathrm{right}}$, or $\ba_{\mathrm{right}}\neq\ba_i$ for each~$i\in I$ and there is $c\in S$ such that $\bc=(c)\conc\ba_{\mathrm{right}}$ and $c\divet_S\TG{(\ba_i\cpt_M\ba_{\mathrm{right}})}$ for each $i\in I$.

\item\label{cgcdtMallai}
Suppose that~$S$ is right cancellative.
Then~$\bc$ is the right gcd of $\setm{\ba_i}{i\in I}$ in~$M$ if{f} one of the following cases occurs:
\begin{enumeratei}
\item\label{barightcase}
$\bc=\ba_{\mathrm{right}}$ and either $\ba_{\mathrm{right}}=\ba_i$ for some~$i$ or $\setm{\tg{\TG{(\ba_i\cpt_M\ba_{\mathrm{right}}})}}{i\in I}$ is not a singleton, or the right gcd of $\setm{\TG{(\ba_i\cpt_M\ba_{\mathrm{right}})}}{i\in I}$ is an identity;

\item\label{baright+case}
$\ba_i\neq\ba_{\mathrm{right}}$ for all~$i$, the greatest lower bound~$c$ of $\setm{\TG{(\ba_i\cpt_M\ba_{\mathrm{right}})}}{i\in I}$ exists in~$S$ and it is not an identity, and $\bc=(c)\conc\ba_{\mathrm{right}}$.
\end{enumeratei}

\end{enumerater}
\end{lemma}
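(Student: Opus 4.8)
The plan is to reduce everything to statements~\eqref{cdiveMallai} and~\eqref{cgcdMallai}, since~\eqref{cdivetMallai} and~\eqref{cgcdtMallai} follow from them by the left--right symmetry: passing to the opposite category turns $\dive_S$ into $\divet_S$, prefixes into suffixes, $\SRn$ into $\TGn$, $\cp_M$ into $\cpt_M$, and left cancellativity into right cancellativity. Throughout I would write $\bb_i=\ba_i\cp_M\ba_{\mathrm{left}}$ for the tail of $\ba_i$ past the common prefix $\ba_{\mathrm{left}}$, so that $\ba_i=\ba_{\mathrm{left}}\conc\bb_i$; when $\ba_i\neq\ba_{\mathrm{left}}$ the sequence $\bb_i$ is nonempty with first entry $\SR{\bb_i}$. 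The two recurring tools are the product formula of Lemma~\ref{L:Mult4conical} and the divisibility criterion of Lemma~\ref{L:DivRedSeq}\eqref{a1mdiveb1n}.

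For~\eqref{cdiveMallai}, the backward direction is a direct computation: if $\bc\dive_M\ba_{\mathrm{left}}$ then $\bc\dive_M\ba_{\mathrm{left}}\dive_M\ba_i$, and if $\bc=\ba_{\mathrm{left}}\conc(c)$ with $c\dive_S\SR{\bb_i}$, then writing $\SR{\bb_i}=c\cdot e$ and multiplying $\bc$ on the right by $(e)$ followed by the tail of $\bb_i$ --- still a reduced sequence, since $S$ is conical and $\ba_i$ is reduced --- recovers $\ba_i$ via Lemma~\ref{L:Mult4conical}. For the forward direction I would write $\bc=(c_1,\dots,c_p)$, the case $p=0$ being trivial. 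Applying Lemma~\ref{L:DivRedSeq}\eqref{a1mdiveb1n} to each $\bc\dive_M\ba_i$ shows, in both of its alternatives, that $(c_1,\dots,c_{p-1})$ is a prefix of $\ba_i$, hence of $\ba_{\mathrm{left}}$. If $p\leq\lh(\ba_{\mathrm{left}})$, comparing $c_p$ with the $p$-th entry of $\ba_{\mathrm{left}}$ (which is the $p$-th entry of every $\ba_i$) and invoking the same criterion once more yields $\bc\dive_M\ba_{\mathrm{left}}$. If $p=\lh(\ba_{\mathrm{left}})+1$, then $\bc=\ba_{\mathrm{left}}\conc(c_p)$; if some $\ba_i=\ba_{\mathrm{left}}$ then again $\bc\dive_M\ba_{\mathrm{left}}$, and otherwise the criterion forces $c_p\dive_S\SR{\bb_i}$ for each $i$, which is the second alternative.

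For~\eqref{cgcdMallai}, I would first note that by Corollary~\ref{C:PresCancella}, Proposition~\ref{P:PresConical} and Lemma~\ref{L:Cancell} both $\dive_M$ and $\dive_S$ are partial orderings (so left gcds are unique), and that $\ba_{\mathrm{left}}$ is always a lower bound of $\setm{\ba_i}{i\in I}$. Two facts, both immediate from Lemma~\ref{L:Mult4conical} and left cancellativity, drive the argument: anything left dividing $\ba_{\mathrm{left}}$ also left divides $\ba_{\mathrm{left}}\conc(c)$; and $\ba_{\mathrm{left}}\conc(c')\dive_M\ba_{\mathrm{left}}\conc(c)$ if{f} $c'\dive_S c$. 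If $\bc$ is the left gcd, then by~\eqref{cdiveMallai} it is either $\ba_{\mathrm{left}}$ or of the form $\ba_{\mathrm{left}}\conc(c)$; comparing it, through those two facts, with the competing lower bounds described in~\eqref{cdiveMallai} shows, in the second case, that $c$ is the left gcd in $S$ of $\setm{\SR{\bb_i}}{i\in I}$ and is not an identity --- that is, case~\eqref{baleft+case} --- and, in the first case, that there is no non-identity common left divisor in $S$ of the $\SR{\bb_i}$ unless some $\ba_i=\ba_{\mathrm{left}}$. The converse implications run this argument in reverse. The delicate point is to rewrite this last condition as the disjunction of case~\eqref{baleftcase}: here I would use that a common left divisor $c'$ of the $\SR{\bb_i}$ forces all sources $\sr{\SR{\bb_i}}$ to equal $\sr{c'}$; that when these sources agree, their common value $e\in\Idt S$ is already a common left divisor; and that by conicality any element left dividing an identity is that identity (it is right invertible, hence, $S$ being conical, an identity). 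Consequently ``there is no non-identity common left divisor'' is equivalent to ``the sources $\sr{\SR{\bb_i}}$ do not all agree, or the left gcd of the $\SR{\bb_i}$ exists in $S$ and is an identity'', which is precisely~\eqref{baleftcase}.

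The step I expect to be the main obstacle is exactly that last bookkeeping in~\eqref{cgcdMallai}: passing from the clean order-theoretic statement ``$\ba_{\mathrm{left}}$ is the greatest lower bound of the $\ba_i$'' to the explicit three-way disjunction of~\eqref{baleftcase}. Everything there rests on conicality --- which is what makes ``the left gcd of the $\SR{\bb_i}$ is an identity'' genuinely equivalent to ``there is no proper common left divisor'' --- together with the correspondence $c'\mapsto\ba_{\mathrm{left}}\conc(c')$ between common left divisors in $S$ of the $\SR{\bb_i}$ and lower bounds of the $\ba_i$ lying strictly above $\ba_{\mathrm{left}}$. The remaining manipulations of reduced sequences are routine, governed by Lemmas~\ref{L:Mult4conical} and~\ref{L:DivRedSeq}.
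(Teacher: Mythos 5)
Your proposal is correct and follows essentially the same route as the paper: part~(\ref{cdiveMallai}) via Lemma~\ref{L:DivRedSeq} with the same two-case split on whether the all-but-last prefix of $\bc$ falls strictly inside $\ba_{\mathrm{left}}$ or equals it, and part~(\ref{cgcdMallai}) by identifying the lower bounds of $\setm{\ba_i}{i\in I}$ from part~(\ref{cdiveMallai}) and locating the greatest one, with the translation of ``no non-identity common left divisor of the $\SR{(\ba_i\cp_M\ba_{\mathrm{left}})}$'' into the disjunction of case~(\ref{baleftcase}) handled via conicality exactly as the paper does (indeed a bit more explicitly).
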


\begin{proof}
By symmetry, it suffices to prove~\eqref{cdiveMallai} and~\eqref{cgcdMallai}.
Since $\ba_{\mathrm{left}}\dive_M\ba_i$ for all~$i\in I$, $\bc\dive_M\ba_{\mathrm{left}}$ implies that $\bc\dive_M\ba_i$ for all $i\in I$.
Now suppose that $\ba\neq\ba_i$ for all~$i\in I$, and let $c\in S$ such that $\bc=\ba_{\mathrm{left}}\conc(c)$ and $c\dive_S\SR{(\ba_i\cp_M\ba_{\mathrm{left}})}$ for each $i\in I$.
Then
 \[
 \bc=\ba_{\mathrm{left}}\cdot\eps_S(c)\dive_M
 \ba_{\mathrm{left}}\cdot
 (\ba_i\cp_M\ba_{\mathrm{left}})
 =\ba_i\,,\quad\text{for all }i\in I\,.
 \]
Suppose, conversely, that $\bc\dive_M\ba_i$ for all $i\in I$.
We prove that the condition given on the right hand side of~\eqref{cdiveMallai} holds for~$\bc$.
We may assume that $\bc\neq1$, so $\bc=\bc_*\conc(c)$ for some $\bc_*\in M$ and some $c\in S\setminus\Idt S$.
Since~$S$ is conical and by Lemma~\ref{L:DivRedSeq}, $\ell(\bc)\leq\ell(\ba_i)$ for all $i\in I$ and~$\bc_*$ is a proper prefix of all~$\ba_i$, thus also of~$\ba_{\mathrm{left}}$.
Write $\ba_i=\bc_*\conc(a_{i,1},\dots,a_{i,n_i})$, for a reduced sequence $(a_{i,1},\dots,a_{i,n_i})$ with $n_i>0$.
Since $\bc=\bc_*\conc(c)\dive_M\ba_i$, it follows from Lemma~\ref{L:DivRedSeq} that
 \begin{equation}\label{Eq:cleqai1}
 c\dive_Sa_{i,1}\quad\text{for all }i\in I\,.
 \end{equation}
Suppose first that~$\bc_*$ is a proper prefix of $\ba_{\mathrm{left}}$.
Then the~$a_{i,1}$ are all equal to the entry of~$\ba_{\mathrm{left}}$ following~$\bc_*$, so they form a constant family, thus, by~\eqref{Eq:cleqai1} together with Lemma~\ref{L:DivRedSeq}, we get $\bc\dive_M\ba_{\mathrm{left}}$.

The only remaining possibility is $\bc_*=\ba_{\mathrm{left}}$, so (since each $n_i>0$) $\ba_{\mathrm{left}}$ is a proper prefix of each~$\ba_i$.
Since each $\ba_i=\ba_{\mathrm{left}}\conc(a_{i,1},\dots,a_{i,n_i})$, we get $a_{i,1}=\SR{(\ba_i\cp_M\ba_{\mathrm{left}})}$, and thus, by~\eqref{Eq:cleqai1}, $c\dive_S\SR{(\ba_i\cp_M\ba_{\mathrm{left}})}$.
This completes the proof of~\eqref{cdiveMallai}.

Now we prove~\eqref{cgcdMallai}.
The additional assumption that~$S$ be left cancellative, together with Proposition~\ref{P:PresConical} and Corollary~\ref{C:PresCancella}, ensures that~$M$ is also conical and left cancellative.
By Lemma~\ref{L:Cancell}, it follows that~$\dive_M$ is a partial ordering.

It follows from~\eqref{cdiveMallai} that the left gcd of $\setm{\ba_i}{i\in I}$, if it exists, is either equal to~$\ba_{\mathrm{left}}$ or has the form $\ba_{\mathrm{left}}\conc(c)$ for some~$c$.
The case where $\ba_{\mathrm{left}}=\ba_i$, for some~$i$, being trivial, we may assume that~$\ba_{\mathrm{left}}$ is a proper prefix of each~$\ba_i$.
In that case, by~\eqref{cdiveMallai}, the gcd of $\setm{\ba_i}{i\in I}$ is equal to~$\ba_{\mathrm{left}}$ if{f} either the set $\setm{\SR{(\ba_i\cp_M\ba_{\mathrm{left}})}}{i\in I}$ has no left divisor, that is,  $\setm{\sr{\SR{(\ba_i\cp_M\ba_{\mathrm{left}}})}}{i\in I}$ is not a singleton, or the only left divisor of the set $\setm{\SR{(\ba_i\cp_M\ba_{\mathrm{left}})}}{i\in I}$ is an identity.
Again by~\eqref{cdiveMallai}, the gcd of $\setm{\ba_i}{i\in I}$ exists and is not equal to~$\ba_{\mathrm{left}}$ if{f} it has the form $\ba_{\mathrm{left}}\conc(c)$ where~$c$ is the left gcd of $\setm{\SR{(\ba_i\cp_M\ba_{\mathrm{left}})}}{i\in I}$ in~$S$ and~$c$ is not an identity.
\end{proof}

A direct application of Lemma~\ref{L:meetUmS} to one-entry sequences yields the following.

\begin{corollary}\label{C:meetUmS}
The following statements hold, for any conical category~$S$ and any $a,b\in S$:
\begin{enumerater}
\item\label{agcdb}
If~$S$ is left cancellative and $\sr{a}=\sr{b}$, then $\eps_S(a)\gcd\eps_S(b)$ exists in~$\Um{S}$ if{f} $a\gcd b$ exists in~$S$.
Furthermore, if this holds, then $\eps_S(a\gcd b)=\eps_S(a)\gcd\eps_S(b)$.

\item\label{agcdtb}
If~$S$ is right cancellative and $\tg{a}=\tg{b}$, then $\eps_S(a)\gcdt\eps_S(b)$ exists in~$\Um{S}$ if{f} $a\gcdt b$ exists in~$S$.
Furthermore, if this holds, then $\eps_S(a\gcdt b)=\eps_S(a)\gcdt\eps_S(b)$.
\end{enumerater}
\end{corollary}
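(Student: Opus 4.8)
The plan is to read off Corollary~\ref{C:meetUmS} from Lemma~\ref{L:meetUmS}, applied to the two-element family consisting of the one-entry reduced sequences $\ba_0=\eps_S(a)$ and $\ba_1=\eps_S(b)$. By the left--right symmetry running through Section~\ref{S:divetgcd} it is enough to prove~\eqref{agcdb}, and its hypotheses --- $S$ conical and left cancellative --- are exactly those of Lemma~\ref{L:meetUmS}\eqref{cgcdMallai}. First I would clear the degenerate cases. If $a$ or $b$ lies in $\Idt S$, then $\sr a=\sr b$ forces that identity to equal $\sr a=\sr b$, and conicality of~$S$ gives that any common left divisor of $a$ and $b$ in~$S$ equals~$\sr a$; hence $a\gcd b=\sr a$ exists in~$S$, while in the conical monoid $\Um{S}$ (Proposition~\ref{P:PresConical}) the only common left divisor of $\eps_S(a)$ and $\eps_S(b)$ is~$1$, so both $\eps_S(a)\gcd\eps_S(b)$ and $\eps_S(a\gcd b)$ equal~$1$. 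The case $a=b$ is trivial.

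So assume $a,b\in S\setminus\Idt S$ with $a\neq b$. Then, as reduced sequences, $\eps_S(a)=(a)$ and $\eps_S(b)=(b)$, so their longest common prefix $\ba_{\mathrm{left}}$ is the empty sequence~$1$. Since $\Um{S}$ is left cancellative (Corollary~\ref{C:PresCancella}), the quotient $\ba_i\cp_M\ba_{\mathrm{left}}$ is well defined and equals~$\ba_i$; hence $\SR{(\ba_0\cp_M\ba_{\mathrm{left}})}=a$ and $\SR{(\ba_1\cp_M\ba_{\mathrm{left}})}=b$. Moreover $\ba_{\mathrm{left}}\neq\ba_i$ for each $i$, and the set of sources of those two elements is the singleton~$\set{\sr a}$, since $\sr a=\sr b$. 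Feeding this into Lemma~\ref{L:meetUmS}\eqref{cgcdMallai}: in case~\eqref{baleftcase} the first alternative is excluded by $\ba_{\mathrm{left}}\neq\ba_i$ and the second by the singleton observation, so~\eqref{baleftcase} holds exactly when $a\gcd b$ exists in~$S$ and is an identity, with then $\eps_S(a)\gcd\eps_S(b)=\ba_{\mathrm{left}}=1$; and~\eqref{baleft+case} holds exactly when $a\gcd b$ exists in~$S$ and is not an identity, with then $\eps_S(a)\gcd\eps_S(b)=\ba_{\mathrm{left}}\conc(a\gcd b)=\eps_S(a\gcd b)$.

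Combining the two cases yields precisely the claim: $\eps_S(a)\gcd\eps_S(b)$ exists in~$\Um{S}$ if{f} $a\gcd b$ exists in~$S$, and then $\eps_S(a\gcd b)=\eps_S(a)\gcd\eps_S(b)$, uniformly --- in the identity case $\eps_S(a\gcd b)=1$ as well. Statement~\eqref{agcdtb} follows by the mirror argument, using parts~\eqref{cdivetMallai} and~\eqref{cgcdtMallai} of Lemma~\ref{L:meetUmS} and the right-cancellative half of Corollary~\ref{C:PresCancella}, and swapping prefix/suffix, $\dive$/$\divet$, $\cp_M$/$\cpt_M$, $\SRn$/$\TGn$ throughout. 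I foresee no real obstacle; the only point requiring care is matching the three-way disjunction inside case~\eqref{baleftcase} of Lemma~\ref{L:meetUmS}\eqref{cgcdMallai} against the hypothesis $\sr a=\sr b$ (together with the already-handled possibility $a=b$), which is what makes the statement collapse to the clean dichotomy ``$a\gcd b$ exists, or it does not''.
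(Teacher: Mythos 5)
Your proof is correct and follows exactly the route the paper takes: the paper's entire proof is the sentence ``A direct application of Lemma~\ref{L:meetUmS} to one-entry sequences yields the following,'' and your argument is precisely that application, with the degenerate cases and the matching of the alternatives in Lemma~\ref{L:meetUmS}\eqref{cgcdMallai} spelled out correctly.
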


\begin{definition}\label{D:LeftgcdCat}
A category~$S$ is a \emph{left gcd-category} if it is conical, left cancellative, and any elements $a,b\in S$ such that $\sr{a}=\sr{b}$ have a left gcd.
\emph{Right gcd-categories} are defined dually.
We say that~$S$ is a \emph{gcd-category} if it is simultaneously a left and right gcd-category.

If~$S$ is a monoid, then we say that~$S$ is a \emph{left gcd-monoid}, \emph{right gcd-monoid}, \emph{gcd-monoid}, respectively.
\end{definition}

The terminology introduced in Definition~\ref{D:LeftgcdCat} is consistent with the one of Dehornoy~\cite{Dis} (see Section~\ref{Su:BackHigh} for more background on gcd-monoids).
Gcd-monoids and gcd-categories are related by the following result.

\begin{theorem}\label{T:UmSgcdmon}
A category~$S$ is a left gcd-category \pup{right gcd-category, gcd-category, respectively} if{f} its universal monoid $\Um{S}$ is a left gcd-monoid \pup{right gcd-monoid, gcd-monoid, respectively}.
\end{theorem}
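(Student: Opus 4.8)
The plan is to derive the theorem by combining the category-to-monoid transfer results of the previous two sections with the explicit description of gcds in $\Um{S}$ furnished by Lemma~\ref{L:meetUmS}. By symmetry I would only prove the left-handed equivalence, ``$S$ is a left gcd-category iff $\Um{S}$ is a left gcd-monoid''; the right-handed one is dual, and the two-sided one is their conjunction. Write $M=\Um{S}$.

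First I would dispose of the clauses common to both implications: by Proposition~\ref{P:PresConical}, $S$ is conical iff $M$ is conical, and by Corollary~\ref{C:PresCancella}, $S$ is left cancellative iff $M$ is left cancellative. Thus, in each direction, one may assume that $S$ and $M$ are both conical and left cancellative, and what remains is to transfer the existence of left gcds of pairs.

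For the direction from $S$ to $M$, I would take $\ba_0,\ba_1\in M$, let $\ba_{\mathrm{left}}$ be their longest common prefix, and run through the cases of Lemma~\ref{L:meetUmS}\eqref{cgcdMallai} with $I=\set{0,1}$ to show that a left gcd exists. If $\ba_{\mathrm{left}}=\ba_i$ for some $i$, case~\eqref{baleftcase} holds with left gcd $\ba_{\mathrm{left}}$. Otherwise $\ba_{\mathrm{left}}$ is a proper prefix of each $\ba_i$, so the elements $\SR{(\ba_i\cp_M\ba_{\mathrm{left}})}$ of $S$ are defined; if $\setm{\sr{\SR{(\ba_i\cp_M\ba_{\mathrm{left}})}}}{i\in I}$ is not a singleton then case~\eqref{baleftcase} again applies, with left gcd $\ba_{\mathrm{left}}$. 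In the remaining case $\SR{(\ba_0\cp_M\ba_{\mathrm{left}})}$ and $\SR{(\ba_1\cp_M\ba_{\mathrm{left}})}$ have a common source, so the hypothesis that $S$ is a left gcd-category supplies a left gcd $c\in S$ of these two elements; if $c$ is an identity, case~\eqref{baleftcase} holds with left gcd $\ba_{\mathrm{left}}$, and if not, case~\eqref{baleft+case} holds with left gcd $\ba_{\mathrm{left}}\conc(c)$. In all cases a left gcd of $\set{\ba_0,\ba_1}$ exists, so $M$ is a left gcd-monoid.

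For the converse, I would take $a,b\in S$ with $\sr{a}=\sr{b}$; since $S$ is conical and left cancellative, Corollary~\ref{C:meetUmS}\eqref{agcdb} gives that $\eps_S(a)\gcd\eps_S(b)$ exists in $M$ iff $a\gcd b$ exists in $S$, and the former holds because $M$ is a left gcd-monoid. Hence $a\gcd b$ exists and $S$ is a left gcd-category. The only step with any content is the case analysis via Lemma~\ref{L:meetUmS}\eqref{cgcdMallai}, where one must check that under the hypotheses on $S$ one of its two cases necessarily occurs; I do not anticipate a genuine obstacle, as everything else is a direct appeal to the transfer lemmas.
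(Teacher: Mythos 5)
Your proposal is correct and follows essentially the same route as the paper: symmetry reduction to the left-handed case, transfer of conicality and left cancellativity via Proposition~\ref{P:PresConical} and Corollary~\ref{C:PresCancella}, and then the case analysis of Lemma~\ref{L:meetUmS}\eqref{cgcdMallai} (the paper applies the lemma directly to one-entry sequences where you invoke Corollary~\ref{C:meetUmS}, but that corollary is itself just this specialization). The only detail worth adding is the paper's explicit observation that $\ba_{\mathrm{left}}\conc(c)$ is a reduced sequence, which is needed for it to name an element of $\Um{S}$ in case~\eqref{baleft+case}.
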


\begin{proof}
It suffices to establish the statement for left gcd-categories and left gcd-monoids.
Set $M=\Um{S}$.

It follows from Proposition~\ref{P:PresConical} and Corollary~\ref{C:PresCancella} that~$S$ is conical and left cancellative if{f}~$M$ is conical and left cancellative.

Now suppose that~$M$ is a left gcd-monoid and let $a,b\in S$ such that $\sr{a}=\sr{b}$.
We must prove that $\set{a,b}$ has a left gcd in~$S$.
We may assume that $a\neq b$ and neither~$a$ nor~$b$ is an identity.
Hence~$(a)$ and~$(b)$ are both reduced sequences.
By assumption, they have a left gcd in~$M$.
Since they are distinct, their largest common prefix is the empty sequence, thus, as $\SR{(a)}=a$ and $\SR{(b)}=b$ and by Lemma~\ref{L:meetUmS}\eqref{cgcdMallai}, $a\gcd_Sb$ exists.

Let, conversely, $S$ be a left gcd-category and let $\vecm{\ba_i}{i\in I}$ be a nonempty finite family of elements of~$M$.
Denote by~$\ba_{\mathrm{left}}$ the largest common prefix of the~$\ba_i$.
We apply Lemma~\ref{L:meetUmS}\eqref{cgcdMallai}.
If either $\ba_i=\ba_{\mathrm{left}}$ for some~$i$, or $\setm{\sr{\SR{(\ba_i\cp_M\ba_{\mathrm{left}}})}}{i\in I}$ is not a singleton, or the only left divisor of $\setm{\SR{(\ba_i\cp_M\ba_{\mathrm{left}})}}{i\in I}$ is an identity, then the left gcd of the~$\ba_i$ is~$\ba_{\mathrm{left}}$.
Now suppose that each~$\ba_i$ properly extends~$\ba_{\mathrm{left}}$ and that $\setm{\SR{(\ba_i\cp_M\ba_{\mathrm{left}})}}{i\in I}$ has a non-identity left divisor.
By assumption and since~$I$ is finite nonempty, this means that $\setm{\SR{(\ba_i\cp_M\ba_{\mathrm{left}})}}{i\in I}$ has a non-identity left gcd, say~$c$, in~$S$.
Observe that the sequence $\ba_{\mathrm{left}}\conc(c)$ is reduced.
By Lemma~\ref{L:meetUmS}\eqref{cgcdMallai}, it follows that $\ba_{\mathrm{left}}\conc(c)$ is the left gcd of $\setm{\ba_i}{i\in I}$ in~$M$.
\end{proof}

The following result extends Corollary~\ref{C:meetUmS} to lcms.
Since we did not define left or right lcm-categories, our assumption (existence of both left and right gcds) needs to be a bit stronger (cf. Dehornoy \cite[Lemma~2.14]{Dis}).

\begin{proposition}\label{P:Sclslcm}
The following statements hold, for any gcd-category~$S$ and any $a,b\in S$:
\begin{enumerater}
\item\label{alcmb}
If $\sr{a}=\sr{b}$, then $\eps_S(a)\lcm\eps_S(b)$ exists in~$\Um{S}$ if{f}~$\eps_S(a)$ and~$\eps_S(b)$ have a common right multiple in~$\Um{S}$, if{f} $a\lcm b$ exists in~$S$, if{f}~$a$ and~$b$ have a common right multiple in~$S$.
Furthermore, if this holds, then $\eps_S(a\lcm b)=\eps_S(a)\lcm\eps_S(b)$.

\item\label{alcmtb}
If $\tg{a}=\tg{b}$, then $\eps_S(a)\lcmt\eps_S(b)$ exists in~$\Um{S}$ if{f}~$\eps_S(a)$ and~$\eps_S(b)$ have a common left multiple in~$\Um{S}$, if{f} $a\lcmt b$ exists in~$S$, if{f}~$a$ and~$b$ have a common left multiple in~$S$.
Furthermore, if this holds, then $\eps_S(a\lcmt b)=\eps_S(a)\lcmt\eps_S(b)$.
\end{enumerater}
\end{proposition}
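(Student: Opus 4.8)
The statement to prove is Proposition~\ref{P:Sclslcm}, relating lcms in $S$ and in $M=\Um{S}$. By symmetry it suffices to prove \eqref{alcmb}; and within \eqref{alcmb} the key non-trivial equivalence is the middle one: $a$ and $b$ have a common right multiple in $S$ if and only if $\eps_S(a)$ and $\eps_S(b)$ have a common right multiple in $M$, with the lcm (when it exists) being computed as $\eps_S(a\lcm_Sb)$. The trivial directions are: a common right multiple in $S$ maps under the functor $\eps_S$ to a common right multiple in $M$; and, in a gcd-monoid, two elements with a common right multiple have a right lcm (this is Dehornoy \cite[Lemma~2.14]{Dis}, which applies since $M$ is a gcd-monoid by Theorem~\ref{T:UmSgcdmon}), and likewise in the gcd-category $S$ two elements with the same source and a common right multiple have a right lcm. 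So the whole content is: \emph{push a common right multiple in $M$ back down to $S$}.

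\textbf{Key steps, in order.} First I would dispose of the degenerate cases: if $a=b$, or one of $a,b$ is an identity, the claim is immediate, so assume $a\neq b$ and neither is an identity; then $(a)$ and $(b)$ are reduced sequences with empty common prefix, and $\SR{(a)}=a$, $\SR{(b)}=b$. Also note that the hypothesis $\sr a=\sr b$, which is what makes a left gcd of $a,b$ meaningful in the category $S$, carries over: a common right multiple $a x = b y$ in $S$ forces $x$ and $y$ to exist with $\sr x=\tg a$, $\sr y=\tg b$, and forces $\sr a=\sr b$ automatically once such an equation holds, so there is no compatibility issue. Second, suppose $\eps_S(a)$ and $\eps_S(b)$ have a common right multiple $\bm\in M$, i.e.\ $\eps_S(a)\bx=\eps_S(b)\by=\bm$ for some $\bx,\by\in M$. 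I would analyze $\bm$ via Lemma~\ref{L:Mult4conical}/Lemma~\ref{L:DivRedSeq}: writing $\bx=(x_1,\dots,x_k)$, either $ax_1\uparrow$ and $\bm=(a,x_1,\dots,x_k)$, so $\SR{\bm}=a$; or $ax_1\downarrow$ (and, since $a$ is not an identity and $S$ is conical — unless $a$ is right invertible, an easily handled special case — $ax_1\notin\Idt S$) and $\bm=(ax_1,x_2,\dots,x_k)$, so $\SR{\bm}=ax_1$, whence $a\dive_S\SR{\bm}$. Symmetrically $b\dive_S\SR{\bm}$. Thus, setting $c=\SR{\bm}\in S$ (which lies in $S$ since $\bm\neq1$ because $\eps_S(a)\dive_M\bm$ and $a$ is not an identity, excepting the right-invertible case), we have produced a common right multiple $c$ of $a$ and $b$ \emph{in $S$} with $\sr c=\sr a=\sr b$. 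Third, invoke that $S$ is a gcd-category together with Dehornoy \cite[Lemma~2.14]{Dis} applied inside $S$: elements with the same source possessing a common right multiple have a right lcm $a\lcm_Sb$; then apply $\eps_S$ and use Corollary~\ref{C:meetUmS} together with the already-established fact that $\eps_S$ preserves and reflects left gcds of same-source pairs, plus the standard duality between lcms and gcds, to conclude $\eps_S(a\lcm_Sb)=\eps_S(a)\lcm_M\eps_S(b)$.

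\textbf{Where the work concentrates.} The only real obstacle is the "push back down" step — step two — and specifically the bookkeeping around invertible elements and the empty sequence, which is exactly the kind of thing the $\SRn$, $\TGn$ machinery of Section~\ref{S:divetgcd} was built to handle. If $a$ (or $b$) is right invertible in $S$, then $\eps_S(a)\dive_M1$ and the common right multiple condition is vacuous on that side; but then $a\dive_S\sr a\dive_S$ everything with source $\sr a$, in particular $a\lcm_Sb=b$ on the $S$ side, and likewise $\eps_S(a)\lcm_M\eps_S(b)=\eps_S(b)$, so the equivalence holds trivially. Once these corner cases are isolated, the generic argument is just Lemma~\ref{L:DivRedSeq} twice plus Corollary~\ref{C:meetUmS}, and no genuinely new idea is needed. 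The cleanest write-up will cite Lemma~\ref{L:AdjepsDelta} directly for "$a$ not right invertible $\Rightarrow$ ($\eps_S(a)\dive_M\bb \iff a\dive_S\SR{\bb}$)", which packages step two into a single line: from $\eps_S(a)\dive_M\bm$ and $\eps_S(b)\dive_M\bm$ we read off $a\dive_S\SR{\bm}$ and $b\dive_S\SR{\bm}$, done.
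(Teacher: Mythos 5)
Your proposal is correct and follows essentially the same route as the paper: reduce to part \eqref{alcmb}, dispose of the identity cases, cite the argument of Dehornoy's lemma for ``common right multiple implies right lcm exists'' in both $S$ and $\Um{S}$, and use Lemma~\ref{L:AdjepsDelta} to pull a common right multiple $\bm$ of $\eps_S(a)$ and $\eps_S(b)$ down to the common right multiple $\SR{\bm}$ of $a$ and $b$ in $S$. The paper's only difference is cosmetic: for the final identification $\eps_S(a\lcm b)=\eps_S(a)\lcm\eps_S(b)$ it simply applies Lemma~\ref{L:AdjepsDelta} a second time (from $a\lcm b\dive_S\SR{\bm}$ conclude $\eps_S(a\lcm b)\dive_M\bm$), rather than detouring through Corollary~\ref{C:meetUmS} and gcd--lcm duality as you sketch --- which is exactly the ``cleanest write-up'' you yourself describe at the end.
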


\begin{proof}
It suffices to prove~\eqref{alcmb}.
The case where either~$a$ or~$b$ is an identity is trivial, so we may assume that neither~$a$ nor~$b$ are identities of~$S$.
A preliminary observation is that $\eps_S(a)\lcm\eps_S(b)$ exists in~$\Um{S}$ if{f}~$\eps_S(a)$ and~$\eps_S(b)$ have a common right multiple in~$\Um{S}$, and, similarly, $a\lcm b$ exists in~$S$ if{f}~$a$ and~$b$ have a common right multiple in~$S$.
This follows from the argument of Dehornoy \cite[Lemma~2.15]{Dis}.

Now suppose that $a\lcm b$ exists in~$S$.
Clearly, $\eps_S(a\lcm b)$ is a common right multiple of~$\eps_S(a)$ and~$\eps_S(b)$.
Let~$\bc$ be a common right multiple of~$\eps_S(a)$ and~$\eps_S(b)$.
It follows from Lemma~\ref{L:AdjepsDelta} that~$a$ and~$b$ are both left divisors of~$\SR{\bc}$; whence $a\lcm b$ exists in~$S$, and is a left divisor of~$\SR{\bc}$.
Again by Lemma~\ref{L:AdjepsDelta}, it follows that $\eps_S(a\lcm b)$ is a left divisor of~$\bc$.
Hence, $\eps_S(a)\lcm\eps_S(b)$ exists in~$\Um{S}$, and is equal to $\eps_S(a\lcm b)$.

Suppose, conversely, that $\eps_S(a)\lcm\eps_S(b)$ exists in~$\Um{S}$; denote this element by~$\bc$.
Since $\eps_S(a)$ and~$\eps_S(b)$ are both left divisors of~$\bc$, it follows from Lemma~\ref{L:AdjepsDelta} that~$a$ and~$b$ are both left divisors of~$\SR{\bc}$; whence~$a\lcm b$ exists in~$S$.
\end{proof}

\begin{remark}\label{Rk:AdjepsDelta}
Lemma~\ref{L:AdjepsDelta} can be interpreted in terms of normal forms and Garside families, for the monoid~$\Um{S}$ (those concepts are defined in Dehornoy~\cite{DDGKM}).
Suppose that~$S$ is a gcd-category, and let $\ba\in\Um{S}$, written as a reduced sequence, say $\ba=\ba_1=(a_1,\dots,a_n)$.
By Lemma~\ref{L:AdjepsDelta}, the element $a_1=\SR{\ba}$ is the largest element of~$S$, with respect to left divisibility, left dividing~$\ba$.
Furthermore, since~$\Um{S}$ is left cancellative (cf. Corollary~\ref{C:PresCancella}), the reduced sequence $\ba_2\eqdef(a_2,\dots,a_n)$ is the only reduced sequence satisfying $\ba=\eps_S(a_1)\cdot\ba_2$, and then the above process can be started again on~$\ba_2$, yielding~$a_2$ as the largest element of~$S$, with respect to left divisibility, left dividing~$\ba_2$; and so on.
At the end of that process, we obtain that the expression $\ba=a_1\cdots a_n$ is the (left) \emph{greedy normal form} of~$\ba$, associated with the Garside family~$\eps_S[S]$ (in one-to-one correspondence with $(S\setminus\Idt S)\cup\set{1}$) of~$\Um{S}$.

An unusual feature of that Garside family is that it is \emph{two-sided}: it is closed under both left and right divisors (any divisor of an element is an element), and also, it is closed under finite lcms (cf. Proposition~\ref{P:Sclslcm}).
The left greedy normal form and the right greedy normal form, with respect to that family, are identical.
\end{remark}

\section{Floating homotopy group of a simplicial complex}
\label{S:FHG}

Although groupoids are instrumental in getting to our results, our main goals are the investigation of certain monoids and groups.
This partly explains why we are less interested in the homotopy groupoid~$\hg{K}$ (where~$K$ is a simplicial complex) than its universal group, which we shall denote by~$\HG{K}$.
In the present section we observe, in particular, that if~$K$ is connected, then~$\HG{K}$ is the free product of the fundamental group~$\pi_1(K,{}_{-})$ by the free group on the edges of a spanning tree of~$K$.
This yields a convenient recipe for quick calculation of floating homotopy groups.
Easy instances of that recipe are given in Examples~\ref{Ex:VG2rGL1} and~\ref{Rk:PosTree}.

\begin{definition}\label{D:FHG}
The \emph{floating homotopy group} of a simplicial complex~$K$, denoted by $\HG{K}$, is the group defined by generators~$[x,y]$ (or $[x,y]_K$ in case~$K$ needs to be specified), where $\set{x,y}\in K^{(1)}$, and relations
 \begin{equation}\label{Eq:rGKrel}
 [x,z]=[x,y]\cdot[y,z]\,,\quad
 \text{for all }x,y,z\in\VR K\text{ such that }
 \set{x,y,z}\in K^{(2)}\,.
 \end{equation}
\end{definition}

An alternative definition of~$\HG{K}$, suggested to the author by a referee, runs as follows.
For a vertex~$o$ not in~$K$, the \emph{cone}~$o*K$ of~$K$ is defined as the simplicial complex consisting of all simplices of~$K$ together with all sets of the form $\set{o}\cup X$ where~$X$ is either empty or a simplex of~$K$.
The edges~$\set{o,x}$, where~$x$ is a vertex of~$K$, define a spanning tree of~$o*K$.
By applying Tietze's Theorem (cf. Rotman \cite[Theorem~11.31]{Rotm1995}), we obtain the relation $\HG{K}\cong\pi_1(o*K,o)$.

%

Heuristically, $\HG{K}$ is the group universally generated by all homotopy classes of paths in~$K$, with endpoints not fixed --- thus our terminology ``floating''.
In fact, it is straightforward to verify the following description of~$\HG{K}$ in terms of the homotopy groupoid Functor~$\hgn$ (cf. Section~\ref{S:Basic}) and the universal group Functor~$\Ugn$ (cf. Section~\ref{S:UnivMon}).

\begin{proposition}\label{P:HG2rG}
$\HG{K}=\Ug{\hg{K}}$, for every simplicial complex~$K$.
\end{proposition}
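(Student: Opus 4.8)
The plan is to unwind both sides of the claimed equality $\HG{K}=\Ug{\hg{K}}$ through their defining universal properties and check that the two groups satisfy the same presentation (up to canonical isomorphism). First I would recall that $\Ug{\hg{K}}$ is, by Notation~\ref{Not:Ugrp} and Proposition~\ref{P:UgSEUniv}, the universal group of the groupoid $\hg{K}$, i.e.\ the initial object in the Category of functors from $\hg{K}$ to a group. Since $\hg{K}$ is generated, as a groupoid, by the one-edge-step homotopy classes $[x,y]$ with $\set{x,y}\in K^{(1)}$, and since $\Um{\hg{K}}=\Ug{\hg{K}}$ collapses only identities and keeps all existing products (Remark~\ref{Rk:Cat2SE}), a functor $\hg{K}\to G$ to a group is the same as an assignment $[x,y]\mapsto g_{x,y}\in G$ subject to: (i) $g_{x,x}=1$ for every vertex $x$ (identities go to $1$); (ii) $g_{x,z}=g_{x,y}g_{y,z}$ whenever the product $[x,y]\cdot[y,z]$ is defined in $\hg{K}$ \emph{and} the composite path is homotopic to the one-edge path, which by the generating homotopy $\bu\conc(x,y,z)\conc\bv\simeq\bu\conc(x,z)\conc\bv$ happens exactly when $\set{x,y,z}$ is a simplex, i.e.\ $\set{x,y,z}\in K^{(2)}$.

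Next I would observe that these are precisely relations~\eqref{Eq:rGKrel} of Definition~\ref{D:FHG} (noting that $[x,x]=1$ is the degenerate case $z=x$ of~\eqref{Eq:rGKrel}, or follows from it directly), so the universal property of $\Ug{\hg{K}}$ is verbatim the universal property defining $\HG{K}$ by generators and relations. Concretely, I would exhibit the comparison map both ways: the assignment $[x,y]\mapsto\eta_{\hg{K}}\pI{[x,y]}$ respects relations~\eqref{Eq:rGKrel} (because $\eta$ is a functor and $[x,z]=[x,y]\cdot[y,z]$ holds in $\hg{K}$ when $\set{x,y,z}$ is a simplex), hence factors through a homomorphism $\HG{K}\to\Ug{\hg{K}}$; conversely the assignment $[\bx]\mapsto[x_0,x_1]\cdots[x_{n-1},x_n]$ on homotopy classes of paths is well-defined (one checks it is invariant under the two generating elementary homotopies — trivial for $(x,x)\simeq(x)$ and exactly relation~\eqref{Eq:rGKrel} for $(x,y,z)\simeq(x,z)$), is a functor to $\HG{K}$, and therefore induces a homomorphism $\Ug{\hg{K}}\to\HG{K}$. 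These two homomorphisms are mutually inverse on generators, hence mutually inverse.

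Honestly, this statement is essentially a bookkeeping exercise: the substance is entirely in matching the generator/relation data, and the only place requiring a moment of care is checking that the composability-plus-homotopy condition on products in $\hg{K}$ translates to exactly the ``$\set{x,y,z}$ is a simplex'' side condition in~\eqref{Eq:rGKrel}, and nothing more — i.e.\ that no extra relations sneak in from longer homotopies. This is handled by the fact, recorded in Section~\ref{S:Basic}, that $\simeq$ is \emph{generated} by the two elementary moves, so it suffices to check well-definedness on those; the general case then follows by transitivity and compatibility with path multiplication. I expect no genuine obstacle; the proof in the text will presumably be the one-line remark ``this is straightforward to verify'', and my write-up would simply make the dictionary between the two universal properties explicit as above.
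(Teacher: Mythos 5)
Your argument is correct and is precisely the verification the paper leaves implicit (the text simply asserts that the identification of $\HG{K}$ with $\Ug{\hg{K}}$ is ``straightforward to verify''): matching the presentation~\eqref{Eq:rGKrel} against the universal property of $\Ug{\hg{K}}$, with the key point being that well-definedness on homotopy classes need only be checked on the two generating elementary moves. The one slightly loose phrase --- that $[x,y]\cdot[y,z]=[x,z]$ holds in $\hg{K}$ ``exactly when'' $\set{x,y,z}$ is a simplex --- is harmless, since any further coincidences are consequences of the generating moves and so are automatically respected, as you note.
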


In particular, for every vertex $p\in\VR{K}$, the fundamental group $\pi_1(K,p)$, consisting of all homotopy classes of all loops at~$p$, is a subgroup (vertex group) of the groupoid $\hg{K}$. 
By Lemma~\ref{L:KerofepsS}, $\pi_1(K,p)$ is thus a subgroup of~$\HG{K}$: the homotopy class~$[\bx]$ of a path $\bx=(x_0,\dots,x_n)$, with $x_0=x_n=p$, is identified with the element $[x_0,x_1]\cdots[x_{n-1},x_n]$ of~$\HG{K}$.

\begin{proposition}\label{P:VG2rGL}
Let~$K$ be a connected simplicial complex, let~$E$ be the set of all edges of a spanning tree of~$K$, and let~$p$ be a vertex of~$K$.
Then $\HG{K}\cong\Fg{E}*\pi_1(K,p)$.
Hence, $\pi_1(K,p)$ is a doubly free factor of~$\HG{K}$.\end{proposition}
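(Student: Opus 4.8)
The plan is to exhibit explicitly mutually inverse homomorphisms between $\HG{K}$ and $\Fg{E}*\pi_1(K,p)$. Recall from Definition~\ref{D:FHG} that $\HG{K}$ is presented by the generators $[x,y]$, $\set{x,y}\in K^{(1)}$, subject to $[x,z]=[x,y][y,z]$ for $\set{x,y,z}\in K^{(2)}$ (the degenerate cases $\set{x}$ and $\set{x,y}$ already forcing $[x,x]=1$ and $[y,x]=[x,y]^{-1}$), and that $\pi_1(K,p)$ was identified, just before the statement, with the subgroup of~$\HG{K}$ consisting of the elements $[x_0,x_1]\cdots[x_{n-1},x_n]$ coming from loops at~$p$. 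I would first root the chosen spanning tree~$T$ at~$p$, so every vertex $v\neq p$ has a unique parent~$u$; orient that tree edge from~$u$ to~$v$ and let $a_{\set{u,v}}$ be the corresponding free generator of~$\Fg{E}$. Two bookkeeping devices then record the tree path from~$p$ to a vertex~$v$: the element $\tau_v\in\HG{K}$ (with $\tau_p=1$ and $\tau_v=\tau_u\cdot[u,v]$ when~$u$ is the parent of~$v$), and the element $\widehat\tau_v\in\Fg{E}$ (with $\widehat\tau_p=1$ and $\widehat\tau_v=\widehat\tau_u\cdot a_{\set{u,v}}$). Finally, for any edge $\set{x,y}\in K^{(1)}$ the path following $\tau_x$, then $(x,y)$, then $\tau_y$ backwards is a loop at~$p$, so its homotopy class is an element $g_{x,y}\in\pi_1(K,p)$, which inside~$\HG{K}$ is the element $\tau_x\cdot[x,y]\cdot\tau_y^{-1}$. (Conceptually this is just the standard splitting of the universal group of a connected groupoid as its vertex group free-producted with a free group of rank $\card{\VR K}-1=\card{E}$.)

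Next I would define $\Psi\colon\Fg{E}*\pi_1(K,p)\to\HG{K}$ as the inclusion on the factor~$\pi_1(K,p)$ and by $a_{\set{u,v}}\mapsto[u,v]$ on~$\Fg{E}$; this requires nothing beyond the universal property of the free product. In the reverse direction I would define $\Phi\colon\HG{K}\to\Fg{E}*\pi_1(K,p)$ on generators by $\Phi([x,y])\eqdef\widehat\tau_x^{-1}\cdot g_{x,y}\cdot\widehat\tau_y$. The single genuine verification is that~$\Phi$ respects the defining relations of~$\HG{K}$: for $\set{x,y,z}\in K^{(2)}$, when one substitutes the definition into $\Phi([x,y])\Phi([y,z])$ the middle factors $\widehat\tau_y$ and $\widehat\tau_y^{-1}$ cancel, and one is reduced to the identity $g_{x,y}\cdot g_{y,z}=g_{x,z}$ in~$\pi_1(K,p)$, which is immediate from $[x,y]\cdot[y,z]=[x,z]$ holding in the groupoid~$\hg{K}$.

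It then remains to check that $\Phi$ and $\Psi$ are mutually inverse, and it suffices to do so on generators. For $\Psi\circ\Phi$: since $\Psi(\widehat\tau_v)=\tau_v$ and $\Psi(g_{x,y})=\tau_x\cdot[x,y]\cdot\tau_y^{-1}$ (as~$\Psi$ is the inclusion on~$\pi_1(K,p)$), one gets $\Psi\Phi([x,y])=\tau_x^{-1}\bigl(\tau_x[x,y]\tau_y^{-1}\bigr)\tau_y=[x,y]$. For $\Phi\circ\Psi$: on $g\in\pi_1(K,p)$, written as $[x_0,x_1]\cdots[x_{n-1},x_n]$ along a loop at~$p$, the factors $\widehat\tau_{x_i}^{\pm1}$ telescope away and the factors $g_{x_i,x_{i+1}}$ telescope back to~$g$; on a tree generator $a_{\set{u,v}}$, the relation $\tau_v=\tau_u[u,v]$ gives $g_{u,v}=1$, hence $\Phi([u,v])=\widehat\tau_u^{-1}\widehat\tau_v=a_{\set{u,v}}$. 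Thus $\HG{K}\cong\Fg{E}*\pi_1(K,p)$, and as~$\Fg{E}$ is free, $\pi_1(K,p)$ is by definition a doubly free factor of~$\HG{K}$.

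I expect the only real difficulty to be organizational: keeping the two tree‑path devices~$\tau_v\in\HG{K}$ and~$\widehat\tau_v\in\Fg{E}$ distinct (they are related by $\Psi$), and making sure the classes~$g_{x,y}$ genuinely lie in~$\pi_1(K,p)$, which is exactly where connectedness of~$K$ — equivalently, the existence of the spanning tree — enters. Every computation reduces to a telescoping identity in~$\hg{K}$ together with the already‑recorded fact that~$\pi_1(K,p)$ embeds into~$\HG{K}$, so no step is deep.
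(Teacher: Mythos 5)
Your argument is correct, but it takes a genuinely different route from the paper. The paper's proof is purely categorical: it quotes Higgins' decomposition of the fundamental groupoid of a connected complex along a spanning tree, $\hg{K}\cong F*\pi_1(K,p)$ with~$F$ the free groupoid on the tree (Corollary~1 of Chapter~12 in Higgins' book), and then applies the universal group Functor $\Ugn\colon\GPD\to\GP$, which, being a left adjoint, preserves coproducts; together with $\Ug{F}\cong\Fg{E}$ this gives the statement in three lines. You instead work directly from the presentation of~$\HG{K}$ in Definition~\ref{D:FHG} and build explicit mutually inverse homomorphisms $\Phi$ and~$\Psi$, using the rooted tree words $\tau_v$, $\widehat\tau_v$ and the conjugated loop classes $g_{x,y}=\tau_x[x,y]\tau_y^{-1}$. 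This is in effect a self-contained re-derivation, at the universal-group level, of exactly the special case of Higgins' theorem that the paper cites; what it buys is independence from the external reference and an explicit retraction $\HG{K}\to\pi_1(K,p)$ (kill the tree generators), at the cost of the bookkeeping you acknowledge. Your verifications all go through: the relation check reduces to $g_{x,y}g_{y,z}=g_{x,z}$, which holds in the vertex group of~$\hg{K}$ and hence in $\pi_1(K,p)$ viewed inside~$\HG{K}$ via the injection recorded just before the Proposition (Lemma~\ref{L:KerofepsS}); the degenerate simplices $\set{x}$ and $\set{x,y}$ are covered by the same computation, giving $\Phi([x,x])=1$ and $\Phi([y,x])=\Phi([x,y])^{-1}$ for free; and the two telescoping checks for $\Psi\circ\Phi$ and $\Phi\circ\Psi$ are as you describe. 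Connectedness enters exactly where you say it does, namely in the existence of the tree paths $\tau_v$ from~$p$ to every vertex.
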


\begin{proof}
Denote by~$F$ the free groupoid on the graph~$E$ (cf. Higgins \cite[Chapter~4]{Higg1971}).
By applying Corollary~1, Chapter~12 in Higgins~\cite{Higg1971}, to the fundamental groupoid~$\hg{K}$ of~$K$, we obtain the relation~$\hg{K}\cong F*\pi_1(K,p)$ (where~$*$ denotes the coproduct within the Category of all groupoids).
Since the universal group Functor $\Ugn\colon\GPD\to\GP$ is a left adjoint, it preserves coproducts.
By applying it to the relation above, and since $\Ug{F}\cong\Fg{E}$, we get our result.
\end{proof}

%

\begin{example}\label{Ex:VG2rGL1}
Consider the one-dimensional simplicial complex~$K$ represented in Figure~\ref{Fig:square}.
\begin{figure}[htb]
\includegraphics{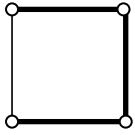}
\caption{A simplicial complex of dimension~$1$}
\label{Fig:square}
\end{figure}
A direct calculation shows that~$\HG{K}$ is a free group on the edges of~$K$.
Hence, $\HG{K}\cong\Fg{4}$.

A spanning tree of~$K$ is represented with thick lines; it has three edges.
By using Tietze's Theorem, we get $\pi_1(K,{}_{-})\cong\ZZ$ (this is trivial in more than one way: for example, the geometric realization of~$K$ is a circle).
We get again $\HG{K}\cong\ZZ*\Fg{3}\cong\Fg{4}$.

See also Example~\ref{Rk:PosTree}.
\end{example}

\section{Interval monoid and interval group of a poset}
\label{S:IntMon}

Every poset~$P$ gives rise, in a standard way, to a category~$\Cat{P}$, formally defined below.
The universal monoid~$\HM{P}$ of~$\Cat{P}$ will be called the \emph{interval monoid} of~$P$, and it is a far more special concept than the universal monoid of an arbitrary category.
By applying the results of Section~\ref{S:divetgcd} to that monoid, we will get a simple criterion for~$\HM{P}$ being a gcd-monoid, always satisfied for the barycentric subdivision of a simplicial complex.
This will enable us, in Theorem~\ref{T:VG2rGL2}, to describe any group in terms of a gcd-monoid of the form~$\HM{P}$, at least up to a doubly free factor.

\begin{definition}\label{D:CatP}
For a poset~$P$, we denote by $\Cat{P}$ the category consisting of all closed intervals $[x,y]$, where $x\leq y$ in~$P$, with (partial) multiplication given by
 \[
 [x,z]=[x,y]\cdot[y,z]\,,\quad\text{whenever }
 x\leq y\leq z\text{ in }P\,.
 \]
\end{definition}

By identifying any $x\in P$ with the singleton interval $[x,x]=\set{x}$, we obtain that the source and target map on~$\Cat{P}$ are given by $\sr[x,y]=x$ and $\tg[x,y]=y$, for all $x,y\in P$ with $x\leq y$.

In the ``object-arrow'' view of~$\Cat{P}$, the objects are the elements of~$P$, with exactly one arrow from~$x$ to~$y$ if $x\leq y$, no arrow if $x\nleq y$.

\begin{definition}\label{D:rGp}
Let~$P$ be a poset.
The \emph{interval monoid} (resp., \emph{interval group}) of~$P$ is defined as $\HM{P}=\Um{\Cat{P}}$ (resp., $\HG{P}=\Ug{\Cat{P}}$).
\end{definition}

It follows that a system of generators and relations for both~$\HM{P}$ (within monoids) and~$\HG{P}$ (within groups) is given as follows:
the generators are (indexed by) the closed intervals $[x,y]$, where $x\leq y$ in~$P$ (write $[x,y]_P$ in case~$P$ needs to be specified), and the defining relations are those of the form
 \begin{align}
 [x,x]&=1\,,
 &&\text{whenever }x\in P\,,\label{Eq:ReflrGP}\\
 [x,z]&=[x,y]\cdot[y,z]\,,
 &&\text{whenever }x\leq y\leq z\text{ within }P\,.
 \label{Eq:TransrGP}
 \end{align}

\begin{remark}\label{Rk:IncAlg}
For a poset~$P$ and a commutative, unital ring~$K$, the semigroup algebra~$K[\HM{P}]$ of the monoid~$\HM{P}$ bears some similarities with the \emph{incidence algebra of~$P$ over~$K$} (the latter concept originates in Rota~\cite{Rota1964a}).
However, the two algebras are not isomorphic as a rule: for example, $y_1\neq x_2$ implies that $[x_1,y_1]\cdot[x_2,y_2]$ vanishes within the incidence algebra of~$P$, but not within~$K[\HM{P}]$.
In particular, if~$K$ is a field and~$P$ is a finite poset with a non-trivial interval $x<y$, the incidence algebra of~$P$ over~$K$ is finite-dimensional, while all powers~$[x,y]^n$, for $n\in\NN$, are distinct in~$\HM{P}$, so~$K[\HM{P}]$ is infinite-dimensional.
\end{remark}

\begin{proposition}\label{P:Meas2Val}
$\HG{P}\cong\HG{\Sim{P}}$, for every poset~$P$.
\end{proposition}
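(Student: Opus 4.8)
The plan is to compare the two groups via their presentations. By Definition~\ref{D:rGp} and the remarks following it, $\HG{P}=\Ug{\Cat{P}}$ is the group presented by generators $[x,y]$ for $x\leq y$ in~$P$, subject to the relations \eqref{Eq:ReflrGP} and \eqref{Eq:TransrGP}. On the other hand, $\HG{\Sim{P}}$ is, by Definition~\ref{D:FHG}, the group presented by generators $[x,y]$ for $\set{x,y}\in\Sim{P}^{(1)}$ — that is, for comparable pairs $x,y$ in~$P$, with no assumed order between them — subject to the relations \eqref{Eq:rGKrel}, namely $[x,z]=[x,y]\cdot[y,z]$ whenever $\set{x,y,z}$ is a chain of~$P$. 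So the two presentations differ only in bookkeeping: $\HG{\Sim{P}}$ has a generator $[x,y]$ for each comparable \emph{ordered} pair in either order, whereas $\HG{P}$ has one only for each pair with $x\leq y$.

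First I would define a homomorphism $\gf\colon\HG{\Sim{P}}\to\HG{P}$ on generators by sending $[x,y]\mapsto[x,y]_P$ when $x\leq y$ and $[x,y]\mapsto[y,x]_P^{-1}$ when $y\leq x$ (both prescriptions agree when $x=y$, giving~$1$ by \eqref{Eq:ReflrGP}). To see this respects the relations \eqref{Eq:rGKrel}, one checks that for a chain $\set{x,y,z}$ of~$P$, the images of $[x,y]$, $[y,z]$, $[x,z]$ satisfy $\gf[x,z]=\gf[x,y]\cdot\gf[y,z]$; since a three-element chain in~$P$ is linearly ordered, there are essentially six orderings of the triple $(x,y,z)$ to verify, and each reduces to \eqref{Eq:TransrGP} (together with \eqref{Eq:ReflrGP} in the degenerate cases), using that in a group $a=bc$ is equivalent to $b^{-1}=ca^{-1}$, etc. Conversely I would define $\psi\colon\HG{P}\to\HG{\Sim{P}}$ by $[x,y]_P\mapsto[x,y]$ for $x\leq y$; relation \eqref{Eq:ReflrGP} goes to the instance $\set{x,x,x}$ of \eqref{Eq:rGKrel} (which forces $[x,x]=1$), and \eqref{Eq:TransrGP} is literally an instance of \eqref{Eq:rGKrel}. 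Then $\psi\circ\gf$ and $\gf\circ\psi$ are both the identity on generators, hence $\gf$ is an isomorphism.

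The main obstacle, such as it is, is purely organizational: one must be careful that the generating set of $\HG{\Sim{P}}$ as written in Definition~\ref{D:FHG} is indexed by edges $\set{x,y}$ but the \emph{names} $[x,y]$ and $[y,x]$ are a priori distinct generators (the relation \eqref{Eq:rGKrel} with the degenerate chain $\set{x,x,y}$, read as $[x,y]=[x,x]\cdot[x,y]$ and as $[x,y]=[x,y]\cdot[y,y]$, only pins down $[x,x]=[y,y]=1$, and the chain $\set{y,x,y}$ gives $[y,y]=[y,x]\cdot[x,y]$, i.e.\ $[x,y]=[y,x]^{-1}$). So one should first record that in $\HG{\Sim{P}}$ one has $[y,x]=[x,y]^{-1}$, which makes the two-directional bookkeeping harmless and justifies the formula for~$\gf$. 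After that remark, everything is a routine check on generators and relations, so the proof is short. Alternatively — and I would mention this as the quicker route — one can invoke Proposition~\ref{P:HG2rG}, which gives $\HG{\Sim{P}}=\Ug{\hg{\Sim{P}}}$, and observe that the fundamental (edge-path) groupoid $\hg{\Sim{P}}$ of the chain complex is exactly the groupoid enveloping (i.e.\ formal inverses adjoined to) the category $\Cat{P}$, since the homotopy relations on paths in $\Sim{P}$ — collapsing $(x,x)$ and replacing $(x,y,z)$ by $(x,z)$ on chains — are precisely the composition relations of $\Cat{P}$; then apply the universal group Functor and use $\HG{P}=\Ug{\Cat{P}}$ together with the fact that $\Ugn$ factors through the enveloping groupoid. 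Either way the content is the identification of the two presentations; I would write the explicit presentation argument as the proof since it needs no external groupoid lemmas.
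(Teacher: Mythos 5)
Your proof is correct and follows essentially the same route as the paper: both construct the two homomorphisms directly from the presentations (the paper's $\gf$ is your $\psi$ and vice versa, and its ``easy case inspection'' is exactly your six-orderings check plus the observation that $[y,x]=[x,y]^{-1}$ in $\HG{\Sim{P}}$) and conclude that they are mutually inverse. Your extra remarks, including the alternative via Proposition~\ref{P:HG2rG}, are sound but not needed.
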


\begin{proof}
The elements $[x,y]_{\Sim{P}}$ of $\HG{\Sim{P}}$, for $x\leq y$ in~$P$, satisfy the defining relations~\eqref{Eq:ReflrGP} and~\eqref{Eq:TransrGP} of~$\HG{P}$,
thus there is a unique group homomorphism $\gf\colon\HG{P}\to\HG{\Sim{P}}$ such that $\gf([x,y]_P)=[x,y]_{\Sim{P}}$ whenever $x\leq y$ in~$P$.
A similar argument, together with an easy case inspection, shows that there is a unique group homomorphism $\psi\colon\HG{\Sim{P}}\to\HG{P}$ such that
 \[
 \psi([x,y]_{\Sim{P}})=\begin{cases}
 [x,y]_P\,,&\text{if }x\leq y\,,\\
 [y,x]_P^{-1}\,,&\text{if }y\leq x\,,
 \end{cases}
 \qquad\text{for 	every chain }\set{x,y}\text{ of }P\,.
 \]
Obviously, $\gf$ and~$\psi$ are mutually inverse.
\end{proof}

By invoking Proposition~\ref{P:VG2rGL}, Proposition~\ref{P:Meas2Val} can be used for quick calculations of~$\HG{P}$ for finite~$P$.
Set $\pi_1(P,p)\eqdef\pi_1(\Sim{P},p)$, whenever $p\in P$.

%

\begin{example}\label{Rk:PosTree}
\begin{figure}[htb]
\includegraphics{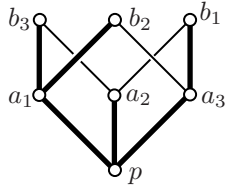}
\caption{A spanning tree in a poset}\label{Fig:P9}
\end{figure}
Denote by~$P$ the poset represented in Figure~\ref{Fig:P9} and denote by~$K$ its chain complex.
Since~$P$ has a lower bound (viz.~$p$), $\pi_1(P,p)$ is trivial.
This can also be verified directly by applying Tietze's Theorem to the spanning tree~$E$ represented in thick lines in Figure~\ref{Fig:P9}.
Since~$E$ has~$6$ elements and by Propositions~\ref{P:VG2rGL} and~\ref{P:Meas2Val}, it follows that $\HG{P}\cong\Fg{6}$.
\end{example}

The following result expresses a quite special property of interval monoids of posets, not shared by all universal monoids of categories as a rule.

\begin{proposition}\label{P:rGPintoGrp}
Let~$P$ be a poset.
Then the interval monoid~$\HM{P}$ embeds into the free group on~$P$.
In particular, the canonical map $\HM{P}\to\HG{P}$ is a monoid embedding.
\end{proposition}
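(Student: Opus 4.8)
The plan is to exhibit an explicit monoid homomorphism $f\colon\HM{P}\to\Fg{P}$ and then verify directly that it is injective, by comparing the normal form of an element of $\HM{P}$ with the reduced-word normal form of its image in the free group. First I would define $f$ on the generators of the presentation of~$\HM{P}$ recalled above (generators $[x,y]$ for $x\leq y$ in~$P$, relations~\eqref{Eq:ReflrGP} and~\eqref{Eq:TransrGP}) by setting $f([x,y])=x^{-1}y$, where $x$ and $y$ are regarded as free generators of~$\Fg{P}$. The relations are immediately respected: $f([x,x])=x^{-1}x=1$, and $f([x,y])\cdot f([y,z])=(x^{-1}y)(y^{-1}z)=x^{-1}z=f([x,z])$ whenever $x\leq y\leq z$. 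So $f$ is a well-defined monoid homomorphism; equivalently, $[x,y]\mapsto x^{-1}y$ is a functor $\Cat{P}\to\Fg{P}$, which extends to $\HM{P}=\Um{\Cat{P}}$ by Proposition~\ref{P:UmSEUniv}.

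\textbf{Injectivity.} Next I would check that $f$ is one-to-one. By Remark~\ref{Rk:Cat2SE}, every element of~$\HM{P}$ has a unique expression $[x_1,y_1][x_2,y_2]\cdots[x_n,y_n]$ in which each factor is a standard generator --- that is, $x_i<y_i$ in~$P$ --- and no product $[x_i,y_i]\cdot[x_{i+1},y_{i+1}]$ is defined in~$\Cat{P}$, the latter condition meaning precisely $y_i\neq x_{i+1}$ for $1\leq i<n$. Its image under~$f$ is the word
\[
 x_1^{-1}y_1x_2^{-1}y_2\cdots x_n^{-1}y_n
\]
in the alphabet $P\cup P^{-1}$, and I claim it is already reduced: inside a block $x_i^{-1}y_i$ there is no cancellation because $x_i\neq y_i$ (since $x_i<y_i$), and at the junction of two consecutive blocks the adjacent letters are $y_i$ and $x_{i+1}^{-1}$, which do not cancel because $y_i\neq x_{i+1}$. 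Thus $f$ sends $[x_1,y_1]\cdots[x_n,y_n]$ to a reduced word of length~$2n$ whose letters recover the list $(x_i,y_i)_{1\leq i\leq n}$. Since reduced words representing the same element of a free group coincide letter by letter, two elements of~$\HM{P}$ with equal images under~$f$ have normal forms of equal length with identical entries, hence are equal (and the only element mapped to~$1$ is the empty product). Therefore $f$ is injective and $\HM{P}$ embeds into~$\Fg{P}$.

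\textbf{The "in particular" and the main obstacle.} For the last assertion, since $\Fg{P}$ is a group, the universal property of $\HG{P}=\Ug{\HM{P}}$ (Notation~\ref{Not:Ugrp}) yields a factorization $f=g\circ\iota$, where $\iota\colon\HM{P}\to\HG{P}$ is the canonical homomorphism and $g\colon\HG{P}\to\Fg{P}$ is a group homomorphism; injectivity of~$f$ forces injectivity of~$\iota$. I do not expect a serious obstacle here; the one point deserving attention is the bookkeeping in the injectivity step --- matching the combinatorial condition defining reduced sequences in~$\HM{P}$ (no two consecutive entries composable in~$\Cat{P}$) with the non-cancellation condition in the free group, which is exactly the inequality $y_i\neq x_{i+1}$ used to conclude that the displayed word is reduced.
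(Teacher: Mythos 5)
Your proposal is correct and follows essentially the same route as the paper: define the homomorphism by $[x,y]\mapsto x^{-1}y$, observe that the image of a reduced sequence $[x_1,y_1]\cdots[x_n,y_n]$ is the already-reduced word $x_1^{-1}y_1\cdots x_n^{-1}y_n$ (precisely because $x_i<y_i$ and $y_i\neq x_{i+1}$), and conclude injectivity from uniqueness of reduced words in $\Fg{P}$. The deduction of the ``in particular'' clause via the universal property of $\HG{P}$ is also as intended.
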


\begin{proof}
Set $\mu(x,y)\eqdef x^{-1}y$, for all $x\leq y$ in~$P$.
Trivially, $\mu(x,x)=1$ and $\mu(x,z)=\mu(x,y)\cdot\mu(y,z)$ whenever $x\leq y\leq z$ in~$P$.
Hence, there is a unique monoid homomorphism $\ol{\mu}\colon\HM{P}\to\Fg{P}$ such that
 \[
 \ol{\mu}([x,y])=x^{-1}y\,,\quad\text{whenever }
 x\leq y\text{ in }P\,.
 \]
It thus suffices to prove that~$\ol{\mu}$ is one-to-one.
Let~$\ba$ be an element of $\HM{P}=\Um{\Cat{P}}$, written in reduced form as
 \begin{equation}\label{Eq:RedFormrGp}
 \ba=[x_1,y_1]\cdot[x_2,y_2]\cdots[x_n,y_n]\,,
 \quad\text{where }x_i<y_i
 \text{ whenever }1\leq i\leq n\,.
 \end{equation}
(cf. Section~\ref{S:UnivMon}).
The fact that the right hand side of~\eqref{Eq:RedFormrGp} is a reduced finite sequence means that $y_i\neq x_{i+1}$ whenever $1\leq i<n$.
Applying the homomorphism~$\ol{\mu}$ to~\eqref{Eq:RedFormrGp}, we obtain
 \begin{equation}\label{Eq:NorminFree}
 \ol{\mu}(\ba)=x_1^{-1}y_1x_2^{-1}y_2\cdots
 x_n^{-1}y_n\,.
 \end{equation}
Since $y_i\neq x_{i+1}$ whenever $1\leq i<n$, the word written on the right hand side of~\eqref{Eq:NorminFree} is the unique reduced word representing the element~$\ol{\mu}(\ba)$ of~$\Fg{P}$.
Hence, $\ol{\mu}(\ba)$ determines the sequence $(x_1,y_1,x_2,y_2,\dots,x_n,y_n)$, and hence it determines~$\ba$, thus completing the proof that~$\ol{\mu}$ is one-to-one.
\end{proof}

Proposition~\ref{P:rGPintoGrp} will get strengthened further in Proposition~\ref{P:rGpLinExt}, where, in the finite case, ``free group'' will be improved to ``free monoid''.

\begin{remark}\label{Rk:rGPintoGrp}
It is known since Bergman \cite[Example~23]{Berg2014} that there are monoids~$M$, embeddable into free groups, of which the universal group is not free.
This remark applies to the monoids~$\HM{P}$: although, by Proposition~\ref{P:rGPintoGrp}, the monoid~$\HM{P}$ always embeds into a free group, it will follow from Theorem~\ref{T:VG2rGL2} that every group is a doubly free factor of some~$\HG{P}$;
and thus, $\HG{P}$ may not be free (e.g., it may have torsion).
\end{remark}

Not every~$\HM{P}$, for~$P$ a poset, is a gcd-monoid.
Nevertheless, the posets~$P$ such that~$\HM{P}$ is a gcd-monoid can be easily characterized.
We set
 \begin{align}
 \INF{P}{a}&=\setm{x\in P}{x\leq a}\,,\label{Eq:INFPa}\\
 \SUP{P}{a}&=\setm{x\in P}{x\geq a}\,,\label{Eq:SUPa}
 \end{align}
for each $a\in P$.

\begin{proposition}\label{P:rGpPgcd}
The following statements hold, for any poset~$P$:
\begin{enumerater}
\item\label{rGpPlgcd}
$\HM{P}$ is a left gcd-monoid if{f} $\SUP{P}{a}$ is a \ms\ for every $a\in P$.

\item\label{rGpPrgcd}
$\HM{P}$ is a right gcd-monoid if{f} $\INF{P}{a}$ is a \js\ for every $a\in P$.

\end{enumerater}
\end{proposition}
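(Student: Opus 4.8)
The plan is to deduce the statement from Theorem~\ref{T:UmSgcdmon}, which identifies $\HM{P}=\Um{\Cat{P}}$ being a left (resp.\ right) gcd-monoid with $\Cat{P}$ being a left (resp.\ right) gcd-category. So I would first check the ``cheap'' parts of Definition~\ref{D:LeftgcdCat} for $\Cat{P}$, which hold for every poset~$P$. Conicality: a product $[x,y]\cdot[y,z]=[x,z]$ that is an identity forces $x=z$, hence (since $x\leq y\leq z$) $x=y=z$, so both factors are identities. Cancellativity on both sides: $\Cat{P}$ has at most one arrow between any two objects, so two arrows with the same source and target coincide, and this immediately yields both cancellation laws; by Lemma~\ref{L:Cancell}, $\dive_{\Cat{P}}$ and $\divet_{\Cat{P}}$ are then genuine partial orderings. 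Thus the whole content of the proposition reduces to: $\Cat{P}$ has a left gcd for every pair of arrows with the same source if{f} $\SUP{P}{a}$ is a \ms\ for every $a$; and dually for right gcds.

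Next I would analyse left divisibility among arrows sharing a source $a\in P$. These arrows are exactly the $[a,y]$ with $a\leq y$, and one checks at once that $[a,y]\dive_{\Cat{P}}[a,z]$ if{f} $y\leq z$. More generally, any common left divisor $c$ of $[a,y]$ and $[a,z]$ must satisfy $\sr{c}=a$, hence $c=[a,t]$ with $t=\tg{c}$, and $[a,t]$ left divides both $[a,y]$ and $[a,z]$ if{f} $a\leq t\leq y$ and $a\leq t\leq z$, i.e.\ if{f} $t$ is a lower bound of $\set{y,z}$ inside the poset $\SUP{P}{a}$. Therefore $t\mapsto[a,t]$ is an order isomorphism from $\setm{t\in\SUP{P}{a}}{t\leq y\text{ and }t\leq z}$ onto the poset of common left divisors of $[a,y]$ and $[a,z]$ ordered by $\dive_{\Cat{P}}$, and so $[a,y]$ and $[a,z]$ have a left gcd in $\Cat{P}$ if and only if $y$ and $z$ have a meet in $\SUP{P}{a}$. (The degenerate cases where one of $y,z$ equals $a$, i.e.\ one of the two arrows is an identity, are consistent with this, since the set of common lower bounds always contains~$a$.)

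Combining the two paragraphs: every pair of arrows of $\Cat{P}$ with a common source has the form $\set{[a,y],[a,z]}$ with $y,z\in\SUP{P}{a}$, so $\Cat{P}$ is a left gcd-category if{f} every pair of elements of $\SUP{P}{a}$ has a meet in $\SUP{P}{a}$, for every $a$ --- that is, if{f} every $\SUP{P}{a}$ is a \ms. With Theorem~\ref{T:UmSgcdmon} this proves~\eqref{rGpPlgcd}. For~\eqref{rGpPrgcd} I would invoke the order-duality $P\mapsto P^{\op}$: one has $\Cat{P^{\op}}=\Cat{P}^{\op}$, hence $\HM{P^{\op}}=\HM{P}^{\op}$, ``right gcd-monoid'' becomes ``left gcd-monoid'', and $\SUP{P^{\op}}{a}=\INF{P}{a}$ being a \ms\ becomes $\INF{P}{a}$ being a \js. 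The only step requiring real care is the bookkeeping of the second paragraph --- namely that a common left divisor is forced to have source $a$, so that it is parametrised by an element of $\SUP{P}{a}$, and that the order isomorphism (and its effect on greatest elements) is exact, identities included; beyond that there is no serious obstacle.
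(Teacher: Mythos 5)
Your proposal is correct and follows essentially the same route as the paper: reduce via Theorem~\ref{T:UmSgcdmon} to $\Cat{P}$ being a left gcd-category, compute the left divisibility ordering $[u_1,v_1]\dive_{\Cat{P}}[u_2,v_2]$ if{f} $u_1=u_2$ and $v_1\leq v_2$, identify left gcds of $[a,y]$, $[a,z]$ with meets of $\set{y,z}$ in $\SUP{P}{a}$, and dualize for the second part. Your explicit verification of conicality and cancellativity of $\Cat{P}$ is a welcome detail that the paper leaves tacit, but it does not change the argument.
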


\begin{proof}
By symmetry, it suffices to establish~\eqref{rGpPlgcd}.
It follows from Theorem~\ref{T:UmSgcdmon} that the monoid $\HM{P}=\Um{\Cat{P}}$ is a left gcd-monoid if{f} $\Cat{P}$ is a left gcd-category.
By definition, the latter statement means that for all $x,y_1,y_2\in P$, if $x\leq y_1$ and $x\leq y_2$, then the left gcd $[x,y_1]\gcd_{\Cat{P}}[x,y_2]$ is defined.
Now it is straightforward to verify that the left divisibility preordering of~$\Cat{P}$ is given by
 \[
 [u_1,v_1]\dive_{\Cat{P}}[u_2,v_2]\ \Leftrightarrow\ 
 (u_1=u_2\text{ and }v_1\leq v_2)\,,\qquad
 \text{for all }u_1,u_2,v_1,v_2\in P\,.
 \]
In particular, $[x,y_1]\gcd_{\Cat{P}}[x,y_2]$ is defined if{f} $\set{y_1,y_2}$ has a greatest lower bound in $\SUP{P}{x}$.
The desired conclusion follows immediately.
\end{proof}

As an immediate consequence, observe the following.

\begin{corollary}\label{C:rGpPgcd}
Let~$P$ be the barycentric subdivision of a simplicial complex~$K$.
Then $\HM{P}$ is a gcd-monoid.
\end{corollary}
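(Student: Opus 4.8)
The plan is to invoke Proposition~\ref{P:rGpPgcd}: to show that $\HM{P}$ is a gcd-monoid it suffices to check that $\SUP{P}{a}$ is a \ms\ and that $\INF{P}{a}$ is a \js, for every $a\in P$. By definition of the barycentric subdivision, $P$ is the set of all simplices of~$K$ ordered under inclusion, so each $a\in P$ is a nonempty finite set of vertices with $a\in K$, and $\INF{P}{a}=\setm{x\in P}{x\subseteq a}$ while $\SUP{P}{a}=\setm{x\in P}{a\subseteq x}$.

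For the principal ideal, recall that every nonempty subset of a simplex is a simplex, so $\INF{P}{a}$ is exactly the collection of all nonempty subsets of the finite set~$a$. Given $x,y\in\INF{P}{a}$, the union $x\cup y$ is a nonempty subset of~$a$, hence again lies in $\INF{P}{a}$, and it is plainly the least upper bound of $\set{x,y}$ in that poset; thus $\INF{P}{a}$ is a \js. For the principal filter, given simplices $x,y$ with $a\subseteq x$ and $a\subseteq y$, the intersection $x\cap y$ contains~$a$, hence is nonempty, and being a nonempty subset of the simplex~$x$ it is a simplex of~$K$; so $x\cap y\in\SUP{P}{a}$, and it is evidently the greatest lower bound of $\set{x,y}$ there. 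Hence $\SUP{P}{a}$ is a \ms, and Proposition~\ref{P:rGpPgcd} yields that $\HM{P}$ is a gcd-monoid.

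There is essentially no obstacle here. The only places the simplicial-complex axioms are used are the closure of~$K$ under passing to nonempty subsets --- once to identify $\INF{P}{a}$ with the set of nonempty subsets of~$a$, and once more to see that $x\cap y$ is a simplex in the filter computation; everything else is a direct unwinding of the definitions and of the criterion established in Proposition~\ref{P:rGpPgcd}.
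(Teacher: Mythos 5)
Your proof is correct and follows essentially the same route as the paper: reduce to Proposition~\ref{P:rGpPgcd}, then check that in $\INF{P}{a}$ the join is the union and in $\SUP{P}{a}$ the meet is the intersection, each being a simplex because $K$ is closed under nonempty subsets. No issues.
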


\begin{proof}
The elements of~$P$ are, by definition (cf. Section~\ref{S:Basic}), the simplices of~$K$.
For every simplex~$\ba$ of~$K$, the subset $\INF{P}{\ba}=\setm{\bx\in P}{\bx\subseteq\ba}$ is a \js\ (the join of $\set{\bx_1,\bx_2}$ is $\bx_1\cup\bx_2$; it is nonempty because it contains~$\bx_1$, and it is contained in~$\ba$, thus it is a simplex), and the subset $\SUP{P}{\ba}=\setm{\bx\in P}{\ba\subseteq\bx}$ is a \ms\ (the meet of $\set{\bx_1,\bx_2}$ is $\bx_1\cap\bx_2$; it is nonempty because it contains~$\ba$, and it is contained in~$\bx_1$, thus it is a simplex).
The desired conclusion follows from Proposition~\ref{P:rGpPgcd}.
\end{proof}

The following result shows that loosely speaking, groups and gcd-monoids, and even interval monoids of posets, are objects of similar complexity.

\begin{theorem}\label{T:VG2rGL2}
For every group~$G$, there exists a connected poset~$P$ such that~$G$ is a doubly free factor of~$\HG{P}$ and such that the following conditions hold:
\begin{enumerater}

\item\label{lengthP}
$P$ has height~$2$.

\item\label{rGpPgcd}
$\HM{P}$ is a gcd-monoid.
\end{enumerater}
Furthermore, $P$ can be taken to be finite if{f}~$G$ is finitely presented.
\end{theorem}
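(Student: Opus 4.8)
The plan is to take $P$ to be the barycentric subdivision of a well-chosen $2$-dimensional simplicial complex~$K$. Then conditions~\ref{lengthP} and~\ref{rGpPgcd} are automatic, and it remains only to arrange that the fundamental group of the chain complex of~$P$ is~$G$, after which Propositions~\ref{P:Meas2Val} and~\ref{P:VG2rGL} yield the ``doubly free factor'' assertion. So the first move is to build, from a presentation $G=\seqm{X}{R}$, a connected $2$-dimensional simplicial complex~$K$ with $\pi_1(K,p)\cong G$ for every vertex~$p$, finite whenever $X$ and $R$ are both finite. This is classical: realize the presentation $2$-complex topologically (one $0$-cell, one $1$-cell per generator of~$X$, one $2$-cell per relator of~$R$) and replace it, up to homotopy equivalence, by a simplicial complex \emph{via} simplicial approximation (cf.\ Rotman~\cite{Rotm1995}); a finite presentation gives a finite such~$K$. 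Attaching one further $2$-simplex to~$K$ along an edge (adding one new vertex) leaves the homotopy type unchanged --- a $2$-simplex deformation retracts onto any of its edges --- and forces $\dim K=2$; so I may assume $K$ is connected of dimension exactly~$2$.

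Next, let $P$ be the barycentric subdivision of~$K$, i.e., the set of simplices of~$K$ ordered by inclusion. Each simplex of~$K$ has at most three elements, so every chain of~$P$ has at most three elements, whence $P$ has height~$2$, which is~\ref{lengthP}; and $\HM{P}$ is a gcd-monoid by Corollary~\ref{C:rGpPgcd}, which is~\ref{rGpPgcd}. Moreover $P$ is connected: any two of its minimal elements $\set{v}$, $\set{w}$ are joined, through common $1$-simplices, along an edge-path of~$K$ from~$v$ to~$w$, and every element of~$P$ lies above one of its vertices. Finally, $\Sim{P}$ is precisely the barycentric subdivision of~$K$ seen as a simplicial complex, so $\pi_1(\Sim{P},p)\cong\pi_1(K,p)\cong G$ (barycentric subdivision does not change the fundamental group; cf.\ Rotman~\cite{Rotm1995}). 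By Proposition~\ref{P:Meas2Val}, $\HG{P}\cong\HG{\Sim{P}}$; choosing a spanning tree of~$\Sim{P}$ with edge set~$E$, Proposition~\ref{P:VG2rGL} gives $\HG{P}\cong\Fg{E}*\pi_1(\Sim{P},p)\cong\Fg{E}*G$, so $G$ is a doubly free factor of~$\HG{P}$, as required.

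For the finiteness equivalence: if $G$ is finitely presented, the first step provides a finite~$K$, hence a finite~$P$. Conversely, suppose $P$ is finite. Then $\HG{P}$ is finitely presented, having the finite presentation with generators the intervals $[x,y]$ for $x\leq y$ in~$P$ and the finitely many defining relations~\eqref{Eq:ReflrGP} and~\eqref{Eq:TransrGP}, and the spanning tree edge set~$E$ is finite. Since $G\cong\pi_1(\Sim{P},p)$ is obtained from~$\HG{P}$ by killing the free factor~$\Fg{E}$ --- equivalently, by setting the finitely many spanning-tree generators to~$1$ --- $G$ is a quotient of the finitely presented group~$\HG{P}$ by the normal closure of a finite set, hence is finitely presented.

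The only genuinely delicate step is the first one: manufacturing an honest \emph{simplicial} complex --- not merely a CW complex --- with the prescribed fundamental group, finite when $G$ is finitely presented, and of dimension exactly~$2$. Once that is in hand, everything else is assembled routinely from Corollary~\ref{C:rGpPgcd}, Proposition~\ref{P:Meas2Val}, and Proposition~\ref{P:VG2rGL}.
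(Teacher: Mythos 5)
Your proof is correct and follows essentially the same route as the paper: realize~$G$ as the fundamental group of a connected $2$-dimensional simplicial complex~$K$ (the paper simply cites Rotman's Theorem~11.64 for this step), pass to the barycentric subdivision~$P$, and assemble the conclusion from Corollary~\ref{C:rGpPgcd}, Proposition~\ref{P:Meas2Val}, and Proposition~\ref{P:VG2rGL}. The only (harmless) divergence is in the converse finiteness direction, where the paper deduces that~$G$ is finitely presented because it is a retract of the finitely presented group~$\HG{P}$, whereas you obtain it as the quotient of $\HG{P}\cong\Fg{E}*G$ by the normal closure of the finite set~$E$; both justifications are valid.
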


\begin{proof}
It is well known (see, for example, Rotman \cite[Theorem~11.64]{Rotm1995}) that~$G$ is isomorphic to the fundamental group of some connected simplicial complex~$K$, which, in addition, is finite in case~$G$ is finitely presented.

Observe that the simplicial complex~$K$ constructed in the proof of \cite[Theorem~11.64]{Rotm1995} has dimension~$2$.
(Alternatively, the fundamental group of~$K$ depends only on the $2$-skeleton~$K^{(2)}$.)

Since~$K$ is a connected simplicial complex, the barycentric subdivision~$P$ of~$K$ (cf. Section~\ref{S:Basic}) is a connected poset.
Since~$K$ has dimension~$2$, $P$ has height~$2$.
Now it is well known that~$K$ and~$\Sim{P}$ have isomorphic fundamental groups:
for example, $K$ and~$\Sim{P}$ have the same geometric realizations (cf. Rotman \cite[Exercise~7.12(i)]{Rotm1988}), and the fundamental group of~$K$ depends only on the geometric realization of~$K$ (cf. Rotman \cite[Theorem~7.36]{Rotm1988};
see also Reynaud~\cite[Section~2]{Reyn2003}).
By Proposition~\ref{P:VG2rGL}, it follows that~$G$ is a doubly free factor of~$\HG{\Sim{P}}$.
By Proposition~\ref{P:Meas2Val}, $\HG{\Sim{P}}=\HG{P}$, thus $G$ is a doubly free factor of~$\HG{P}$.
Moreover, it follows from Corollary~\ref{C:rGpPgcd} that~$\HM{P}$ is a gcd-monoid.

Finally, if~$P$ is finite, then~$\HM{P}$ is finitely presented, thus so is each of its retracts.
Now every free factor is a retract.
\end{proof}

We will see in the next section that if~$P$ is finite (which can be ensured if{f}~$G$ is finitely presented), then the interval monoid~$\HM{P}$ can be embedded into a free monoid.


\section{Embedding gcd-monoids into free monoids}
\label{S:SubMonFM}

In contrast to Proposition~\ref{P:rGPintoGrp}, not every gcd-monoid can be embedded into a free monoid, or even into a free group.
For example, the \emph{braid monoid}~$B_3^+$, defined by the generators~$a$, $b$ and the unique relation $aba=bab$, is a gcd-monoid (cf. Dehornoy \cite[Proposition~IX.1.26]{DDGKM}), but it cannot be embedded into any free group:
for example, any elements~$a$ and~$b$ in a free group such that $aba=bab$ satisfy $(a^2b)^2=(aba)^2$, thus, since free groups are bi-orderable (see, for example, Dehornoy \emph{et al.} \cite[Subsection~9.2.3]{DDRW02}), $a^2b=aba$, and thus $ab=ba$, and hence (since $aba=bab$) $a=b$, which does not hold in~$B_3^+$.

However, for interval monoids the situation is different.

It is obvious that the assignment $P\mapsto\HM{P}$ (resp., $P\mapsto\HG{P}$) defines a Functor, from the Category~$\PO$ of all posets, to the Category~$\MON$ (resp., $\GP$) of all monoids (resp., groups).
More can be said.

\begin{lemma}\label{L:rGp(f)}
Let~$P$ and~$Q$ be posets and let $f\colon P\to Q$ be an isotone map.
Then there is a unique monoid homomorphism $\HM{f}\colon\HM{P}\to\HM{Q}$ such that
 \[
 \HM{f}([x,y]_P)=[f(x),f(y)]_Q\quad\text{whenever }
 x\leq y\text{ in }P\,.
 \]
Moreover, if~$f$ is one-to-one, then so is~$\HM{f}$.
\end{lemma}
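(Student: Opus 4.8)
The plan is to produce $\HM{f}$ from the universal property of $\Um{\Cat{P}}$ and to establish injectivity using the reduced-sequence description of interval monoids. For the existence part, first I would observe that, since $f$ is isotone, the assignment $[x,y]_P\mapsto[f(x),f(y)]_Q$ is a well-defined functor $\Cat{P}\to\Cat{Q}$: it sends each identity $[x,x]_P$ to the identity $[f(x),f(x)]_Q$, and it respects the partial multiplication of $\Cat{P}$ because $x\leq y\leq z$ in $P$ forces $f(x)\leq f(y)\leq f(z)$ in $Q$. Composing this functor with $\eps_{\Cat{Q}}\colon\Cat{Q}\to\HM{Q}$ yields a functor from $\Cat{P}$ to the monoid $\HM{Q}$, which, by the universal property of $\HM{P}=\Um{\Cat{P}}$ (Proposition~\ref{P:UmSEUniv}), factors uniquely through $\eps_{\Cat{P}}$ via a monoid homomorphism $\HM{f}\colon\HM{P}\to\HM{Q}$; unravelling the definitions gives $\HM{f}([x,y]_P)=[f(x),f(y)]_Q$. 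Uniqueness is immediate, since the $[x,y]_P$ generate $\HM{P}$ as a monoid. (Equivalently, one checks directly that $[x,y]_P\mapsto[f(x),f(y)]_Q$ is compatible with the defining relations~\eqref{Eq:ReflrGP} and~\eqref{Eq:TransrGP}.)

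For injectivity when $f$ is one-to-one, I would work with the normal form of Remark~\ref{Rk:Cat2SE}: every $\ba\in\HM{P}$ is written uniquely as $\ba=[x_1,y_1]_P\cdots[x_n,y_n]_P$ with $x_i<y_i$ for all $i$ and $y_i\neq x_{i+1}$ for $1\leq i<n$, the latter being precisely the condition that $[x_i,y_i]_P$ and $[x_{i+1},y_{i+1}]_P$ admit no composite in $\Cat{P}$, i.e.\ that the sequence is reduced. The key point is that an injective isotone map is strictly monotone, so $x_i<y_i$ yields $f(x_i)<f(y_i)$, while injectivity turns $y_i\neq x_{i+1}$ into $f(y_i)\neq f(x_{i+1})$. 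Hence the expression $\HM{f}(\ba)=[f(x_1),f(y_1)]_Q\cdots[f(x_n),f(y_n)]_Q$ is again reduced, so it \emph{is} the normal form of $\HM{f}(\ba)$ in $\HM{Q}$. Consequently, if $\HM{f}(\ba)=\HM{f}(\bb)$, comparing normal forms in $\HM{Q}$ shows that the normal forms of $\ba$ and $\bb$ have equal length and the same entries after applying $f$ coordinatewise; injectivity of $f$ then forces those normal forms to coincide, so $\ba=\bb$.

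I do not anticipate a genuine obstacle: the whole argument is bookkeeping once the normal form is in place. The only step deserving a line of care is the precise shape of the reducedness condition in $\Cat{P}$ (that ``$[x,y]_P$ and $[x',y']_P$ compose'' means exactly ``$y=x'$''), together with the elementary remark that an injective isotone map is strictly increasing --- both immediate from the definitions. As an alternative route to injectivity, avoiding the normal-form computation, one could note that the square formed by $\ol{\mu}_P\colon\HM{P}\to\Fg{P}$ and $\ol{\mu}_Q\colon\HM{Q}\to\Fg{Q}$ from Proposition~\ref{P:rGPintoGrp} and the maps induced by $f$ commutes --- $\ol{\mu}_Q\circ\HM{f}$ and the composite of $\ol{\mu}_P$ with the homomorphism $\Fg{P}\to\Fg{Q}$ induced by $f$ agree on generators, both sending $[x,y]_P$ to $f(x)^{-1}f(y)$ --- and that $\ol{\mu}_P$ and the induced homomorphism $\Fg{P}\to\Fg{Q}$ are both injective when $f$ is; hence $\ol{\mu}_Q\circ\HM{f}$, and therefore $\HM{f}$, is injective.
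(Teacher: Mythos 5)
Your proof is correct and follows essentially the same route as the paper: existence and uniqueness from the universal property of $\HM{P}=\Um{\Cat{P}}$, and injectivity by observing that an injective isotone $f$ sends the reduced form of $\ba$ to a reduced sequence in $\Cat{Q}$ (strict inequalities and the non-composability condition $y_i\neq x_{i+1}$ are preserved), so uniqueness of reduced forms recovers $\ba$ from $\HM{f}(\ba)$. Your alternative argument via the commuting square with $\ol{\mu}_P$ and $\ol{\mu}_Q$ is also valid, though the paper does not use it.
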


\begin{proof}
The existence and uniqueness of~$\HM{f}$ follow trivially from the universal property defining~$\HM{P}$.
Now assume that~$f$ is one-to-one.
Let~$\ba\in\HM{P}$, written in reduced form as
 \[
 \ba=[x_1,y_1]_P\cdot[x_2,y_2]_P\cdots[x_n,y_n]_P\,,
 \]
where each $x_i<y_i$ in~$P$, and $y_i\neq x_{i+1}$ whenever $1\leq i<n$.
Then
 \begin{equation}\label{Eq:redformfa}
 \HM{f}(\ba)=[f(x_1),f(y_1)]_Q\cdot[f(x_2),f(y_2)]_Q
 \cdots[f(x_n),f(y_n)]_Q\,,
 \end{equation}
and since~$f$ is one-to-one, the sequence written on the right hand side of~\eqref{Eq:redformfa} is also reduced.
By the uniqueness of the reduced form of an element of~$\HM{Q}$ (cf. Lemma~\ref{L:x2Ered}), it follows that~$\HM{f}(\ba)$ determines the finite sequence
 \[
 (f(x_1),f(y_1),\dots,f(x_n),f(y_n))\,.
 \]
Since~$f$ is one-to-one, $\HM{f}(\ba)$ also determines the finite sequence
 \[
 (x_1,y_1,\dots,x_n,y_n)\,,
 \]
thus it determines~$\ba$.
\end{proof}

The following easy result, which improves Proposition~\ref{P:rGPintoGrp} in the finite case, is the central part of the present section.

\begin{proposition}\label{P:rGpLinExt}
Let~$P$ be a finite poset.
Then $\HM{P}$ embeds into a \pup{finitely generated} free monoid.
\end{proposition}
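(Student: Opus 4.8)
The plan is to reduce to the case of a finite chain, for which the interval monoid is literally a finitely generated free monoid, by exploiting the functoriality of $P\mapsto\HM{P}$ together with the injectivity clause of Lemma~\ref{L:rGp(f)}.

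First I would pick a linear extension $\sqsubseteq$ of the partial ordering $\leq$ of $P$ --- that is, a total ordering of the underlying set of $P$ such that $x\leq y$ implies $x\sqsubseteq y$; such an ordering exists because $P$ is finite (iterate the removal of a minimal element). Write $C$ for the resulting chain $(P,\sqsubseteq)$, and let $N$ be the number of elements of $P$. Then $\id_P$, regarded as a map from $(P,\leq)$ to $(P,\sqsubseteq)=C$, is isotone (this is exactly what it means for $\sqsubseteq$ to extend $\leq$) and one-to-one, so Lemma~\ref{L:rGp(f)} provides a one-to-one monoid homomorphism $\HM{\id_P}\colon\HM{P}\hookrightarrow\HM{C}$.

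It then remains to check that $\HM{C}\cong\Fm{N-1}$. Enumerating $C=\set{q_1\sqsubset q_2\sqsubset\dots\sqsubset q_N}$, the relations \eqref{Eq:ReflrGP} and~\eqref{Eq:TransrGP} defining $\HM{C}$ let one rewrite every generator as $[q_i,q_j]=[q_i,q_{i+1}]\cdot[q_{i+1},q_{i+2}]\cdots[q_{j-1},q_j]$ for $i\leq j$, so $\HM{C}$ is generated by the $N-1$ ``consecutive'' intervals $[q_i,q_{i+1}]$. Conversely, the assignment $[q_i,q_j]\mapsto\sigma_i\sigma_{i+1}\cdots\sigma_{j-1}$ (the empty word when $i=j$) visibly respects \eqref{Eq:ReflrGP} and~\eqref{Eq:TransrGP}, hence extends to a monoid homomorphism $\HM{C}\to\Fm{\set{\sigma_1,\dots,\sigma_{N-1}}}$; checking on generators shows that it is a two-sided inverse to the homomorphism $\sigma_i\mapsto[q_i,q_{i+1}]$, so both maps are isomorphisms. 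Composing $\HM{\id_P}$ with this isomorphism yields the announced embedding $\HM{P}\hookrightarrow\Fm{N-1}$. (Equivalently, one may reach $\HM{C}\cong\Fm{N-1}$ by Tietze transformations on the presentation of $\HM{C}$, eliminating all non-consecutive generators $[q_i,q_j]$ and, with them, all the relations.)

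This argument is essentially obstacle-free; the only point I would take care to flag is that finiteness of $P$ is used twice and cannot be dropped: it makes $N-1$ finite, so that the target is a \emph{finitely generated} free monoid, and --- more to the point --- it is what makes $\HM{C}$ free at all, since for a densely ordered chain such as~$\QQ$ no interval is an atom of $\HM{\QQ}$, so that monoid is not a free monoid.
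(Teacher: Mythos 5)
Your proposal is correct and follows exactly the paper's argument: embed $\HM{P}$ into $\HM{\widehat{P}}$ for a linear extension $\widehat{P}$ via Lemma~\ref{L:rGp(f)}, then observe that the interval monoid of a finite chain is free on the consecutive intervals. You merely spell out in more detail the step the paper calls ``straightforward to verify,'' which is fine.
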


\begin{proof}
It is well known that every poset~$P$ has a linear extension~$\widehat{P}$, that is, a linear ordering on the underlying set of~$P$ extending the ordering of~$P$.
By applying Lemma~\ref{L:rGp(f)} to the identity map $f\colon P\to\widehat{P}$, we see that~$\HM{f}$ is a monoid embedding $\HM{P}\hookrightarrow\HM{\widehat{P}}$.
Now denote by~$x_1$, \dots, $x_n$ the elements of~$\widehat{P}$ listed in increasing order.
It is straightforward to verify that~$\HM{\widehat{P}}$ is a free monoid, with free generators $[x_i,x_{i+1}]_{\widehat{P}}$ where $1\leq i<n$.
\end{proof}

\begin{example}\label{Ex:rGpLinExt}
Proposition~\ref{P:rGpLinExt} does not extend to infinite posets.
For example, letting $P=\set{0}\cup\setm{1/n}{n\in\NN\setminus\set{0}}$, the sequence $\vecm{[0,1/n]}{n\in\NN\setminus\set{0}}$ is strictly decreasing in~$\HM{P}$ with respect to the left divisibility ordering.
Hence, $\HM{P}$ cannot be embedded into any free monoid.
\end{example}

As the following example shows, the most natural converse of Proposition~\ref{P:rGpLinExt}, namely whether every finitely presented submonoid of a finitely generated free monoid is an interval monoid, does not hold.

\begin{examplepf}\label{Ex:SubFreenotInt}
A finitely presented submonoid of~$\NN$ which is not the interval monoid of any poset.
\end{examplepf}

\begin{proof}
The submonoid $M=\NN\setminus\set{1}$ of the free monoid~$\NN$ can be defined by the generators~$a$, $b$ and the relations $ab=ba$, $a^3=b^2$
(think of~$a$ and~$b$ as~$2$ and~$3$, respectively, and remember that we are dealing with the \emph{additive} structure of~$\NN$).

Suppose that $M\cong\HM{P}$, for a poset~$P$.

Since the elements~$a$ and~$b$ are atoms of~$M$ (i.e., none of them can be expressed as the sum of two nonzero elements of~$M$), each of them is a standard generator of~$\HM{P}$, that is, $a=[x,y]$ and $b=[x',y']$ for some $x<y$ and $x'<y'$ in~$P$.
Since the finite sequences $([x,y],[x,y],[x,y])$ and $([x',y'],[x',y'])$ are both reduced, and they represent~$a^3$ and~$b^2$, respectively, we get $a^3\neq b^2$, a contradiction.
\end{proof}

\begin{remark}\label{Rk:SubFreenotInt}
The argument of Example~\ref{Ex:SubFreenotInt} can be easily expanded to prove that \emph{A submonoid of~$\NN$ is the interval monoid of a poset if{f} it is cyclic \pup{i.e., of the form $m\NN$ where $m\in\NN$}}.

For those variants of Example~\ref{Ex:SubFreenotInt}, the difference between ``finitely generated'' and ``finitely presented'' is immaterial, because every finitely generated commutative monoid is finitely presented (this is Redei's Theorem, see for example Freyd~\cite{Freyd1968}) and every submonoid of~$\NN$ is finitely generated.
\end{remark}

In the non-commutative case, the situation becomes different.
To begin with, a finitely generated submonoid of a free monoid may not be finitely presented (cf. Spehner \cite[Theorem~3.2]{Speh1977} or \cite[Example~2.9]{Speh1989b}).
Since interval monoids of finite posets are finitely presented, let us focus attention on finitely presented monoids.

\begin{examplepf}\label{Ex:Nongcd}
A finitely presented submonoid of a free monoid, which is also a gcd-monoid, but which is not isomorphic to the interval monoid of any poset.
\end{examplepf}

\begin{proof}
Denote by~$M_6$ the monoid defined by the set of generators $\gS=\set{a,b,c,d,e,f}$ and the relations
 \begin{equation}\label{Eq:PresM6}
 ae=cb\,,\quad da=bf\,.
 \end{equation}
We shall analyze the monoid~$M_6$ by using the tools of Dehornoy \cite[Section~II.4]{DDGKM} (originating in Dehornoy~\cite{Deh2003}).
The presentation~\eqref{Eq:PresM6} is right complemented, which means that for every pair $(s,t)$ of generators, there is at most one relation of the form $sx=ty$ in the presentation.
Moreover, the right complements~$x$ and~$y$ solving this problem are themselves generators.
This makes it very easy to verify the right cube condition (cf. \cite[Subsection~II.4.4]{DDGKM}) on that presentation.
It thus follows from the results of \cite[Section~II.4]{DDGKM} that~$M_6$ is a 
left gcd-monoid.
A symmetric argument yields that~$M$ is a right gcd-monoid.

Further, in order to verify the \emph{right $3$-Ore condition} introduced in Dehornoy~\cite{Dis}, namely that any three elements of~$M_6$, which pairwise admit a right multiple, admit a common right multiple, it suffices (cf. \cite[Section~5]{Dis}) to verify that condition on any subset~$X$ of~$M_6$ containing the atoms and closed under right complementation; in the present case, just take $X=\gS$.
The \emph{left $3$-Ore condition} is verified similarly.
By \cite[Section~4]{Dis}, it follows that~$M_6$ can be embedded into its universal group $G=\Ug{M_6}$.
(The latter fact is also a straightforward application of Adjan's Theorem~\cite{Adj1966}, see also Remmers \cite[Theorem~4.6]{Remm1980}.)

Now denote by~$x$ and~$y$ two new generators and set $\gO=\set{a,b,x,y}$.
Denote by~$M'_6$ the submonoid of~$\Fm{\gO}$ generated by $\gS'=\set{a',b',c',d',e',f'}$ where $a'=a$, $b'=b$, $c'=ax$, $d'=by$, $e'=xb$, $f'=ya$.
Since $a'e'=c'b'$ and $d'a'=b'f'$, there is a unique monoid homomorphism $\eps\colon M_6\to M'_6$ sending each generator of~$\Sigma$ to its primed version (i.e., $\eps(a)=a'$, and so on).

The following claim shows that~$M_6$ embeds into the free monoid~$\Fm{\gO}$.

\begin{sclaim}
The map~$\eps$ is an isomorphism from~$M_6$ onto~$M'_6$. 
\end{sclaim}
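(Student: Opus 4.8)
## Proof plan

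The plan is to show that $\eps\colon M_6\to M'_6$ is an isomorphism by exhibiting an inverse homomorphism $\pi\colon M'_6\to M_6$, and then checking $\pi\circ\eps=\id_{M_6}$ (which, since $\eps$ is surjective by construction, forces $\eps$ to be injective as well). Since $M'_6$ is a submonoid of the free monoid $\Fm{\gO}$, a homomorphism out of $M'_6$ is most safely produced by first defining a homomorphism on all of $\Fm{\gO}$ and then restricting; so the first step is to define $\widehat\pi\colon\Fm{\gO}\to M_6$ on the free generators by a careful choice of images of $a,b,x,y$, and then set $\pi=\widehat\pi\res M'_6$.

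The choice of $\widehat\pi$ should be read off from the definitions $c'=ax$, $d'=by$, $e'=xb$, $f'=ya$ together with the relations $a'e'=c'b'$ and $d'a'=b'f'$ of $M_6$: one wants $\widehat\pi(a')=a$, $\widehat\pi(b')=b$, and $\widehat\pi(c')=c$, $\widehat\pi(d')=d$, $\widehat\pi(e')=e$, $\widehat\pi(f')=f$. The natural guess is $\widehat\pi(a)=a$, $\widehat\pi(b)=b$, $\widehat\pi(x)=$ (a word $w_x$ in $M_6$ with $aw_x=c$ and $w_xb=e$), $\widehat\pi(y)=$ (a word $w_y$ with $bw_y=d$ and $w_yb? $). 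Here one must be a little careful: $c,d,e,f$ are themselves among the generators of $M_6$, not products, so there is no single element $w_x\in M_6$ with $aw_x=c$. The correct move is instead to use the presentation: since $M_6$ is defined by $ae=cb$, $da=bf$, an arbitrary word in $\gS$ is being mapped, and what we really need is a homomorphism $\widehat\pi$ from $\Fm\gO$ that, when precomposed with $\eps$, returns each generator of $\gS$. Concretely, define $\widehat\pi$ on $\Fm\gO$ by sending $a\mapsto a$, $b\mapsto b$, and choosing $x\mapsto$ and $y\mapsto$ elements of $M_6$ so that the composite $\widehat\pi(a')=\widehat\pi(a)=a$, $\widehat\pi(c')=\widehat\pi(ax)=a\cdot\widehat\pi(x)=c$, etc.; this pins down $\widehat\pi(x)$ and $\widehat\pi(y)$ as formal left-quotients, which exist because $M_6$ is a left gcd-monoid in which $a$ left-divides $c$ and $b$ left-divides $d$ --- or, more simply, one observes directly from the defining relations that such elements exist. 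The routine verification is that $\widehat\pi$ respects no relations of $\Fm\gO$ (there are none, it is free), so $\widehat\pi$ is well defined, and then that $\pi=\widehat\pi\res M'_6$ lands in $M_6$ and satisfies $\pi\circ\eps=\id$, which reduces to checking the six identities $\pi(\eps(g))=g$ for $g\in\gS$.

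The main obstacle I anticipate is precisely the point flagged above: $M'_6$ is a submonoid of $\Fm\gO$, so I cannot blithely define $\pi$ on generators of $M'_6$ and invoke a presentation of $M'_6$ (we do not yet know one --- indeed, establishing the claim is part of determining it). The clean way around this is to define $\pi$ on the \emph{ambient} free monoid $\Fm\gO$ (where there is nothing to check) and then restrict; the price is that one must then verify $\pi(M'_6)\subseteq M_6$, which is immediate since $M'_6$ is generated by the primed letters and each primed letter maps into $M_6$. After that, surjectivity of $\eps$ is automatic from the definition of $M'_6$, injectivity follows from $\pi\circ\eps=\id_{M_6}$, and bijectivity plus homomorphism gives the isomorphism. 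A secondary subtlety is making sure the chosen images $\widehat\pi(x),\widehat\pi(y)\in M_6$ are consistent with \emph{both} constraints each imposes (from $c'=ax$ and from $e'=xb$ for $x$; from $d'=by$ and from $f'=ya$ for $y$); this consistency is exactly what the two relations $ae=cb$, $da=bf$ of $M_6$ guarantee, so it is where those relations get used, and it should be spelled out explicitly rather than left to the reader.
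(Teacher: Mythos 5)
Your overall strategy (define a homomorphism on the ambient free monoid $\Fm{\gO}$, restrict it to $M'_6$, and show that the composite with $\eps$ is injective) is the right one, and you correctly identified that one cannot work with an unknown presentation of $M'_6$. But there is a genuine gap at the crucial point: the elements $w_x,w_y$ you need do \emph{not} exist in $M_6$. You require $a\cdot\widehat\pi(x)=c$ with $\widehat\pi(x)\in M_6$, i.e.\ that $a$ left-divides $c$ in $M_6$. This is false: the presentation $ae=cb$, $da=bf$ is homogeneous, so $M_6$ is conical and graded by length, and $a$ and $c$ are distinct atoms; hence the only left divisors of $c$ are $1$ and $c$, and no element of $M_6$ satisfies $aw_x=c$. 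Being a left gcd-monoid does not help here --- in fact $a\gcd c=1$. So no homomorphism $\Fm{\gO}\to M_6$ with the required values on $a,b,x,y$ exists, and the proposed retraction $\pi$ cannot be constructed.

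The paper's fix is to change the target: it maps $\Fm{\gO}$ not into $M_6$ but into the universal group $G=\Ug{M_6}$, where the needed ``quotients'' do exist, namely $\ol{x}=a^{-1}c=eb^{-1}$ and $\ol{y}=b^{-1}d=fa^{-1}$ (the two expressions for each agree in $G$ precisely because of the relations $ae=cb$ and $da=bf$ --- this is the consistency check you anticipated). Restricting to $M'_6$ gives $\eta$ with $\eta(a')=a$, $\eta(c')=c$, etc., so that $\eta\circ\eps$ is the inclusion $M_6\hookrightarrow G$. This is where the earlier part of the proof of Example~\ref{Ex:Nongcd} is used essentially: the $3$-Ore condition (or Adjan's theorem) guarantees that $M_6\to G$ is one-to-one, hence $\eps$ is one-to-one, and surjectivity is by construction. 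Without first establishing that $M_6$ embeds into its universal group, this argument --- and any variant of your plan --- cannot be completed.
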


\begin{scproof}
Within the group $G=\Ug{M_6}$, we set $\ol{x}=a^{-1}c=eb^{-1}$ and $\ol{y}=b^{-1}d=fa^{-1}$.
We denote by $\gf\colon\Fm{\gO}\to G$ the unique monoid homomorphism such that $\gf(a)=a$, $\gf(b)=b$, $\gf(x)=\ol{x}$, and $\gf(y)=\ol{y}$, and we denote by~$\eta$ the restriction of~$\gf$ to~$M'_6$.
Then $\eta(a')=a$, $\eta(b')=b$, $\eta(c')=a\ol{x}=c$, $\eta(d')=b\ol{y}=d$, $\eta(e')=\ol{x}b=e$, and $\eta(f')=\ol{y}a=f$, whence~$\eta\circ\eps$ is the inclusion map $M_6\hookrightarrow G$.
In particular, $\eps$ is one-to-one.
But~$\eps$ is, by construction, surjective.
\end{scproof}

Therefore, in order to conclude the proof of Example~\ref{Ex:Nongcd}, it suffices to prove that~$M_6$ is not isomorphic to the interval monoid of any poset.
Let $M_6\cong\HM{P}$, for a poset~$P$.
Since the elements of~$\gS$ are all atoms of~$M_6$, they are all standard generators of~$\HM{P}$.
Since $ae=cb$ and $a\neq c$, none of the finite sequences $(a,e)$ and $(c,b)$ is reduced, thus there are elements $o,p,q,i\in P$ such that $o<p<i$ and $o<q<i$ within~$P$, and $a=[o,p]$, $e=[p,i]$, $c=[o,q]$, $b=[q,i]$.
In particular, $\min a=o<q=\min b$.
A similar argument, applied to the relation $bf=da$, yields the relation $\min b<\min a$; a contradiction.
\end{proof}

\section{Gcd-monoids arising from extreme spindles}
\label{S:Spindle}

In this section we introduce a class of categories, associated to certain intervals in posets, closely related to the categories~$\Cat{P}$.
Those categories are a source of counterexamples in the  paper Dehornoy and Wehrung~\cite{DW1}; in particular, as we will see in that paper, the universal monoid of such a category, although cancellative, may not embed into any group.

Those categories (cf. Definition~\ref{D:SpdCat}) are built from the following concept.

\begin{definition}\label{D:Spindle}
Let~$P$ be a poset and let $u<v$ in~$P$,
with $[u,v]$ of height $\geq2$ (i.e., $u<z<v$ for some~$z$).
We say that the closed interval~$[u,v]$ is a \emph{spindle} of~$P$ if the comparability relation on the open interval $\oo{u,v}\eqdef\setm{x\in P}{u<x<v}$ is an equivalence relation.
If, in addition, $u$ is a minimal element of~$P$ and $v$ is a maximal element of~$P$,
we say that $[u,v]$ is an \emph{extreme spindle} of~$P$.
\end{definition}

In what follows, we shall denote by~$\cC_{u,v}$ the set of all maximal chains of~$[u,v]$.

\begin{proposition}\label{P:SpindleChar}
The following are equivalent, for any closed interval $[u,v]$, of height~$\geq2$, in a poset~$P$:
\begin{enumeratei}
\item\label{OrSpdDef}
$[u,v]$ is a spindle of~$P$.

\item\label{ChSpdDef}
Any two distinct maximal chains of $[u,v]$ meet in $\set{u,v}$.
\end{enumeratei}
\end{proposition}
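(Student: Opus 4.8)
The plan is to prove the two implications separately, each reducing to two elementary facts about chains in a poset: (a) every chain is contained in a maximal chain (Zorn's Lemma); and (b) if~$C$ is a maximal chain of a poset~$Q$ and $x\in Q\setminus C$, then~$x$ is incomparable to some element of~$C$ (else $C\cup\set{x}$ would be a strictly larger chain). I also record the trivial facts that the comparability relation on any set is reflexive and symmetric, hence is an equivalence relation if{f} it is transitive, and that within~$[u,v]$ both~$u$ and~$v$ are comparable to every element; in particular, no element of~$\oo{u,v}$ is incomparable to~$u$ or to~$v$.

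To prove \textup{(i)}$\Rightarrow$\textup{(ii)}, I argue by contraposition. Assume there are \emph{distinct} maximal chains~$C_1$ and~$C_2$ of~$[u,v]$ sharing some element $w\in(C_1\cap C_2)\setminus\set{u,v}$; I will exhibit a failure of transitivity of comparability on~$\oo{u,v}$. Since $C_1\neq C_2$, pick, say, $x\in C_1\setminus C_2$. By fact~(b), applied inside the poset~$[u,v]$, there is $t\in C_2$ incomparable to~$x$. Then neither~$x$ nor~$t$ equals~$u$ or~$v$, so $x,t\in\oo{u,v}$; and $w\in\oo{u,v}$ because $w\in[u,v]\setminus\set{u,v}$. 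Now $w$ and~$x$ both lie in the chain~$C_1$, hence are comparable; $w$ and~$t$ both lie in the chain~$C_2$, hence are comparable; yet~$x$ and~$t$ are incomparable. So comparability on~$\oo{u,v}$ is not transitive, i.e.\ $[u,v]$ is not a spindle of~$P$.

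To prove \textup{(ii)}$\Rightarrow$\textup{(i)}, let $x,y,z\in\oo{u,v}$ with~$x$ comparable to~$y$ and~$y$ comparable to~$z$; I must show~$x$ comparable to~$z$. The cases $x=y$, $y=z$ and $x=z$ are immediate, and so, by transitivity of~$<$, are the configurations $x<y<z$ and $z<y<x$. The only remaining case is $y<x$ and $y<z$ (the case $x<y$, $z<y$ being symmetric). Then $u<y<x<v$ and $u<y<z<v$ are chains of~$[u,v]$; extend them, via fact~(a), to maximal chains~$C_1$ and~$C_2$ of~$[u,v]$, so $x\in C_1$ and $z\in C_2$. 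Since $y\in(C_1\cap C_2)\setminus\set{u,v}$, hypothesis~\textup{(ii)} forces $C_1=C_2$, and then~$x$ and~$z$ lie in the common chain~$C_1$, hence are comparable, as required.

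Both implications are short once facts~(a) and~(b) have been isolated; the only point requiring a little care --- and the one I expect to spend the most words on --- is the bookkeeping in \textup{(ii)}$\Rightarrow$\textup{(i)} that reduces the four a priori directional patterns of the triple $(x,y,z)$ to the single pattern $y<x$, $y<z$ and its mirror image, together with the routine verifications that each auxiliary element produced along the way genuinely belongs to the open interval~$\oo{u,v}$ rather than sitting at an endpoint.
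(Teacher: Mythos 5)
Your proof is correct and in substance matches the paper's: both directions come down to the correspondence between the comparability classes of $\oo{u,v}$ and the maximal chains of $[u,v]$ with the endpoints removed, using that every chain extends to a maximal one and that $u,v$ are comparable to everything in $[u,v]$. The only organizational difference is that the paper proves (i)$\Rightarrow$(ii) directly, by identifying the maximal chains of $[u,v]$ as exactly the sets $X\cup\set{u,v}$ for $X$ a comparability class, whereas you argue by contraposition via an incomparability witness; both work.
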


\begin{proof}
Denote by~$\sim$ the comparability relation on $\oo{u,v}$.

\eqref{OrSpdDef}$\Rightarrow$\eqref{ChSpdDef}.
Assume~\eqref{OrSpdDef} and set $\Sigma=\oo{u,v}/{\sim}$.
Every element of~$\Sigma$ is a chain and $\oo{u,v}$ is the disjoint union of~$\Sigma$.
It follows that $[u,v]=\bigcup\vecm{X\cup\set{u,v}}{X\in\Sigma}$.
The sets $X\cup\set{u,v}$, where $X\in\Sigma$, are exactly the maximal chains of $[u,v]$, and they meet pairwise at $\set{u,v}$.

\eqref{ChSpdDef}$\Rightarrow$\eqref{OrSpdDef}.
By assumption, the elements of~$\cC_{u,v}$ are pairwise meeting at $\set{u,v}$, thus every $x\in\oo{u,v}$ belongs to a unique $C(x)\in\cC_{u,v}$.
It follows that $x\sim y$ if{f} $C(x)=C(y)$, for all $x,y\in\oo{u,v}$.
The desired conclusion follows easily.
\end{proof}

\begin{definition}\label{D:SpdCat}
For any extreme spindle $[u,v]$ in a poset~$P$, we endow the set
 \[
 \Cat{P,u,v}\eqdef\pI{\Cat{P}\setminus\set{[u,v]}}\cup\cC_{u,v}\,,
 \]
with the (partial) multiplication given by
 \begin{align}
 [u,u]\cdot X=X\cdot[v,v]&=X\,,&&
 \text{whenever }X\in\cC_{u,v}\,,\label{Eq:XneutralMC}\\
 [x,y]\cdot[y,z]&=[x,z]\,,&&
 \text{whenever }x\leq y\leq z\text{ and }
 (x,z)\neq(u,v)\,,\label{Eq:xyyzxzC}\\
 [u,z]\cdot[z,v]&=Z\,,&&
 \text{whenever }u<z<v\,,\ Z\in\cC_{u,v}\,,
 \text{ and }z\in Z\,.\label{Eq:uxxvCx}
 \end{align}
\end{definition}

The verification of the following technical lemma is tedious, but straightforward, and we omit its proof.

\begin{lemma}\label{L:SpindleCat}
Let $[u,v]$ be an extreme spindle in a poset~$P$.
Then $\Cat{P,u,v}$, endowed with the multiplication given in Definition~\textup{\ref{D:SpdCat}}, is a category.
Furthermore, the divisibility orderings~$\dive$ and~$\divet$, on that category, are given by
 \begin{align*}
 [x,y]\dive[x',y']&\quad\text{if{f}}\quad
 x=x'\text{ and }y\leq y'\,,
 &&\text{whenever }[x,y],[x',y']\neq[u,v]\,,\\
 Z\ndive[x,y]&\,,
 &&\text{whenever }[x,y]\neq[u,v]
 \text{ and }Z\in\cC_{u,v}\,,\\
 [x,y]\dive Z&\text{ if{f} }x=u\text{ and }y\in Z\,,
 &&\text{whenever }[x,y]\neq[u,v]
 \text{ and }Z\in\cC_{u,v}\,,\\
 X\dive Y&\text{ if{f} }X=Y\,,
 &&\text{whenever }X,Y\in\cC_{u,v}\,,\\
 \intertext{and}
 [x,y]\divet[x',y']&\quad\text{if{f}}\quad
 x'\leq x\text{ and }y=y'\,,
 &&\text{whenever }[x,y],[x',y']\neq[u,v]\,,\\
 Z\ndivet[x,y]&\,,
 &&\text{whenever }[x,y]\neq[u,v]
 \text{ and }Z\in\cC_{u,v}\,,\\
 [x,y]\divet Z&\text{ if{f} }y=v\text{ and }x\in Z\,,
 &&\text{whenever }[x,y]\neq[u,v]
 \text{ and }Z\in\cC_{u,v}\,,\\
 X\divet Y&\text{ if{f} }X=Y\,,
 &&\text{whenever }X,Y\in\cC_{u,v}\,.\\
  \end{align*}
\end{lemma}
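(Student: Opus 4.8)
The plan is to verify the two assertions separately: first that $\Cat{P,u,v}$, with the multiplication of Definition~\ref{D:SpdCat}, satisfies the axioms of Definition~\ref{D:Semicat}; then that the preorderings $\dive$ and $\divet$ have the displayed form (from which, in particular, antisymmetry is visible, so they are genuine orderings). To begin with I would pin down the identities. Since $[u,v]$ has height $\geq 2$ we have $u\neq v$, so $[u,u]$ and $[v,v]$ both survive in $\Cat{P,u,v}$, and one checks that $\Idt{\Cat{P,u,v}}=\setm{[x,x]}{x\in P}$: no maximal chain $Z$ is idempotent ($Z\cdot Z$ is undefined because $u\neq v$), and no $[x,y]$ with $x<y$ is idempotent either. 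Using the degenerate instances of \eqref{Eq:xyyzxzC} together with \eqref{Eq:XneutralMC}, the source and target of $[x,y]\neq[u,v]$ are $[x,x]$ and $[y,y]$, and those of $Z\in\cC_{u,v}$ are $[u,u]$ and $[v,v]$, so every arrow has a source and a target identity. The one structural observation that makes everything below routine is that, because $u$ is \emph{minimal} and $v$ is \emph{maximal} in $P$, an arrow with target $u$ can only be $[u,u]$ and an arrow with source $v$ can only be $[v,v]$; this is exactly the role of ``extreme'', and it shows that a maximal chain $Z$ can be composed only with $[u,u]$ on the left or with $[v,v]$ on the right. In particular two maximal chains are never composable.

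Next I would check the semicategory axiom. Definedness is immediate: forming a product in $\Cat{P,u,v}$ never alters the source of the left factor nor the target of the right factor, so $x\cdot(y\cdot z)\downarrow$, $(x\cdot y)\cdot z\downarrow$, and the conjunction ``$x\cdot y\downarrow$ and $y\cdot z\downarrow$'' are all equivalent to $\tg{x}=\sr{y}$ and $\tg{y}=\sr{z}$. For the equality $x(yz)=(xy)z$ I would run through a composable triple $x=[a,b]$, $y=[b,c]$, $z=[c,d]$ (a maximal chain among $x,y,z$ forces, by the observation above, the other two to be the appropriate identities, and associativity then collapses to \eqref{Eq:XneutralMC}). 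When none of the partial products $xy$, $yz$, $x(yz)$ equals a maximal chain, the identity is inherited verbatim from associativity in $\Cat{P}$, i.e.\ from transitivity of $\leq$ in $P$. The remaining configurations are precisely those in which \eqref{Eq:uxxvCx} fires, producing the maximal chain through some $z_0\in\oo{u,v}$; here the needed fact is that $[u,v]$ is a \emph{spindle}, so by Proposition~\ref{P:SpindleChar} (and the description of $C(\cdot)$ in its proof) any two comparable elements of $\oo{u,v}$ lie on one and the same maximal chain. Thus $z_0\mapsto C(z_0)$ is well defined and constant along any chain of $\oo{u,v}$, and both groupings of such a triple evaluate to the same $C(z_0)$. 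This completes the proof that $\Cat{P,u,v}$ is a category.

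For the divisibility description it suffices, by the left--right symmetry built into Definition~\ref{D:SpdCat}, to handle $\dive$. The method is to read off, element by element and straight from \eqref{Eq:XneutralMC}--\eqref{Eq:uxxvCx}, the set of right multiples $\setm{ax}{ax\downarrow}$. A maximal chain $Z$ admits only $Z\cdot[v,v]=Z$, so its unique right multiple is itself, giving $Z\ndive[x,y]$ and $X\dive Y\Leftrightarrow X=Y$. An interval $[x,y]\neq[u,v]$ admits as right multiples the intervals $[x,r]$ with $r\geq y$ (these are the instances of \eqref{Eq:xyyzxzC} and its degenerate cases), together with, when $x=u$, the maximal chains obtained via $[u,y]\cdot[y,v]=C(y)$ for $u<y<v$ (by \eqref{Eq:uxxvCx}) and $[u,u]\cdot Z=Z$ for every $Z$ (by \eqref{Eq:XneutralMC}). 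Matching these lists against the claimed formulas is then purely mechanical and yields $[x,y]\dive[x',y']\Leftrightarrow(x=x'\text{ and }y\leq y')$ for intervals, and $[x,y]\dive Z\Leftrightarrow(x=u\text{ and }y\in Z)$; these exhaust all four types of pairs of elements. The displayed formulas make antisymmetry of $\dive$ transparent, and the statements for $\divet$ follow from the symmetry of Definition~\ref{D:SpdCat}.

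The only real obstacle is the amount of case bookkeeping in the associativity check and in the enumeration of right multiples; the single point at which anything beyond bookkeeping enters is the well-definedness of $z_0\mapsto C(z_0)$, which is exactly Proposition~\ref{P:SpindleChar}, everything else being forced by the minimality of $u$, the maximality of $v$, and associativity already available in $\Cat{P}$.
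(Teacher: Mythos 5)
Your proof is correct; note that the paper itself omits the proof of this lemma as ``tedious, but straightforward,'' so your direct verification is exactly the argument intended. You correctly isolate the only two non-mechanical points: that extremeness of $[u,v]$ (minimality of $u$, maximality of $v$) forces $a\cdot b\downarrow$ to coincide with $\tg a=\sr b$ and confines maximal chains to composition with identities, and that the spindle condition (via Proposition~\ref{P:SpindleChar}) makes $z\mapsto C(z)$ constant on chains of $\oo{u,v}$, which is what reconciles the two groupings $C(b)$ and $C(c)$ in the associativity check.
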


By identifying any $x\in P$ with the singleton interval $[x,x]=\set{x}$, we obtain that the source and target map, on~$\Cat{P,u,v}$, are given by
 \begin{align*}
 \sr[x,y]=x&
 \text{ and }\tg[x,y]=y\,,&&\text{whenever }[x,y]\neq[u,v]\,,\\
 \sr Z=u&
 \text{ and }\tg Z=v\,,&&\text{whenever }Z\in\cC_{u,v}\,.
 \end{align*}

\begin{proposition}\label{P:SpindleCat}
Let~$P$ be a poset such that $\HM{P}$ is a gcd-monoid, and let $[u,v]$ be an extreme spindle of~$P$.
Then $\Cat{P,u,v}$ is a gcd-category, and its universal monoid $\HM{P,u,v}\eqdef\Um{\Cat{P,u,v}}$ is a gcd-monoid.
\end{proposition}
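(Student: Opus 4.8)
The plan is to prove that $\Cat{P,u,v}$ is a gcd-category and then invoke Theorem~\ref{T:UmSgcdmon} to transfer the conclusion to $\HM{P,u,v}=\Um{\Cat{P,u,v}}$. By Definition~\ref{D:LeftgcdCat} this requires checking five properties of $\Cat{P,u,v}$: conicality, left and right cancellativity, the existence of left gcds of pairs with a common source, and the existence of right gcds of pairs with a common target. Conicality is immediate from Definition~\ref{D:SpdCat}: a product of two elements of $\Cat{P,u,v}$ is an identity only through the clause $[x,y]\cdot[y,z]=[x,z]$ with $x=z$, which forces $x=y=z$ and both factors to equal $[x,x]$; the other two product clauses~\eqref{Eq:XneutralMC} and~\eqref{Eq:uxxvCx} always return an element of $\cC_{u,v}$, and no such element is an identity since $[u,v]$ has height $\geq2$.

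For cancellativity I would introduce the map $\pi\colon\Cat{P,u,v}\to\Cat{P}$ acting as the identity on $\Cat{P}\setminus\set{[u,v]}$ and sending every $Z\in\cC_{u,v}$ to $[u,v]$; checking it against the three product clauses of Definition~\ref{D:SpdCat} (for~\eqref{Eq:uxxvCx}, both sides map to $[u,z]\cdot[z,v]=[u,v]$) shows $\pi$ is a functor. Since every hom-set of $\Cat{P}$ has at most one element, $\Cat{P}$ is trivially cancellative, so $ax=ay$ in $\Cat{P,u,v}$ gives $\pi(x)=\pi(y)$; if this value is not $[u,v]$ then $x=y$ outright, and otherwise $x,y\in\cC_{u,v}$ and $a$ is an arrow with target $u$, hence $a=[u,u]$ because $u$ is minimal in~$P$, so $x=ax=ay=y$. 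Right cancellativity is dual, using that $v$ is maximal in~$P$.

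The heart of the proof is the existence of gcds, which I would get by reading the common left divisors of two elements $a,b$ with $\sr a=\sr b=s$ straight off the divisibility tables of Lemma~\ref{L:SpindleCat}, according to whether $a$ and $b$ are intervals or lie in $\cC_{u,v}$. If both are intervals, $a=[s,w_a]$ and $b=[s,w_b]$, the common left divisors are precisely the $[s,w']$ with $w'$ a common lower bound of $w_a$ and $w_b$ in $\SUP{P}{s}$; since $\SUP{P}{s}$ is a \ms\ --- this is exactly where the hypothesis that $\HM{P}$ is a gcd-monoid enters, through Proposition~\ref{P:rGpPgcd} --- the left gcd is $[s,w_a\gcd w_b]$, and it cannot be the removed arrow $[u,v]$ because $v$ is maximal and $a\neq[u,v]$. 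If at least one of $a,b$ lies in $\cC_{u,v}$ then $s=u$, and the relevant geometric input is that two distinct maximal chains of the extreme spindle $[u,v]$ meet only in $\set{u,v}$ while any two comparable elements of $\oo{u,v}$ lie on a common maximal chain (Proposition~\ref{P:SpindleChar}); a short case split then collapses the set of common left divisors either to the divisors lying on a single chain, whose top is the interval $[u,w_a]$, or to $\set{[u,u]}$, so a left gcd always exists. The existence of right gcds of pairs with a common target is obtained dually, using that $\INF{P}{a}$ is a \js\ for every $a\in P$ and that $u$ is minimal.

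Everything here is bookkeeping; the step I expect to demand the most care is this last gcd computation, specifically keeping the candidate gcds inside $\Cat{P,u,v}$ (away from the deleted arrow $[u,v]$) and handling the degenerate subcases where the gcd turns out to be the identity $[u,u]$ --- these are precisely the points where the spindle geometry of Proposition~\ref{P:SpindleChar} and the extremality of $[u,v]$ do the work. Once $\Cat{P,u,v}$ has been shown to be a gcd-category, Theorem~\ref{T:UmSgcdmon} immediately yields that its universal monoid $\HM{P,u,v}$ is a gcd-monoid.
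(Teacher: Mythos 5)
Your overall route is the paper's: show that $\Cat{P,u,v}$ is a gcd-category by combining Proposition~\ref{P:rGpPgcd} with the divisibility tables of Lemma~\ref{L:SpindleCat}, then invoke Theorem~\ref{T:UmSgcdmon}. Your explicit verifications of conicality and of cancellativity (the latter via the projection functor onto $\Cat{P}$ together with the minimality of~$u$ and the maximality of~$v$) are a useful supplement, since the paper leaves those checks implicit and the divisibility tables alone do not imply cancellativity.

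There is, however, one concrete error in the gcd computation, located exactly in the case you flag as delicate. When $a=[u,w_a]$ with $w_a\neq v$ and $b=Z\in\cC_{u,v}$, the common left divisors are the $[u,y]$ with $y\in Z$ and $y\leq w_a$; their top is \emph{not} $[u,w_a]$ --- that arrow need not left divide $Z$ at all, since $w_a$ need not lie on $Z$ --- but rather $[u,\max([u,w_a]\cap Z)]$, and the existence of that maximum is the one point of the whole argument that genuinely needs the hypotheses when $P$ is infinite. It can be repaired as follows: set $m=w_a\wedge v$, the meet taken in the \ms\ $\SUP{P}{u}$; then $m$ is an upper bound of $\setm{y\in Z}{y\leq w_a}$, and if that set is not $\set{u}$ it contains some $z_0\in\oo{u,v}$ with $z_0\leq m<v$, so the spindle condition makes $m$ comparable to every element of $Z\cap\oo{u,v}$, whence $m\in Z$ by maximality of the chain $Z$ and the maximum exists. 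With that fix your case analysis reproduces the closed-form gcd formulas the paper records, e.g.\ $[u,x]\gcd Z=[u,\max([u,x]\cap Z)]$ and $X\gcd Y=\set{u}$ for distinct $X,Y\in\cC_{u,v}$, and the rest of your proof goes through.
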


\begin{proof}
It follows from Proposition~\ref{P:rGpPgcd} that $\INF{P}{a}$ is a \js\ and $\SUP{P}{a}$ is a \ms, for every $a\in P$.
Then a direct application of Lemma~\ref{L:SpindleCat} yields that
$\Cat{P,u,v}$ is a gcd-category, where the left gcd (resp., right gcd), of a pair of elements with the same source (resp., target), are respectively given by
 \begin{align*}
 [x,y]\gcd[x,z]&=[x,y\wedge z]\,,
 &&\text{for all }[x,y],[x,z]\neq[u,v]\,,\\
 [u,x]\gcd Z&=[u,\max([u,x]\cap Z)]\,,
 &&\text{whenever }u\leq x\,,\ x\neq v\,,\text{ and }
 Z\in\cC_{u,v}\,,\\
 [x,z]\gcdt[y,z]&=[x\vee y,z]\,,
 &&\text{for all }[x,z],[y,z]\neq[u,v]\,,\\
 [x,v]\gcdt Z&=[\min([x,v]\cap Z),v]\,,
 &&\text{whenever }x\leq v\,,\ x\neq u\,,
 \text{ and }Z\in\cC_{u,v}\,,\\
 X\gcd Y&=X\gcdt Y=\set{u}\,,
 &&\text{for all }X\neq Y\text{ in }\cC_{u,v}\,.
 \end{align*}
By Theorem~\ref{T:UmSgcdmon}, it follows that $\HM{P,u,v}$ is a gcd-monoid.
\end{proof}

It is easy to construct examples showing that the assumption of Proposition~\ref{P:SpindleCat}, which is equivalent to saying that each $\INF{P}{a}$ is a \js\ and each $\SUP{P}{a}$ is a \ms, can be relaxed.
For example, defining~$P$ by its covering relations $0<p_i$ and $p_i<q_j$, whenever $i,j\in\set{0,1}$, with the extreme spindle $[0,q_0]$, it is easy to verify that $\Cat{P,0,q_0}$ is a gcd-category.
Nevertheless, $\SUP{P}{0}=P$ is not a \ms, because $\set{q_0,q_1}$ has no greatest lower bound.

\begin{proposition}\label{P:SpdMonPres}
Let~$P$ be a poset and let $[u,v]$ be an extreme spindle of~$P$.
Then the monoid $\HM{P,u,v}$ can be defined by the generators $[x,y]$, where $x<y$ in~$P$ with $(x,y)\neq(u,v)$, and the relations
 \begin{equation}\label{Eq:SpdEq}
 [x,z]=[x,y]\cdot[y,z]\,,\quad\text{whenever }x<y<z\text{ in }P
 \text{ with }(x,z)\neq(u,v)\,.
 \end{equation}
\end{proposition}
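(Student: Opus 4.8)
The plan is to identify $\HM{P,u,v}=\Um{\Cat{P,u,v}}$ with the monoid~$N$ given by the generators and relations in the statement, by exhibiting mutually inverse monoid homomorphisms $\Phi\colon\HM{P,u,v}\to N$ and $\Psi\colon N\to\HM{P,u,v}$. Throughout I write $\eps\colon\Cat{P,u,v}\to\HM{P,u,v}$ for the canonical functor ($\Cat{P,u,v}$ is a category by Lemma~\ref{L:SpindleCat}), and record a maximal chain $Z\in\cC_{u,v}$ as $Z=\set{u=t_0<t_1<\dots<t_n=v}$; since $[u,v]$ has height~$\geq2$, the two-element chain $\set{u,v}$ is never maximal, so $n\geq2$ and $Z\cap\oo{u,v}=\set{t_1,\dots,t_{n-1}}$ is nonempty.

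The technical heart of the argument is the following \emph{chain lemma}: in~$N$, the product $[u,z]\cdot[z,v]$ is independent of the choice of $z\in Z\cap\oo{u,v}$. It suffices to prove $[u,t_i]\cdot[t_i,v]=[u,t_{i+1}]\cdot[t_{i+1},v]$ for $1\leq i\leq n-2$. Applying~\eqref{Eq:SpdEq} to $u<t_i<t_{i+1}$ (legitimate since $t_{i+1}\leq t_{n-1}<v$, so $(u,t_{i+1})\neq(u,v)$) gives $[u,t_{i+1}]=[u,t_i]\cdot[t_i,t_{i+1}]$, and applying it to $t_i<t_{i+1}<v$ (legitimate since $t_i\geq t_1>u$, so $(t_i,v)\neq(u,v)$) gives $[t_i,v]=[t_i,t_{i+1}]\cdot[t_{i+1},v]$; multiplying, $[u,t_i]\cdot[t_i,v]=[u,t_i]\cdot[t_i,t_{i+1}]\cdot[t_{i+1},v]=[u,t_{i+1}]\cdot[t_{i+1},v]$.

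To build $\Phi$, I would fix for each $Z\in\cC_{u,v}$ some $z_Z\in Z\cap\oo{u,v}$ and define $f\colon\Cat{P,u,v}\to N$ by $f([x,x])=1$, $f([x,y])=[x,y]$ for $x<y$ with $(x,y)\neq(u,v)$, and $f(Z)=[u,z_Z]\cdot[z_Z,v]$. One checks against the multiplication table of Definition~\ref{D:SpdCat} that $f$ is a functor into the one-object category~$N$: the products~\eqref{Eq:XneutralMC} are handled by $f$ sending identities to~$1$, the products~\eqref{Eq:xyyzxzC} are exactly relation~\eqref{Eq:SpdEq} (or trivial when a factor is an identity), and the products~\eqref{Eq:uxxvCx} hold by the chain lemma. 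By Proposition~\ref{P:UmSEUniv}, $f$ factors as $f=\Phi\circ\eps$ for a unique monoid homomorphism $\Phi$. In the other direction, since $\eps$ is a functor the elements $\eps([x,y])$ of $\HM{P,u,v}$ (for $x<y$, $(x,y)\neq(u,v)$) satisfy~\eqref{Eq:SpdEq}, so there is a monoid homomorphism $\Psi\colon N\to\HM{P,u,v}$ with $\Psi([x,y])=\eps([x,y])$. Then $\Phi\Psi$ fixes each generator $[x,y]$ of~$N$; and $\Psi\Phi$ fixes each $\eps([x,y])$, while $\Psi\Phi(\eps(Z))=\Psi([u,z_Z]\cdot[z_Z,v])=\eps([u,z_Z]\cdot[z_Z,v])=\eps(Z)$ by~\eqref{Eq:uxxvCx}; since $\HM{P,u,v}$ is generated by the $\eps(s)$, $s\in\Cat{P,u,v}$, this forces $\Psi\Phi=\id$, so $\Phi$ is an isomorphism.

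The only genuinely non-routine point I expect is the chain lemma: the instances of~\eqref{Eq:uxxvCx} coming from maximal chains of length $>3$ must be shown redundant once~\eqref{Eq:SpdEq} is imposed. Everything else is bookkeeping with Definition~\ref{D:SpdCat} and the universal property of~$\Um{-}$.
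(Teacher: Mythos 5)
Your proof is correct and takes essentially the same route as the paper's: the paper verifies directly that $\HM{P,u,v}$ has the universal property of the presented monoid, and its two pillars are exactly yours --- that $\HM{P,u,v}$ is generated by the intervals $[x,y]\neq[u,v]$ (via $Z=[u,z]\cdot[z,v]$), and your ``chain lemma'' that the product $[u,z]\cdot[z,v]$ computed from the relations~\eqref{Eq:SpdEq} is independent of $z\in Z\setminus\set{u,v}$. The only blemish is that writing $Z=\set{u=t_0<t_1<\dots<t_n=v}$ and reducing to consecutive indices tacitly assumes the maximal chain is finite, which is not among the hypotheses; this is harmless, because your displayed computation uses only that $t_i<t_{i+1}$ are two comparable elements of $Z\cap\oo{u,v}$, so it applies verbatim to an arbitrary pair $x<y$ in $Z\cap\oo{u,v}$ (which is how the paper phrases it), and no stepping through adjacent elements is needed.
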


\begin{proof}
We verify that $M_0\eqdef\HM{P,u,v}$ satisfies the universal property defining the monoid presented by the relations~\eqref{Eq:SpdEq}.
Thus let~$M$ be a monoid, with elements~$a_{x,y}\in M$, for $(x,y)\neq[u,v]$, such that $a_{x,z}=a_{x,y}a_{y,z}$ whenever $x<y<z$ and $(x,z)\neq(u,v)$.
We need to prove that there is a unique monoid homomorphism $\gf\colon M_0\to M$ such that $\gf([x,y])=a_{x,y}$ whenever $x<y$ and $(x,y)\neq(u,v)$.

We first extend the function $(x,y)\mapsto a_{x,y}$ by setting $a_{x,x}=1$ whenever $x\in P$.

For the existence part, it suffices to prove that~$M_0$ is generated by the subset $\Cat{P}\setminus\set{[u,v]}$.
Let~$Z$ be a maximal chain of $[u,v]$.
Since $[u,v]\neq\set{u,v}$, there is $z\in Z$ such that $u<z<v$.
Hence, within $\Cat{P,u,v}$ (thus within~$M_0$), $Z=[u,z]\cdot[z,v]$, as required.
This completes the proof of the uniqueness part.

Let us deal with existence now.
We claim that for any maximal chain~$Z$ of~$[u,v]$, the element $a_{u,z}a_{z,v}$, where $z\in Z\setminus\set{u,v}$, is independent of~$z$.
Indeed, any $x,y\in Z\setminus\set{u,v}$ are comparable, say $x\leq y$, and then
$a_{u,x}a_{x,v}=a_{u,x}a_{x,y}a_{y,v}=a_{u,y}a_{y,v}$,
thus proving our claim.
Denote by~$b_Z$ the common value of all $a_{u,z}a_{z,v}$, where $z\in Z\setminus\set{u,v}$.
We need to prove that the~$a_{x,y}$, where $[x,y]\neq[u,v]$, and the~$b_Z$, where $Z\in\cC_{u,v}$, satisfy the relations defining the monoid~$M_0$, which are also the relations \eqref{Eq:XneutralMC}--\eqref{Eq:uxxvCx} defining the category $\Cat{P,u,v}$.
The only non-trivial instances to be verified are $a_{x,z}=a_{x,y}a_{y,z}$ whenever $x\leq y\leq z$ and $(x,z)\neq(u,v)$, and $b_Z=a_{u,z}a_{z,v}$ whenever $Z\in\cC_{u,v}$ and $z\in Z\setminus\set{u,v}$, which all hold by construction.
\end{proof}

\section{A criterion of group-embeddability for universal monoids}
\label{S:Pos2UMG}

For any category~$S$, the universal property defining~$\Um{S}$ implies immediately that there is a unique monoid homomorphism $\gf_S\colon\Um{S}\to\Ug{S}$ such that $\gf_S\circ\eps_S=\eta_S$.
In fact, $\gf_S=\eta_{\Um{S}}$.
Since there are monoids (even cancellative ones) that cannot be embedded into groups (see Meakin~\cite{Meak2007} for a survey), $\gf_S$ may not be an embedding.
However, the result below states a convenient
criterion for this to occur.
It states that for any category~$S$, the embeddability of~$\Um{S}$ into a group can be verified ``locally'', that is, on the hom-sets.
The main trick used in the proof of Theorem~\ref{T:EmbUmS2Grp} will be called the \emph{highlighting expansion} of a morphism.

\begin{theorem}\label{T:EmbUmS2Grp}
The following are equivalent, for any category~$S$:
\begin{enumeratei}
\item\label{UmSintogrp1}
The canonical map~$\gf_S\colon\Um{S}\to\Ug{S}$ is one-to-one.

\item\label{UmSintogrp2}
$\Um{S}$ embeds into a group.

\item\label{IdSepEmb}
There are a group~$G$ and a functor $\psi\colon S\to G$ such that the restriction of~$\psi$ to every hom-set of~$S$ is one-to-one.
\end{enumeratei}
\end{theorem}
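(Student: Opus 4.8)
The plan is to prove the cyclic implications $\eqref{UmSintogrp1}\Rightarrow\eqref{UmSintogrp2}\Rightarrow\eqref{IdSepEmb}\Rightarrow\eqref{UmSintogrp1}$. The first implication is trivial: $\Ug{S}$ is a group, so if $\gf_S$ is one-to-one then $\Um{S}$ embeds into it. The implication $\eqref{UmSintogrp2}\Rightarrow\eqref{IdSepEmb}$ is almost as easy: if $j\colon\Um{S}\hookrightarrow G$ is a group embedding, then $\psi\eqdef j\circ\eps_S\colon S\to G$ is a group-valued functor, and by Lemma~\ref{L:KerofepsS} the restriction of $\eps_S$ to each hom-set of $S$ is one-to-one, hence so is the restriction of $\psi$; alternatively one can simply quote that $\psi=j\circ\eps_S$ separates $S(a,b)$ because $\eps_S$ does.

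The real content is $\eqref{IdSepEmb}\Rightarrow\eqref{UmSintogrp1}$. Here I would use what the excerpt calls the \emph{highlighting expansion}. The idea: given a separating functor $\psi\colon S\to G$, I want to factor (a variant of) $\eta_S$ through an embedding. The obstruction to $\gf_S$ being injective is that distinct reduced sequences $\bx=(x_1,\dots,x_m)$ and $\by=(y_1,\dots,y_n)$ in $\Seqr S$ might satisfy $\eta_S(x_1)\cdots\eta_S(x_m)=\eta_S(y_1)\cdots\eta_S(y_n)$ in $\Ug{S}$. To rule this out, I would build an auxiliary category $\widehat{S}$ — the highlighting expansion — whose arrows carry enough extra bookkeeping (a ``highlighted'' marker recording the factorization boundaries, together with the $\psi$-value) so that the universal group of $\widehat{S}$ cannot collapse two genuinely different reduced words, and so that $\Um{S}$ sits inside $\Um{\widehat{S}}$ or $\Ug{\widehat{S}}$ faithfully. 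Concretely: adjoin to each object a ``copy'' of $G$ acting by translation, forming something like $S\times_{\text{obj}}G$ or the comma-type construction where arrows are pairs $(x,g)$ with a composition twisted by $\psi$; the separation hypothesis guarantees this is well-defined and that the projection back to $S$ is faithful on hom-sets. Then $\Ug{\widehat{S}}$ receives a map from $\Ug{S}$, and one reads off from the reduced-word description of $\Um{\widehat{S}}$ (via Lemma~\ref{L:x2Ered} and Remark~\ref{Rk:Cat2SE}) that two reduced sequences mapping to the same element of $\Ug{\widehat{S}}$ must already be equal, because the highlighting records exactly where the factor boundaries lie.

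The main obstacle I anticipate is making the highlighting expansion precise and verifying it really is a category (associativity of the twisted composition, existence of the right identities) and that the canonical functor $\widehat{S}\to S$ is a faithful — indeed hom-set-bijective after the right quotient — functor whose induced map on universal monoids detects the failure of injectivity of $\gf_S$. In other words, the bookkeeping has to be rich enough to separate reduced words but not so rich that $\Um{\widehat{S}}$ stops relating to $\Um{S}$. Once the construction is in place, the final step — deducing that $\gf_S$ is injective from the fact that reduced words in $\widehat{S}$ are separated by $\eta_{\widehat S}$, using the unique-reduced-form results of Section~\ref{S:UnivMon} and the adjunction properties of $\Umn$ and $\Ugn$ from Propositions~\ref{P:UmSEUniv} and~\ref{P:UgSEUniv} — should be a routine diagram chase.
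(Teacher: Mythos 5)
Your two easy implications are correct and match the paper. The problem is \eqref{IdSepEmb}$\Rightarrow$\eqref{UmSintogrp1}: what you offer there is a plan, not a proof, and the plan points in the wrong direction. You propose to build an auxiliary \emph{category} $\widehat{S}$ of pairs $(x,g)$ with a $\psi$-twisted composition and then argue about $\Um{\widehat S}$ and $\Ug{\widehat S}$. But such a construction merely relabels the hom-sets of~$S$ (each hom-set of $\widehat S$ is a copy of one of~$S$), so you would face exactly the same question for $\widehat S$ that you started with for~$S$; nothing in your sketch explains why $\Ug{\widehat S}$ would separate reduced words, and you yourself flag the verification of all the needed properties as the ``main obstacle'' without resolving it.

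The idea you are missing is that the \emph{highlighting expansion} is not a new category but a new group-valued functor: from the separating functor $\psi\colon S\to G$ one forms $\psi'\colon S\to\Fg{\Idt S}\ast G$, $x\mapsto(\sr{x})^{-1}\,\psi(x)\,(\tg{x})$, treating the identities of~$S$ as free generators of a free group and taking the free product with~$G$. A first application of this construction lets one assume that $\psi(x)=1$ forces $x\in\Idt S$; a second application sends a reduced word $x_1\cdots x_m$ to $a_1^{-1}\psi(x_1)b_1\,a_2^{-1}\psi(x_2)b_2\cdots a_m^{-1}\psi(x_m)b_m$ with $(a_i,b_i)=(\sr{x_i},\tg{x_i})$, and since $b_i\neq a_{i+1}$ (the products $x_ix_{i+1}$ are undefined) and each $\psi(x_i)\neq1$, this is a genuine alternating normal form in the free product. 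The uniqueness of normal forms in free products then recovers $m$, the endpoints $(a_i,b_i)$, and the values $\psi(x_i)$, and the separation hypothesis on hom-sets recovers each~$x_i$; hence $\gf_S$ is one-to-one. Without the passage to the free product $\Fg{\Idt S}\ast G$ and its normal form theorem, the argument does not close, so as it stands your proof of the essential implication is incomplete.
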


\begin{proof}
\eqref{UmSintogrp1}$\Rightarrow$\eqref{UmSintogrp2} is trivial, while \eqref{UmSintogrp2}$\Rightarrow$\eqref{IdSepEmb} follows immediately from Lemma~\ref{L:KerofepsS}.

Finally let us prove \eqref{IdSepEmb}$\Rightarrow$\eqref{UmSintogrp1}.
Let~$G$ and~$\psi$ be as prescribed in~\eqref{IdSepEmb}.
We need to prove that~$\gf_S$ is one-to-one.
Our proof will be an amplification of the one of Proposition~\ref{P:rGPintoGrp}.

The map $ \psi'\colon S\to\Fg{\Idt S}*G$,
$x\mapsto(\sr{x})^{-1}\psi(x)\tg{x}$ is a functor.
It works by adding to~$\psi(x)$ the endpoints information on~$x$, thus we will call it the \emph{highlighting expansion} of~$\psi$.

\begin{sclaim}
The restriction of~$\psi'$ to every hom-set of~$S$ is one-to-one.
Furthermore,
$\psi'(x)=1$ implies that $x\in\Idt S$, for any $x\in S$.
\end{sclaim}

\begin{scproof}
Let $a,b\in\Idt S$ and let $x,y\in S(a,b)$ such that $\psi'(x)=\psi'(y)$.
This means that $a^{-1}\psi(x)b=a^{-1}\psi(y)b$ within $\Fg{\Idt S}*G$, that is, $\psi(x)=\psi(y)$, so, by assumption, $x=y$.

Now let $x\in S$ such that $\psi'(x)=1$.
Setting $a=\sr{x}$ and $b=\tg{x}$, this means that
$a^{-1}\psi(x)b=1$ within $\Fg{\Idt S}*G$, so, by the uniqueness of the normal form for elements of that group (see, for example, Rotman \cite[Theorem~11.52]{Rotm1995}), $a=b$ and $\psi(x)=1$, and so $\psi(x)=\psi(a)$, where~$x$ and~$a$ both belong to $S(a,a)$.
By our assumption, it follows that $x=a$.
\end{scproof}

By the Claim above, we may replace~$(G,\psi)$ by its highlighting expansion\newline $\pI{\Fg{\Idt S}*G,\psi'}$ and thus assume from the start that
 \begin{equation}\label{Eq:psisep1}
 \psi(x)=1\text{ implies that }x\in\Idt S\,,\quad
 \text{for each }x\in S\,.
 \end{equation}
Forming again the highlighting expansion~$\psi'$ of that new map~$\psi$, it follows from the universal property defining~$\Ug{S}$ that there is a unique group homomorphism $\gs\colon\Ug{S}\to\Fg{\Idt S}*G$ such that $\psi'=\gs\circ\eta_S$.

Now let $\bx,\by\in\Um{S}$ such that $\gf_S(\bx)=\gf_S(\by)$.
Writing $\red{\bx}=(x_1,\dots,x_m)$ and $\red{\by}=(y_1,\dots,y_n)$, this can be written
 \[
 \eta_S(x_1)\cdots\eta_S(x_m)=
 \eta_S(y_1)\cdots\eta_S(y_n)
 \quad\text{within }G\,.
 \]
By applying the homomorphism~$\gs$, we get
 \[
 \psi'(x_1)\cdots\psi'(x_m)=
 \psi'(y_1)\cdots\psi'(y_n)\quad
 \text{within }\Fg{\Idt S}*G\,,
 \]
which, writing $(a_i,b_i)=(\sr{x_i},\tg{x_i})$ and $(c_j,d_j)=(\sr{y_j},\tg{y_j})$, means that
 \begin{multline*}
 a_1^{-1}\psi(x_1)b_1a_2^{-1}\psi(x_2)b_2
 \cdots a_m^{-1}\psi(x_m)b_m=
 c_1^{-1}\psi(y_1)d_1c_2^{-1}\psi(y_2)d_2
 \cdots c_n^{-1}\psi(y_n)d_n\\
 \text{within }\Fg{\Idt S}*G\,.
 \end{multline*}
Since each $b_i\neq a_{i+1}$ (because $x_ix_{i+1}\uparrow$) and $c_j\neq d_{j+1}$ (because $y_jy_{j+1}\uparrow$), and since, by~\eqref{Eq:psisep1}, each~$\psi(x_i)$ and each~$\psi(y_j)$ belongs to~$G\setminus\set{1}$, it follows from the uniqueness of the normal form for elements of~$\Fg{\Idt S}*G$ that $m=n$, each $(a_i,b_i)=(c_i,d_i)$, and each $\psi(x_i)=\psi(y_i)$.
Since~$x_i$ and~$y_i$ both belong to~$S(a_i,b_i)$, it follows from our assumption that $x_i=y_i$.
Therefore, $\bx=\by$.
\end{proof}

By defining~$\psi$ as the constant map with value~$1$, we get immediately part of the result of  Proposition~\ref{P:rGPintoGrp} --- namely, that the interval monoid of a poset always embeds into a group.
As the following example shows, the range of application of Theorem~\ref{T:EmbUmS2Grp} goes beyond universal monoids of ordered sets.

\begin{example}\label{Ex:EmbGrp1}
Denote by~$C_6$ the monoid defined by the generators~$a$, $b$, $c$, $a'$, $b'$, $c'$ and the relations
 \begin{equation}\label{Eq:PresC6}
 ab'=ba'\,,\quad bc'=cb'\,,\quad ac'=ca'\,.
 \end{equation}
We prove, with the help of Theorem~\ref{T:EmbUmS2Grp}, that~$C_6$ can be embedded into a group.

We consider distinct symbols~$0$, $1$, $2$, $a$, $b$, $c$, $a'$, $b'$, $c'$, $\ol{a}$, $\ol{b}$, $\ol{c}$ and we define the category~$S$, with objects~$0$, $1$, $2$ and nonempty hom-sets defined by $S(i,i)=\set{i}$ whenever $i\in\set{0,1,2}$, $S(0,1)=\set{a,b,c}$,
$S(1,2)=\set{a',b',c'}$, $S(0,2)=\set{aa',bb',cc',\ol{a},\ol{b},\ol{c}}$, and composition defined by
 \[
 ab'=ba'=\ol{c}\,,\quad bc'=cb'=\ol{a}\,,\quad
 ac'=ca'=\ol{b}\,.
 \]
By definition, $C_6=\Um{S}$.
It is straightforward to verify that~$S$ is a gcd-category.
Hence, by Theorem~\ref{T:UmSgcdmon}, $C_6$ is a gcd-monoid.

Denote by $\psi\colon S\to\ZZ^3$ the unique functor such that $\psi(a)=\psi(a')=(1,0,0)$, $\psi(b)=\psi(b')=(0,1,0)$, $\psi(c)=\psi(c')=(0,0,1)$.
It is straightforward to verify that~$\psi$ is one-to-one on every hom-set of~$S$.
For example, the elements~$\psi(aa')$, $\psi(bb')$, $\psi(cc')$, $\psi(\ol{a})$, $\psi(\ol{b})$, $\psi(\ol{c})$ are all distinct.
Since $S(i,i)$ is a singleton for each $i\in\set{0,1,2}$, it follows from Theorem~\ref{T:EmbUmS2Grp} that~$C_6$ embeds into a group.

It is interesting to analyze~$C_6$ \emph{via} the methods of Dehornoy \cite[Section~II.4]{DDGKM}.
The presentation~\eqref{Eq:PresC6} is both left and right complemented, and it satisfies the left and right cube conditions, which yields another proof that~$C_6$ is a gcd-monoid.
On the other hand, the right $3$-Ore condition fails in~$C_6$, for~$a$, $b$, $c$ pairwise admit common 
right-multiples, but they admit no global common right-multiple.
Hence the methods of \cite[Section~II.4]{DDGKM} are \emph{a priori} not sufficient to infer the result, established above, that~$C_6$ embeds into its universal group.

Then comes one more surprise.
Eliminating, in that order, the variables~$c'$ and~$b'$ from~\eqref{Eq:PresC6} yields $c'=b^{-1}cb'$, then $b'=c^{-1}ba^{-1}ca'$, and then $ac^{-1}b=bc^{-1}a$.
Therefore, the universal group~$G_6$ of~$C_6$ can be defined by generators~$a$, $b$, $c$, $a'$ and the unique relation $ac^{-1}b=bc^{-1}a$.
Changing~$c$ to~$c^{-1}$, we obtain the alternate presentation of~$G_6$, with generators~$a$, $b$, $c$, $a'$, and relation $acb=bca$.
Hence, $G_6$ is also the universal group of the monoid~$D_4$ defined by generators~$a$, $b$, $c$, $a'$, and relation $acb=bca$.
By applying to that presentation the methods of Dehornoy \cite[Section~II.4]{DDGKM}, it can be verified that~$D_4$ is a n\oe therian gcd-monoid \emph{satisfying both left and right $3$-Ore conditions}.
By applying Dehornoy \cite[Section~4]{Dis}, this gives another proof that~$D_4$ embeds into its universal group.
However, the embeddability of~$D_4$ into its universal group would not be sufficient, \emph{a priori}, to infer the above result that~$C_6$ embeds into its universal group, because the change of presentation described above involves changing~$c$ to~$c^{-1}$, which requires an ambient group.
\end{example}


\providecommand{\noopsort}[1]{}\def\cprime{$'$}
  \def\polhk#1{\setbox0=\hbox{#1}{\ooalign{\hidewidth
  \lower1.5ex\hbox{`}\hidewidth\crcr\unhbox0}}}
\providecommand{\bysame}{\leavevmode\hbox to3em{\hrulefill}\thinspace}
\providecommand{\MR}{\relax\ifhmode\unskip\space\fi MR }
\providecommand{\MRhref}[2]{%
  \href{http://www.ams.org/mathscinet-getitem?mr=#1}{#2}
}
\providecommand{\href}[2]{#2}

\end{document}